\begin{document}
\def\Diff{\text{Diff}}
\def\Max{\text{max}}
\def\P{\mathbb P}
\def\R{\mathbb R}
\def\M{\mathbb M}
\def\N{\mathbb N}
\def\Z{\mathbb Z}
\def\a{{\underline a}}
\def\b{{\underline b}}
\def\c{{\underline c}}
\def\Log{\text{log}}
\def\loc{\text{loc}}
\def\inta{\text{int }}
\def\det{\text{det}}
\def\exp{\text{exp}}
\def\Re{\text{Re}}
\def\lip{\text{Lip}}
\def\leb{\text{Leb}}
\def\dom{\text{Dom}}
\def\diam{\text{diam}\:}
\def é{\'e}
\def\supp{\text{supp}\:}
\newcommand{\ovfork}{{\overline{\pitchfork}}}
\newcommand{\ovforki}{{\overline{\pitchfork}_{I}}}
\newcommand{\Tfork}{{\cap\!\!\!\!^\mathrm{T}}}
\newcommand{\whforki}{{\widehat{\pitchfork}_{I}}}
\newcommand{\marginal}[1]{\marginpar{{\scriptsize {#1}}}}
\def\st{{\mathfrak t}}
\def\sT{{\mathfrak T}}
\def\sR{{\mathfrak R}}
\def\sM{{\mathfrak M}}
\def\sA{{\mathfrak A}}
\def\sB{{\mathfrak B}}
\def\sY{{\mathfrak Y}}
\def\sE{{\mathfrak E}}
\def\sP{{\mathfrak P}}
\def\sG{{\mathfrak G}}
\def\sa{{\mathfrak a}}
\def\sw{{\mathfrak w}}
\def\se{{\mathfrak e}}
\def\sb{{\mathfrak b}}
\def\sc{{\mathfrak c}}
\def\sg{{\mathfrak g}}
\def\sd{{\mathfrak d}}

\theoremstyle{plain}
\newtheorem{theo}{\bf Theorem}[section]
\newtheorem{lemm}[theo]{\bf Lemma}
\newtheorem{ques}[theo]{\bf Question}
\newtheorem{sublemm}[theo]{\bf Sublemma}
\newtheorem{IH}[theo]{\bf Extra induction hypothesis}
\newtheorem{prop}[theo]{\bf Proposition}
\newtheorem{coro}[theo]{\bf Corollary}
\newtheorem{Property}[theo]{\bf Property}
\newtheorem{Claim}[theo]{\bf Claim}
\theoremstyle{remark}
\newtheorem{rema}[theo]{\bf Remark}
\newtheorem{important rema}[theo]{\bf Important remark}
\newtheorem{remas}[theo]{\bf Remarks}
\newtheorem{fact}[theo]{\bf Fact}
\newtheorem{partial answer}[theo]{\bf Partial answer}
\newtheorem{exem}[theo]{\bf Example}
\newtheorem{Examples}[theo]{\bf Examples}
\newtheorem{defi}[theo]{\bf Definition}


\title{Normal forms and Misiurewicz renormalization for dissipative surface diffeomorphisms}

\author{Pierre Berger\footnote{CNRS-Université Paris 13, Sorbonne Paris Cité, LAGA, partially financed by the Balzan project of J. Palis and the Brazilian-French Network in Mathematics.}}

\date{\today}
\maketitle
\abstract{We define a hyperbolic renormalizations suitable for maps of small determinant, with uniform bounds for large periods. The techniques involve an improvement of the celebrated Palis-Takens renormalization and normal forms (fibered linearizations). These techniques are useful to study the dynamics of Hénon like maps and the geometry of their parameter space.}

\section*{Introduction}

A key event in chaos theory was the discovery by Lorenz \cite{Lo63}  that an ordinary differential equation modeling a convection flow had most of its orbits which are unstable and non-periodic. This was latter simplified to a mathematical paradigm  by Hénon \cite{He}, as the dynamics given by the iteration of the following diffeomorphism:
\[H\colon (x,y)\mapsto (1-1.4x^2+y,0.3x).\]
He conjectured that this map has a strange attractor.
This conjecture remains open, despite intensive works on the Hénon family
 \[h_{ab}\colon (x,y)\mapsto (x^2+y+a,-bx),\quad a,b\in \R\]
 which is a family of maps of $\R^2$ containing a conjugate of $H$. For instance a celebrated theorem of Benedicks-Carleson \cite{BC2} shows that for a Lebesgue positive set of parameters $(a,b)$ the map $f_{ab}$ is non-uniformly hyperbolic (see \cite{berhen} for a modern proof).

Also $C^r$-perturbations of Hénon maps appear naturally as a renormalized dynamics at every non-degenerate unfolding of homoclinic tangency of a dissipative hyperbolic periodic point, from  Palis-Takens Theorem (\cite{PT93}, \textsection III.4), that we will give in a revised form in Theorem \ref{PT2}. The perturbations of Hénon maps are in particular \emph{Hénon Like}. For this section, let us just mention that 
 Hénon like maps are dynamics which are close to the dynamics of a map $(x,y)\mapsto (P(x)+y,0)$, 
where $P$ is a unimodal map. The latter map has the same dynamics as $P$ restricted to the horizontal line. 

In this work we give some tools to understand the dynamics of Hénon maps, especially those with determinant small but reasonable enough to seem to include the map $H$ (from numerical evidence, see \textsection \ref{app3}). These tools are (pre)renormalization techniques and  normal forms for Hénon like maps.

The renormalization techniques generalize some renormalizations of unimodal maps.

In order to state several application and tools, let us recall and introduce a few definitions.

\begin{defi}
 A \emph{unimodal map} is a $C^2$-map $P$ of $\R$ which has a unique critical point at $0$ and whose second derivative $D^2P(0)$ is non zero, and so that $\infty$ is a super attracting fixed point of $P$:
\[P(x)\to \infty \quad\text{and}\quad D\left(\frac{1}{P(1/x)}\right)=\frac{x^2 DP(x)}{( P(x))^2}\to 0,\quad \text{as }x\to \infty.\]
The maximal invariant compact set of $P$ is either empty or either a segment bounded by a non-attracting fixed point and its preimage. 

The unimodal map is \emph{normalized} if $D^2P(0)=2$. It is always possible to normalize a unimodal map, via a linear conjugacy.
\end{defi}
 
For instance any quadratic map $x\mapsto x^2+a$ is unimodal and normalized.

\begin{defi} A \emph{Hénon $C^r$-like map} is any $C^r$-perturbation of a map of the form $(x,y)\mapsto (P(x)+y,0)$, where $P$ is a normalized unimodal map. 
\end{defi}
For instance any Hénon map $(x,y)\mapsto (x^2+a+y, -bx)$ is Hénon $C^r$-like (for every $r$ when $b$ small).

\begin{defi} 
A unimodal map $P$ is \emph{renormalizable} if there exists an open interval $J\subset I$  sent into itself by an iterate $P^N$ of the dynamics such that $P^N|J$ extends to a unimodal map of $\R$ with non-empty maximal invariant compact set included in $J$. The interval $J$ is a \emph{renormalization interval}.  
The \emph{renormalization} consists of conjugating $P^N|J$ to a normalized form. 
\end{defi}

This definition can be generalized to Hénon maps.
 \begin{defi} 
 A Hénon like map $f$ is \emph{renormalizable} if there exists an open interval subset $D\subset \R^2$  sent into itself by an iterate $f^N$ of the dynamics such that there exist
a Hénon like map $\mathcal R f$ and an embedding $\phi\in C^r(cl(D),\mathbb R^2)$ for which the following diagram commutes:
\begin{displaymath}
  \xymatrix{
    D\ar[d]_{\phi}\ar[r]^{ f^N} &          D\ar[d]_{\phi}\\
    \R^2 \ar[r]_{\mathcal Rf} &        \R^2 \\
  }
\end{displaymath}
and $\mathcal Rf$ has its (nonempty) maximal invariant compact set  contained in $\phi(cl(D))$. The integer $N$ is the \emph{period of the renormalization}, $D$ is the \emph{renormalization domain}. 
\end{defi}

For every renormalization interval $I_{0}$ of a quadratic map $P_{a_0}=x^2+a_0$, there exists a (maximal) parameter interval $\mathcal I$ such that for every $a\in \mathcal I$, the quadratic map $P_{a}$ has a renormalization interval $I_{a}$, depending continuously with $a$ and so that 
$I_{0}=I_{a_0}$.

For every renormalization parameter closed interval $\mathcal I$ of the quadratic family $P_a\colon x\mapsto x^2+a$, we can also associate a parameter domain of the Hénon family.

Indeed, from\footnote{Their definition of Hénon like maps is similar but not equal to the one here, in particular, they deal with analytic map. Our renormalization algorithm applies to such settings.} \cite{Ha11}, \cite{CLM}, there exists a connected parameter domain $\hat {\mathcal I}$ such that for every $(a,b)\in \hat {\mathcal I}$, the Hénon map $h_{ab}$ has a renormalization domain $D_r(a,b)$, depending continuously on $a$ and so that ${D_{r}}(a,0)\cap \R\times\{0\}=I\times\{0\}$. 

For every $b_0$ small, we can look a maximal parameter connected domain $\hat {\mathcal I}_{b_0}$ included in the strip $\R\times [-b_0,b_0]$.

\begin{ques}\label{quesstrip}
What is the geometry of $\hat {\mathcal I}_{b_0}$? Is there a unique maximal parameter connected domain $\hat {\mathcal I}_{b_0}$ containing $\mathcal I$? Does there is $b_0>0$ small
so that for all renormalization parameter intervals $\mathcal I$ of the
quadratic family, the sets $\hat {\mathcal I}_{b_0}$ are closed?
\end{ques}
Let $P_a(x)=x^2+a$. For $C>0$ and $\Lambda>1$, let $E_{C\lambda}$ be the subset of renormalization parameter intervals $\mathcal I$ such that for every $a\in \mathcal I$, if $I_a$ is the renormalization interval of $P_a$ and $p$ its period, it holds that the orbit of $I_a$ intersects $[-C,C]$ only at $I_a$ and:
\begin{equation}\tag{$\mathcal M^1$} \forall k\ge 0,\; \forall x\in \bigcap_{i=0}^{k-1} P^{-k}([-2,-C]\sqcup [C,2]), \quad  |D(P^k)(x)|>C \Lambda^k.\end{equation}

The techniques of renormalization of this work combined to those of \cite{Ha11} proves in \textsection \ref{app2} the following partial answer to Question \ref{quesstrip}.
\begin{theo}\label{partialans}
For every  $C>0$ and $\Lambda>1$, there exists $b_0>0$ so that for every $\mathcal I\in E_{C\lambda}$, the domain $\hat {\mathcal I}_{b_0}$ is a closed strip which stretches across $\R\times [-b_0,b_0]$.\end{theo}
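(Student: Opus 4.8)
The plan is to combine the one-dimensional structure forced by the Misiurewicz condition $(\mathcal M^1)$ with the fibered linearization (normal form) and the improved Palis--Takens renormalization of Theorem~\ref{PT2}, and then feed the resulting \emph{period-independent} estimates into a continuation argument in the parameter $b$. First I would record the one-dimensional picture for $\mathcal I\in E_{C\Lambda}$. Along $\mathcal I$ the combinatorics, and hence the period $p$, are constant; by hypothesis the orbit of $I_a$ meets $[-C,C]$ only at $I_a$, so along the remaining $p-1$ legs the trajectory lives in $[-2,-C]\sqcup[C,2]$, where $(\mathcal M^1)$ forces the uniform expansion $|D(P^k)|>C\Lambda^k$. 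Writing $P_a^{p}|I_a=g\circ P_a$ with $g$ the composition of the hyperbolic legs, the distortion of $g$ is then bounded by a geometric series in $\Lambda^{-1}$, hence by a constant depending only on $C$ and $\Lambda$ and \emph{not} on $p$. This is what makes $\mathcal R P_a$ a normalized unimodal map with estimates independent of the period.

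Second I would promote this to the Hénon family. For small $b$ the map $h_{ab}$ is Hénon $C^r$-like, close to $(x,y)\mapsto(P_a(x)+y,0)$. Along the hyperbolic legs the saddle-type behaviour lets me apply the fibered linearization to straighten the strong-stable direction, so that the transition through the $p-1$ legs becomes, in adapted coordinates, a uniformly hyperbolic map whose $x$-component reproduces the one-dimensional expansion and whose $y$-fibre is contracted by a factor controlled by the total Jacobian of order $b^{p}$. Composing this with the single passage near the critical region, handled by Theorem~\ref{PT2}, the return map $h_{ab}^{p}$ on the renormalization domain is $C^r$-close to a normalized Hénon-like map, with all bounds governed by $C,\Lambda,r$ alone. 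Fixing $b_0>0$ small enough that these estimates keep $\mathcal R h_{ab}$ inside the domain of definition of a renormalizable Hénon-like map for all $|b|\le b_0$ yields exactly the period-independent constant asserted in the statement.

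Third I would deduce the geometry of $\hat{\mathcal I}_{b_0}$. By \cite{Ha11},\cite{CLM} the renormalizable parameters with the combinatorics of $\mathcal I$ contain a continuous family $D_r(a,b)$ over $\hat{\mathcal I}$; restricting to $\R\times[-b_0,b_0]$ and using the uniform estimates above I would continue in $b$, starting from the interval $\mathcal I\times\{0\}$. For each fixed $b$ the renormalizability of $h_{ab}$ is an interval condition in $a$, whose two endpoints correspond to the boundary (saddle-node type) configurations of the renormalization interval; these are nondegenerate precisely because the orbit of $I_a$ meets $[-C,C]$ only at $I_a$, so no collapse of the domain can occur. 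Hence the slice over each $b\in[-b_0,b_0]$ stays nonempty and varies continuously, producing a connected component that touches both edges $\R\times\{\pm b_0\}$, i.e.\ a strip stretching across. Closedness then follows because the defining conditions, the expansion $(\mathcal M^1)$ and the inclusion of the maximal invariant set in $\phi(cl(D))$, are preserved under limits, the uniform hyperbolicity ruling out any degeneration at the boundary.

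The main obstacle is the uniformity of $b_0$ in the period $p$, carried out in the second step. A naive propagation of error estimates through $p$ legs would force $b_0\to 0$ as $p\to\infty$; the point is that the fibered linearization must be built with bounds that do not deteriorate with the number of hyperbolic legs. This is exactly where $(\mathcal M^1)$ enters: the uniform expansion $\Lambda>1$ makes the linearization series converge at a rate independent of $p$, so that both the accumulated distortion and the departure of $h_{ab}^{p}$ from a genuine Hénon-like map remain controlled by $C,\Lambda,r$ alone.
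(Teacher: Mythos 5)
Your proposal follows essentially the same route as the paper's proof: the Misiurewicz condition $(\mathcal M^1)$ gives a hyperbolic system of charts along the legs of the renormalization interval, the fibered linearization (Propositions \ref{foliation}, \ref{semilinearization}, \ref{linearization}) supplies the period-independent bounds you correctly identify as the crux, and the fibered Palis--Takens renormalization (Theorem \ref{Misurenorm}, the chain version of the Theorem \ref{PT2} you invoke) combined with continuation in $b$ yields the strip $\hat{\mathcal I}_{b_0}=\cup_b \mathcal I_b\times\{b\}$. The only real difference is that the paper first disposes of the finitely many intervals of bounded period by citing \cite{Ha11} and runs the uniform machinery only for large periods, where the renormalization error actually becomes small, whereas you treat all periods at once; this is a minor repair, since for bounded periods uniformity of $b_0$ follows from finiteness rather than from the linearization estimates.
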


From a result of Ma\~né \cite{Ma85}, for $r\ge 3$, Condition ($\mathcal M$) holds if there is no parabolic cycle (away of the renormalization interval) and if the orbit of the renormalization interval $(P_a^k(I_a))_{k=1}^{p-1}$ is distant to $0$. A natural next step would be to study the so-called \emph{parabolic renormalization} for Hénon-like map (for the study in the complex analytic setting see \cite{BSU12}).

The main novelties of these notes are the normal forms explained in \textsection \ref{sectapp2}. Together with the renormalizations (including linearizations) defined in \textsection \ref{sectapp3}, it is crucial to prove the following results:
\paragraph{Positive answer to a question of Lyubich \cite{zoology}}
  There is a parameter $(a,b)\in \R^2$ such that the Hénon maps $h_{ab}$ has two attracting cycles which attract Lebesgue almost every point which does not escape to infinity.
\paragraph{Proof of a numerical observation of Milnor \cite{zoology}, \cite{Mil92}} 
The parameter space of the composition $P$ of the two quadratic maps $x^2+c_1$ and $x^2+c_2$, with variable $(c_1,c_2)$ is similar to some regions of the parameter space of the Hénon family $(h_{ab})_{ab}$.
\paragraph{Lower bound to the Hausdorff dimension of Newhouse phenomena \cite{BedeSi}} Given a one dimensional  family of surface diffeomorphisms exhibiting a generic homoclinic unfolding, the Hausdorff dimension of the set of parameters for which the diffeomorphism has infinitely many sinks or sources is at least 1/2.

Also this work might be useful to work on the aforementioned Hénon conjecture.

\paragraph{Progress with respect to previous works on linearization}
The developed normal form techniques give sufficient conditions in order to prove that if $f\in C^r(\mathbb R^2,\mathbb R^2)$ and  if $(x_i)_{i}$ is an orbit in $\R^2$ of $f$, then there exist $C^{s}$-coordinates of a large neighborhood $W_i$ of each $x_i$ so that in these coordinates $f$ is linear. The assumption that we will use is hyperbolicity.

 If $(x_i)_{i}$ is a single fixed point $x$, sufficient conditions were first given by Poincaré in the analytic case, and then Sternberg in the smooth case. By denoting by $\lambda$ and $\sigma$ the eigenvalues of $D_xf$, the \emph{Sternberg condition of order $m$} is
 \begin{equation}\tag{$\mathcal S_m$}\lambda \not=  \lambda^i \sigma^j, \quad \text{and}\quad \sigma \not= \lambda^i\sigma^j,\quad \forall i,j\in \N,\; 2\le i+j\le m.\end{equation}
Sternberg \cite{Sternberg57, Sternberg58} showed that there is a function $V(s,r,\lambda, \sigma)$ such that if Sternberg condition is satisfied up to order $m$, where $m\ge  V$, then there is a $C^s$-linearization of $f$ at $x$, that is a diffeomorphism $\phi$ of a neighborhood of $x$ onto a neighborhood of $0$ so that $f \circ \phi = \phi\circ D_xf$.  

To quote Sell \cite{Sell85}, ``While there are several alternate proofs of Sternberg's Theorem [cf. \cite{Chen63}, \cite{Hartman64}, \cite{Nelson69}, \cite{Pugh70} and \cite{Takens71}], the implicit formulae of $V$ are very complicated. See \cite{Hartman64} (p. 257), for example.''

In the two dimensional case Sell \cite[\textsection3 p.1038]{Sell85} managed to prove the conditions $m\ge 2s$ or $m\ge 2s+1$, and $r\ge 3m$. In particular $s\le r/6$.

In another work, he showed that the estimate $m=2s$ is optimal for a precise choice of eigenvalues \cite{Sellceontrexemple}. 
 
A simple consequence of the presently presented normal form techniques is that we can obtain much better lower bound on $s$ with respect to $r$ and $m$ if we replace the Sternberg condition of order $m$ by the following:
\begin{equation} \tag{$\mathcal D_r$} \lambda \sigma^{r-2}<1\end{equation}
Condition $(\mathcal D_r)$ is the connected component of $(\R\setminus [-1,1])\times \{0\}$ of $\{(\sigma,\lambda)\in\R^2\; \text{satisfying }\;(\mathcal S_{r-1})\}$. In particular, Condition $(\mathcal D_r)$ implies $(\mathcal S_{r-1})$. 
A simple consequence of our renormalization techniques is that it implies $s=r-2$-linearization:
\begin{theo} If $r\ge 4$, if $f$ is a $C^r$ diffeomorphism of $\R^2$ for which $0$ is a hyperbolic fixed point, the eigenvalues of which satisfy $(\mathcal D_r)$ then it is $C^{r-2}$-linearizable. 
\end{theo}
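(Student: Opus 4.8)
The plan is to realize the linearization as a Koenigs-type limit taken along the expanding direction and fibered over the strongly contracting one, and then to read off the optimal regularity from the arithmetic balance between the expansion rate $\sigma$ and the contraction rate $\lambda$ that is encoded in $(\mathcal D_r)$. First I would record what $(\mathcal D_r)$ buys us: being the connected component of $(\R\setminus[-1,1])\times\{0\}$ of the non-resonant set, it forces the eigenvalues to be real and distinct with $|\sigma|>1$ and $0\le|\lambda|<|\sigma|^{-(r-2)}<1$, so $0$ is a dissipative saddle; normalizing signs I assume $0<\lambda<1<\sigma$ (the other sign cases are identical after passing to $f^2$ or a reflection). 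Using the $C^r$ smoothness I would straighten $W^u$ to $\{y=0\}$ and $W^s$ to $\{x=0\}$, linearize the one–dimensional hyperbolic dynamics on each invariant manifold, and straighten the unstable foliation, so that $f$ is brought to the skew product $F(x,y)=(h(x,y),\lambda y)$ over the contracting base $y\mapsto\lambda y$, with expanding fiber coordinate $x$, satisfying $h(x,0)=\sigma x$ and $h(0,y)=0$. This is exactly the fibered normal form furnished by the technique of \textsection\ref{sectapp2}; since $(\mathcal D_r)$ implies $(\mathcal S_{r-1})$ there are no resonant monomials of order $\le r-1$ to obstruct it. One derivative may be lost at this stage, the unstable foliation of a $C^r$ map being a priori only $C^{r-1}$, and this single loss accounts for the gap between the convergence order obtained below and the $C^{r-2}$ claim.

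Next I would construct the conjugacy explicitly. Writing $X_n(x,y)$ for the first coordinate of $F^n(x,y)$, fiber expansion gives $X_n\sim\sigma^n x$, and I set $\psi(x,y)=\lim_{n\to\infty}\sigma^{-n}X_n(x,y)$. The limit exists because consecutive terms differ by $\sigma^{-(n+1)}\bigl(h(X_n,\lambda^n y)-\sigma X_n\bigr)=O(\lambda^n)$, using that $h(\cdot,0)$ is linear and $\partial_Y h=O(X_n)=O(\sigma^n)$. Since $X_n(F(x,y))=X_{n+1}(x,y)$, one gets $\psi\circ F=\sigma\cdot\psi$ in the first coordinate, so $\Phi(x,y):=(\psi(x,y),y)$ satisfies $\Phi\circ F=L\circ\Phi$ with $L=\mathrm{diag}(\sigma,\lambda)$; as $\psi(x,0)=x$, $\Phi$ is a local diffeomorphism and conjugates $F$ to its linear part.

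The hard part is the regularity of $\psi$, and this is where $(\mathcal D_r)$ does its work. Differentiation in the fiber direction $x$ is harmless: the only potentially large terms carry a factor $\partial_X^a h$ with $a\ge2$, which vanishes on $\{y=0\}$ because $h(\cdot,0)$ is linear, hence is $O(\lambda^n)$, yielding absolutely convergent series at every order $\le r$. The delicate direction is the contracting base $y$. I would prove inductively that $\partial_y^p X_n=O(\sigma^n)$ for each $p$, whence the $k$-th $y$-derivative of $\sigma^{-n}X_n$ is a finite sum of contributions propagated through the fiber expansion with weight $\sigma^{-j}\times(\text{forcing at level }j)$. The dominant forcing is $\partial_X^k h\cdot(\partial_y X_j)^k$, of size $O(\lambda^j)\cdot O(\sigma^{jk})=O(\lambda^j\sigma^{jk})$, which after the $\sigma^{-j}$ weight becomes $O\bigl((\lambda\sigma^{k-1})^j\bigr)$. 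Thus the series defining $\partial_y^k\psi$ converges precisely when $\lambda\sigma^{k-1}<1$, so $(\mathcal D_r)$, i.e. $\lambda\sigma^{r-2}<1$, secures convergence up to order $k=r-1$; subtracting the one derivative lost in the preliminary straightening yields a $C^{r-2}$ linearization of $f$.

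I expect the genuine obstacle to be exactly this derivative count, carried out uniformly and simultaneously for all mixed partials rather than for the extremal term alone: one must check that every Fa\`a di Bruno term contributes a threshold of the form $\sigma^{a-1}\lambda^{b+1}<1$ with $a\ge2$ and $a+b\le k$, and that all of these are dominated by the extremal case $b=0$ that $(\mathcal D_r)$ controls. The remaining ingredients — the $C^{r-1}$ smoothness and convergence of the fibered normal form, and the fact that $(\mathcal D_r)\Rightarrow(\mathcal S_{r-1})$ removes the finitely many resonant terms — are either established earlier in the paper or elementary, so the whole argument reduces to the one uniform estimate above.
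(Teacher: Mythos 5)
You fiber in the wrong direction, and this is a genuine gap rather than a technicality. Your entire argument rests on the preliminary normal form $F(x,y)=(h(x,y),\lambda y)$, i.e.\ on a $C^{r-1}$ invariant foliation by the level sets of the second coordinate --- an \emph{unstable} foliation, transverse to $W^s$, whose leaves are mapped to leaves. Under the dissipation hypothesis $(\mathcal D_r)$ (which forces $\lambda\sigma<1$) this is precisely the foliation that is \emph{not} smooth: the graph transform fixing the unstable line field composes sections with $f^{-1}$, so the contraction factor on $k$-th transverse derivatives is of order $(\lambda/\sigma)\lambda^{-k}$, which exceeds $1$ for every $k\ge 2$ as soon as $\lambda\sigma<1$. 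Concretely, already for the linear model, an invariant foliation transverse to $W^s$ has slope behaving along a vertical line like $y^{\alpha}$ with $\alpha=1+\log\sigma/|\log\lambda|\in(1,2)$ in the dissipative regime, hence is $C^1$ but generically not $C^2$; the only smooth invariant unstable foliations are the ones pulled back from linearizing coordinates. So ``straightening the unstable foliation at the cost of one derivative'' assumes what is to be proved, and your side remark that ``the unstable foliation of a $C^r$ map is a priori only $C^{r-1}$'' is false here --- it is a priori only $C^1$. The appeal to \textsection\ref{sectapp2} does not repair this: the fibered normal forms there (types $A$, $B$, Linear) are skew products \emph{over the expanding coordinate}, $F_{\st}(x,y)=(\sigma_\st x,\,g_\st(x,y))$, not over the contracting one; and the implication $(\mathcal D_r)\Rightarrow(\mathcal S_{r-1})$ only removes formal Taylor obstructions, which says nothing about the transverse regularity of an actual invariant foliation.

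The paper's proof goes the opposite way, and this asymmetry is its whole point: dissipation makes the \emph{stable} foliation smooth. Proposition \ref{foliation} builds from $(\mathcal D_r)$ a pseudo-invariant stable line field of class $C^{r-1}$ --- there the graph transform composes with $f$ itself, so the relevant factors are $\lambda\sigma^{k-1}\le\lambda\sigma^{r-2}<1$ for all $k\le r-1$. In the resulting charts $f(x,y)=(g(x),h(x,y))$; the expanding base $g$ is linearized by one-dimensional Sternberg without loss; and the contracting fibers $y\mapsto h(x,y)$ are linearized by the fibered contraction of Proposition \ref{linearization} (after normalizing $\partial_y h(x,0)=\lambda$), with $x$ playing the role of a parameter --- the two lost derivatives occur in that last step. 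Note that your convergence arithmetic $\lambda\sigma^{k-1}<1$ is exactly the quantity that makes the paper's operators contract; but the paper applies it where it is true (stable direction, composition with the expanding map), whereas in your scheme the same estimate would already be needed to manufacture the foliation you start from, and there it fails.
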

\begin{proof}[Sketch of proof]
In Proposition \ref{foliation} we show that Condition $(\mathcal D_r)$ implies the existence of a pseudo-invariant $C^{r-1}$-foliation at the neighborhood of $0$. In a $C^{r-1}$-foliation chart the map $f$ has the form $f(x,y)=(g(x),h(x,y))$. Then by using Sternberg Theorem \cite{Sternberg57}, we can linearize without lost of derivatives the map $g$ to get the form $f(x,y)=(\sigma x,h(x,y))$ in a chart of class $C^{r-1}$. Actually the map $y\mapsto h(x,y)$ is of class $C^{r}$ for every $x$ and contracting. In Proposition \ref{linearization}, we first manage to get $\partial_y h(x,0)=\lambda$ for every $x$, and then we use a parameter dependent version of Sternberg Theorem to show linearize $y\mapsto  h(x,y)$. This cost one more derivative  to have a nice dependence with respect to $x$.
\end{proof}

Hence for this component of $(\mathcal S_{r-1})$, {\bf we get much a better function $V(r,s)=\min(r-1,s+1)$, instead of $V(r,s)=\min(r/3,2s)$ as previously}.    
it enables us to extend uniformly many hyperbolic results of unimodal map.

The developed normal formal techniques apply not only on fixed points but also along chain $(f^i9x))_{i=1}^n$ in a compact set (not necessarily invariant), the conjugacy being of $C^{r-2}$-bounded norm along arbitrarily long chains.
 
Moreover we will show the smoothness with respect to the parameter dependence. In the particular case of Hénon like maps, we will show that this parameter dependence is bounded ($C^{r-2}$-norm and size of the domain) {\bf even when the determinant goes to through zero}. This will be fundamental to show the aforementioned applications, since .


\subsection*{Notations} 
Let $M,M'$ be manifolds. A \emph{Riemannian metric} $g$ on $M$ is a continuous family $(g(z))_{z\in M}$ of inner products $g(z)$ of the tangent space $T_zM$ of $M$ at $z$. Let $g'$ be a Riemannian metric on $M'$ and $r\ge 1$.      
A map $f$ from $M$ into $M'$ is of class $C^r$ if its $r$-first derivatives, denoted by  $(D^i f(z))_{i\le r}$ depends continuously on $z\in M$. The map $D^i f(z)$ is linear from $T_zM\otimes \cdots \otimes T_zM$ into $T_{f(z)}M'$. The inner product $g(z)$ on $T_zM$ induces a norm $\|\cdot \|_z$ on $T_zM$. Canonically, the norm of  $u=(u_1,\dots,u_i)\in T_zM\otimes \cdots \otimes T_zM$ is $\|u \|:=\prod_k \|u_k \|_z$. The norm of  $D^i f(z)$ associated to $g$ and $g'$ is 
\[\|D^i f(z)\|_{g,g'}:=\max_{u\in T_zM\otimes \cdots \otimes T_zM, \|u\|_{g(z)}\le 1} \|D^i f(z)(u)\|_{g'(f(z))}.\]
If the norms $(\|D^i f(z)\|_g)_{z\in \R^2, 0\le i\le r}$ are bounded, we put:
\[\|f\|_{C^r,g,g'}:= \max_{z\in M, 0\le i\le r} \|D^i f(z)\|_{g,g'}\]
The space of $C^r$-maps of $\R^2$ endowed with  $\|\cdot \|_{C^r,g,g'}$ is a \emph{Banach space}, that is a complete vector space.

By $C^r$-\emph{endomorphism} we mean a map $f$ which is of class $C^r$, but which is not necessarily bijective, and whose differential is as well not necessarily surjective. 

If $\mathbb M$ is a manifold, a family $(f_p)_{p\in \mathbb M}$  of maps $f_p \colon M\to M'$ is\emph{ of class} $C^r$ if the following map is of class $C^r$: $M\times \mathbb M\ni(x,p)\mapsto f_p(x)\in M'$.

We recall that a \emph{cone field} $\chi$ on a domain $N$ of $\mathbb R^2$ is the data for every $z\in D$ of a nonempty open interval $\chi(z)$ of $\mathbb P^1\mathbb R$.

\section{Palis-Takens renormalization revisited}\label{PTrev}

Let us give more Hénon like related  definitions. 

\begin{defi} For $r\ge 1$, a \emph{Hénon like} family of class $C^r$ is a family of diffeomorphisms $(f_{a})_a$ of $\R^2$ which can be written in the form: 
\[f_{a}\colon (x,y)\mapsto (x^2+a+y,-b x) +(A_{a},b\cdot B_{a})(x,y),\]
 where $(x,y,a)\mapsto (A_a(x,y), B_{a}(x,y))$ is of class $C^r$.
 
The Hénon like family $(f_a)_a$ belongs to the actual Hénon family when $A=B=0$.

If the $C^r$-norms of $(x,y)\mapsto A_a(x,y)$ and $(x,y)\mapsto B_{a}(x,y)$ are smaller than $\delta$, the map $f_a$ is said \emph{Hénon $C^r$-$\delta$-like}.  If moreover $(x,y,a)\mapsto A_{a}(x,y)\in C^2(\R^3,\R^2)$ and $(x,y,a)\mapsto B_{a}(x,y)\in C^2(\R^3,\R^2)$ are $\delta$-small for every $a$, the family of maps $(f_a)_a$ is \emph{Hénon $C^r$-$\delta$-like}. 
\end{defi}
\begin{rema} The determinant of the Hénon like map is in $[(1-3\delta)b,(1+3\delta)b]$. Hence the determinant is small whenever $b$ is small.
\end{rema}

\begin{rema} We observe however that  the above expression of Hénon $\delta$-$C^r$-like map or family is much more precise that the one given in \cite[Thm. 1 P.47]{PT93}, \cite[App. A.2]{YW} or  \cite[Thm. 2.1]{Mora-viana}.  Such a form is crucial to study the possible number of attracting cycles in the Hénon family \cite{zoology}. \end{rema}

Let $(f_\mu)_{\mu\in \mathbb R}$ be a smooth family of $C^{\infty}$-maps of a surface having a hyperbolic fixed point $\Omega_\mu$.

We suppose that the eigenvalues $(\lambda_\mu,\sigma_\mu)$ of $\Omega_\mu$ satisfies at $\mu=0$ the following condition:
\begin{equation*}
\tag{$\mathcal R$}
 0<\lambda_0<1< \sigma_0 ,\quad   \lambda_0\cdot \sigma_0<1\quad \text{and}\quad \lambda_0^i\cdot \sigma_0^j\not= 1, \quad \forall i,j\not=0 \end{equation*}
The same condition is asked in \cite{PT93}. For every $r\ge 1$, this implies the existence of a $C^r$-chart $\phi_\mu$ \emph{linearizing $f_\mu$} at a neighborhood $D$ of $\Omega_\mu$, for every $\mu$ small \cite{Ta71}. 
This means that in $\phi_\mu$-coordinates $f_\mu|D\cap f_\mu^{-1}(D)$ has the form:
\[\phi_\mu^{-1} \circ f_\mu \circ \phi_\mu = : L_\mu\colon (x,y)\mapsto (\sigma_\mu x,\lambda_\mu y)\]
Moreover the family $(\phi_\mu)_\mu$ is $C^r$ ({\it i.e.} the map $(x,y,\mu)\mapsto \phi_\mu(x,y)$ is of class $C^r$). 

In these charts,  the point $\Omega_\mu$ is mapped to $0$, local stable and unstable manifolds of $\Omega_\mu$ onto $\{0\}\times \R$ and $ \R\times \{0\}$ respectively. 

We suppose that the homoclinic tangency holds at $\mu=0$ and that a \emph{non-degenerate unfolding holds} with $(f_\mu)_\mu$.\index{non-degenerate unfolding of a homoclinic tangency} This means that an iterate $f^N_0$ sends a point $P=\phi_\mu(p,0)$ to a point $Q=\phi_\mu(0,q)$, 
and for every $\mu$ small, and on a neighborhood $D_P$ of $P$ sent by $f_\mu^N$ into $D$,  $f_\mu^N|D_P$ has the following form (in the coordinates given by $\phi_\mu$):
\[ (p+x, y)\in D_P\mapsto  (\xi x^2+\mu +\gamma \cdot y, q+\zeta \cdot x)+ E_\mu(p+x, y)\in D\]       
where $\xi$, $\gamma$ and $\zeta$ are non-zero constants (independent of $\mu$), and $E_\mu=(E_\mu^1,E_\mu^2)\in C^{r}(\R\times \R^{2}, \R^2)$ satisfies:
\begin{enumerate}[$(\mathcal E_1)$]
\item $(x,y,\mu)\mapsto E_\mu(x,y)\in \R^2$ is $C^r$ and  the first coordinate $E_\mu^1$ of $E$ satisfies $\partial_\mu E_\mu^{1}(P)=0$.
\item $E_0(P)=0$.
\item $\partial_x E_0^{1}(P)=\partial_y E_0^{1}(P)=\partial_{xx} E_0^{1}(P)=0$, $\partial_x E_0^2(P)=0$. 
\end{enumerate}

\begin{figure}[h!]
    \centering
        \includegraphics{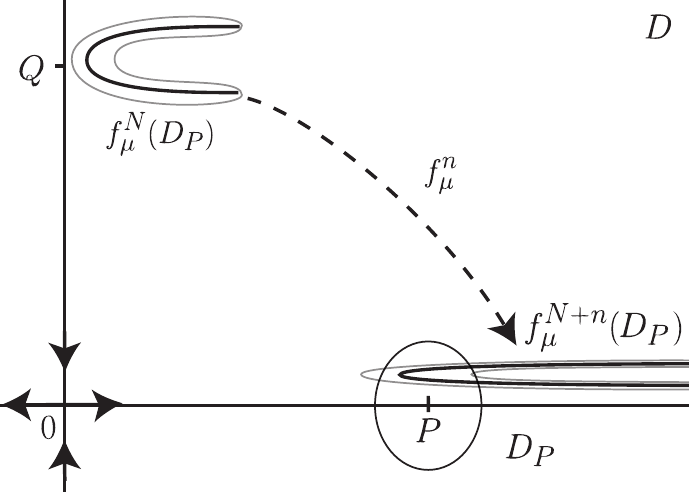}
    \caption{Unfolding of a Homoclinic tangency.}
\end{figure}
Here is an improvement of the celebrated Palis-Takens Theorem \cite{PT93} proved in \textsection\ref{Proof of Palis-Takens }.
 \begin{theo}\label{PT2} Let $r\ge 2$ and $\delta>0$. 
 For a one parameter family $(f_\mu)_\mu$ as above, there exists for each positive integer $n$, reparametrization $\mu=  M_n(\tilde\mu)$ of the $\mu$ variable and $\tilde \mu$-dependent coordinate transformations $(\tilde x,\tilde y)\mapsto (x,y)= \Psi_{n,\tilde \mu} (\tilde x, \tilde y)$ such that
 \begin{itemize}
 \item for each compact set $K$ in the $(\tilde \mu, \tilde x,\tilde y)$-space, the images of $K$ under the maps 
  \[(\tilde x,\tilde y, \tilde \mu)\mapsto (x,y,\mu)= ( \Psi_{n,\tilde \mu} (\tilde x, \tilde y),\tilde M_n(\tilde \mu))\]
 converge, for $n\to \infty$, in the $(x,y, \mu)$ space to $(P,0)$.
 \item the domains of the maps 
   \[(\tilde x,\tilde y, \tilde \mu)\mapsto \mathcal R f_{\tilde \mu}(\tilde x, \tilde y):=\Psi_{n,\tilde \mu}^{-1}\circ ((f|D)^n\circ (f^N|D_P))_{M_n(\tilde \mu)} \circ \Psi_{n,\tilde \mu}(\tilde x,\tilde y)\]
   converge, for $n\to \infty$, to all $\R^3$.
 \item Moreover for every compact set $K$ in the $(\tilde x,\tilde y, \tilde \mu)$ when $n$ is large, the restriction to $K$ of the family $(\mathcal R f_{\tilde \mu})_{\tilde \mu}$ is Hénon $C^r$-$\delta$-like (and with small determinant).  
\end{itemize}
\end{theo}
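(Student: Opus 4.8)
The plan is to explicitly construct the reparametrization $M_n$ and the coordinate changes $\Psi_{n,\tilde\mu}$ by composing the linear dynamics near $\Omega_\mu$ with the tangency map, rescaling so that the dominant quadratic term survives in the limit. The idea is the classical Palis-Takens renormalization: because $f_\mu$ is linearized near $\Omega_\mu$ as $L_\mu:(x,y)\mapsto(\sigma_\mu x,\lambda_\mu y)$, an orbit that approaches $P$, follows the tangency to land near $Q$, and then returns along the local dynamics is governed by the composition $(f|D)^n\circ(f^N|D_P)$. The return map in the linearizing chart is computable in closed form up to the error $E_\mu$: applying $L_\mu^n$ contracts the $y$-direction by $\lambda_\mu^n$ and expands the $x$-direction by $\sigma_\mu^n$, so to see a nontrivial limit one introduces the scaling $x=p+\sigma_\mu^{-n}\tilde x$, $y=\lambda_\mu^{-n}\cdot(\text{something})\cdot\tilde y$, and $\mu=M_n(\tilde\mu)$ chosen so that the $\mu$ term in the quadratic map $\xi x^2+\mu+\gamma y$ is pulled back to order $\sigma_\mu^{-2n}\tilde\mu$, matching the scale of $\xi x^2$.

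First I would write out $(f|D)^n$ explicitly in the linearizing chart as $L_\mu^n:(x,y)\mapsto(\sigma_\mu^n x,\lambda_\mu^n y)$, then compose with the tangency map $f_\mu^N|D_P$ given in the statement, obtaining an expression of the form $(\xi x^2+\mu+\gamma y+E^1,\,q+\zeta x+E^2)$ followed by the linear iterate. Next I would perform the affine rescaling in $(\tilde x,\tilde y)$ and the reparametrization in $\tilde\mu$, with scaling factors built from $\sigma_\mu^n$, $\lambda_\mu^n$, and the constants $\xi,\gamma,\zeta$, designed so that the leading part converges to the normalized Hénon form $(\tilde x^2+\tilde\mu+\tilde y,-b\tilde x)$ as in the definition of a Hénon $C^r$-$\delta$-like family. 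The role of the dissipativity hypothesis $\lambda_0\sigma_0<1$ in condition $(\mathcal R)$ is to guarantee that the determinant, which scales like $(\lambda_\mu\sigma_\mu)^n$ times a bounded factor, tends to zero, so the resulting family genuinely has small determinant; the condition $\lambda_0^i\sigma_0^j\neq1$ supplies the $C^r$-linearization $\phi_\mu$ from \cite{Ta71} used throughout.

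The three bullet points are then verified in turn. For the first, the images of a compact $K$ converge to $(P,0)$ because the scaling factors $\sigma_\mu^{-n}$ and $\lambda_\mu^{-n}$-type terms shrink the window around $P$ to a point. For the second, the domains expand to all of $\R^3$ since the inverse scaling $\sigma_\mu^{n}$ blows up any fixed compact set in the $(\tilde x,\tilde y,\tilde\mu)$-coordinates. For the third and most delicate point, I would estimate the rescaled error term: the conditions $(\mathcal E_1)$–$(\mathcal E_3)$ force $E_0$ and its relevant low-order derivatives to vanish at $P$, so after rescaling by the contracting factors the error contributes only higher-order terms that are $C^r$-small on each fixed compact $K$ once $n$ is large.

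\textbf{The main obstacle} will be the third bullet: controlling the rescaled error $E_\mu$ in the full $C^r$-norm (not merely $C^0$ or $C^2$), uniformly on compact sets, and simultaneously showing the family fits the precise Hénon $C^r$-$\delta$-like normal form with its structured perturbation $(A_a,b\cdot B_a)$. The vanishing conditions $(\mathcal E_1)$–$(\mathcal E_3)$ only control $E_0$ and a few derivatives at the single point $P$; to get a genuine $C^r$-$\delta$ bound after composing with $L_\mu^n$ and rescaling, I expect to need a careful Taylor expansion of $E_\mu$ around $P$ together with the fact that the scaling factors suppress each derivative order by a definite power of $\sigma_\mu^{-n}$, which must dominate the loss incurred by differentiating the $n$-fold composition. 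Keeping the $b$-dependence explicit so that the perturbation really has the product structure $(A_a,b\cdot B_a)$ — and verifying the determinant stays in $[(1-3\delta)b,(1+3\delta)b]$ through the limit — is the subtle bookkeeping that distinguishes this sharpened statement from the original Palis-Takens theorem.
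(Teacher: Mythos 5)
Your proposal takes essentially the same route as the paper: write the return map $(f_\mu|D)^n\circ(f_\mu^N|D_P)$ in the linearizing chart, conjugate by an affine rescaling while reparametrizing $\mu$ at scale $\sigma_\mu^{-2n}$, and then control the rescaled error term in $C^r$ via the vanishing conditions $(\mathcal E_1)$--$(\mathcal E_3)$ and the exponential suppression of derivatives by the scaling factors, which is exactly the paper's computation followed by the analytic estimates of Proposition \ref{PTestimate} in \textsection\ref{proofofpropPTestimate}. The only slip is your tentative $y$-scaling $\lambda_\mu^{-n}\cdot(\text{something})$: since the $y$-scale must equal the square of the $x$-scale for $\xi x^2$ and $\gamma y$ to survive at the same order, the correct choice is $\sigma_\mu^{-2n}$ in $y$ (centered at the shift $r=\lambda_\mu^n \xi\gamma q$), as in the paper's change of coordinates $C_\mu$ --- a choice your own design criterion forces, so the rest of the plan goes through unchanged.
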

The improvement is on the precision of the form the renormalized map $\mathcal R f_{a_\mu}$, in Palis-Takens merely said that this family is $C^r$-close to the family of maps $((x,y)\mapsto (x^2+a+y,0))_a$. 

\begin{rema}\label{preNH} We have 
  $M_n(\tilde \mu)= {\sigma_\mu^{-2n} \tilde \mu -\lambda_\mu^n\gamma q+\sigma_\mu^{-n}p}$.

Also there exists an attractive cycle of period $n+N$ for all $n$ large and $\tilde \mu\in [-0.1,0.1]$, that is for 
$\mu\in M_n([-0.1,0.1])$ which is an interval of length about $\sigma^{-2n}_\mu$ and about $\sigma^{-n}_\mu$-distant to $0$, with uniform bounds in $n$.
\end{rema}

Theorem \ref{Misurenorm} of this manuscript will generalize the above Palis-Taken Theorem revisited for domain $D$ in which are in a neighborhood of a hyperbolic set, in a dissipative context. This will be useful together with the linearizing normal form.

\section{Fibered normal forms and linearization for Hénon like maps}\label{sectapp2}
In this section we defined an equivalent to the Birkhoff normal form for Hénon Like maps. The terminology is similar but different to the generalization given in the appendix of \cite{Guguyoyo10}. Actually by loosing two derivatives and assuming some dissipation, we reach a linear form.

Let $r\ge 1$ and let $f$ be a $C^r$ map of $\mathbb R^2$ {\bf not necessarily (locally) invertible}.  

\subsection{Definitions}
\paragraph{Fibered normal forms}
\begin{defi}
A \emph{$(C^r)$-semi-local chart} is the data of a pair $(y_\sa,\chi_\sa)$ where $y_\sa$ is  a $C^r$-diffeomorphism from the product of two $\R$-segments $I_\sa^u\times I_\sa^s$ containing $0$ in its interior, onto its image  $Y_\sa\subset \mathbb R^2$ and $\chi_\sa$ is a cone field on $Y_\sa$ such that
$D_zy_\sa(1,0)$ is in $\chi$ for every $z\in I_\sa^u\times I_\sa^s$.

We also define $\partial^s Y_\sa:= y_\sa((\partial I_\sa^u)\times I_\sa^s)$ and $\partial^u Y_\sa:= y_\sa( I_\sa^s\times(\partial I_\sa^u))$.\end{defi}

We recall that a (non-necessarily connected) graph $(\sY, \sT)$ with vertexes $\sY$ and arrows $\sT\subset \sY^2$ is \emph{convergent} if to every vertex, it ends at most one arrow.  It is \emph{divergent} if from every vertex, it starts at most one arrow. A chain of symbols $\underline \sa:= (\sa_i)_{i=1}^N\in \sY^N$ is \emph{admissible} if $\st_i=(\sa_i,\sa_{i+1})\in \sT$ for every $i<N$. 

Let $f$ be in  $C^r(W,W)$, with $W$ an open set of $\R^2$. 
\begin{defi}A \emph{$(C^r)$-system of charts} adapted to $f$ is a triplet $(\sY, \sT, \mathcal Y)$, where $(\sY, \sT)$ is a divergent and convergent graph and $\mathcal Y$ is a family of $(C^r)$-semi local chart $\mathcal Y =:(y_\sa,\chi_\sa)_{\sa\in \sY}$ such that for every 
$\st=(\sa,\sb)\in \sT$ it holds: 
\begin{itemize}
\item[$(S_0)$] $f(y_\sa(0))=y_\sb(0)$,
\item[$(S_1)$] $y_\sb(I_\sb^u\times \{0\}) $ is a segment of $f(y_\sa(I_\sa^u\times \{0\}))$,
\item[$(S_2)$] for every $z\in Y_\sa\cap f^{-1}(Y_\sb)$, $D_zf(\chi_\sa(z))\subset \chi_\sb(f(z))$,
\item[$(S_3)$] for every $y\in I_\sa^s$, 
 the curve $\{f\circ y_\sa(x,y):\; x\in I_\sa^u\}$ intersects $Y_\sb$ at a single segment with end points in the interior of $\partial^s Y_\sb$. 
\end{itemize}
\end{defi}

Two systems of charts $(\sY, \sT, \mathcal Y)$ and $(\sY', \sT', \mathcal Y')$ are {\bf equivalent} if they are associated to the same graph $(\sY, \sT)=(\sY', \sT')$ and for every $\a\in \sT$ the maps $y_\a$ and $y_\a'$ have the same image $Y_a$.

We look for systems of charts in which the representation of $f$ is the simplest possible, that is the family $(F_{\st})_{\st=(\sa,\sb)\in \sT}$ defined by:
\[F_{\st} = y_{\sb}^{-1}\circ f\circ y_{\sa}.\]
\begin{defi} Let $(\sY,\sT,\mathcal)$ be a system of charts adapted to $f $.

\begin{itemize}
\item It is of \emph{Type $A$} if for all $\st \in \sT$, there exists $\sigma_\st\in \R$ such that  $F_{\st}(x,0)=\sigma_\st\cdot x$ for every $x$.
\item It is of \emph{Type $B$} if for all $\st \in \sT$, the map $F_{\st}$ is of the form:
$F_{\st}(x,y)= (\sigma_\st\cdot x,g_\st(x,y))$, for all $x,y$.
\item It is of \emph{Type Linear} if  for all $\st \in \sT$, the map $F_{\st}$ is linear: there exist $\sigma_\st,\lambda_\st\in \R$ such that 
$F_{\st}(x,y)= (\sigma_\st\cdot x,\lambda_\st\cdot y)$, for all $x,y$.
\end{itemize}
 \end{defi}
We notice that a linear system of charts of type $B$ and so of type $A$. A nice hypothesis to make a system of chart of better type is \emph{hyperbolicity}:
\begin{defi} 
A system of charts is \emph{$(C,\Lambda)$-hyperbolic} for  $C>0$, $\Lambda>1$, if for every admissible sequence $(\sa_1, \dots, \sa_k)\in \sA^k$, we have
\begin{equation}\tag{$\mathcal {M}$}
\forall i\le k, \;\forall j\le k-i, \; \forall u\in \chi_{\sa_i},\quad \|Df^j|u\|\ge C\Lambda^j 
\end{equation}
\end{defi}
We remark that condition $(\mathcal M)$ is the canonical generalization for surface maps of condition $(\mathcal M^1)$ for unimodal maps.

\paragraph{Fibered and parameter dependent normal forms}

Given a submanifold $\M\subset \R^d$, $d\ge 1$, a family $(f_\mu)_{\mu\in \M}$ of maps of $\R^2$  is of class $C^r$ if the following is of class $C^r$ 
\[(z,\mu)\in \mathbb R^2\times \M\to f_\mu(z).\]
\begin{defi} 
A \emph{$(C^r)$-parameter dependent system of charts} is the data of a family  of systems of charts $(\sY(\mu), \sT(\mu), \mathcal Y(\mu))$ of $f_\mu$ for every ${\mu\in \M}$ so that the graphs do not depend on $\mu$: $\sY(\mu)=\sY$ and $\sT(\mu)=\sT$, and  such that the domain of $I_{\sa\mu}^u\times I_{\sa\mu}^s$ of $y_{\sa\mu}\in \mathcal Y(\mu)$ and  the cone  $\chi_{\sa\mu}$ depend continuously on $\mu\in \M$, and so that the following map is $C^r$:  
 \[\cup_{\M }  I_{\sa\mu}^u \times I_{\sa\mu}^s\times \{\mu\}\ni (z,\mu)\rightarrow y_{\sa\mu}(z)\in \mathbb R^2\]
\end{defi}

To get linear system of charts, we are going to lose derivatives at several steps. In order to lose the less possible, let us define intermediate classes of regularity.

\begin{defi} 
A \emph{$(C^r_A)$-parameter dependent system of charts} (resp. \emph{$(C^r_B)$}) is the data of a family  of systems of charts $(\sY(\mu), \sT(\mu), \mathcal Y(\mu))$ of $f_\mu$ for every ${\mu\in \M}$ so that the graphs do not depend on $\mu$: $\sY(\mu)=\sY$ and $\sT(\mu)=\sT$, and  such that the domain of $I_{\sa\mu}^u\times I_{\sa\mu}^s$ of $y_{\sa\mu}\in \mathcal Y(\mu)$ and  the cone  $\chi_{\sa\mu}$ depend continuously on $\mu\in \M$, and so that the following map:  
 \[\cup_{\M }  I_{\sa\mu}^u \times I_{\sa\mu}^s\times\{\mu\}\ni (z,\mu)\rightarrow y_{\sa\mu}(z)\in \mathbb R^2\]
has its partial derivatives of the form $\partial_x^i\partial_y^j\partial_\mu^k$ well defined and continuous for every $(i,j,k)\in \Delta^r_A$ (resp. $\Delta^r_B$), with:
\[\Delta^r_A:= \{(i,j,k):\; i+j+k\le r,\; k\le r-2\}\]
\[\Delta^r_B:= \Delta^r_A\setminus \{r\}\times \N^2\]
\end{defi}

\begin{defi} For $r\ge 2$, the norm of classes $C^r$, $C^r_A$ and $C^r_B$ of a parametrized system of charts $(\sY,\sT)$ are respectively:
\[\sup_{\sa\in \sY} ( \|(z,\mu)\mapsto y_{\sa\mu}(z) \|_{C^r} , \|(z,\mu)\mapsto y_{\sa\mu}^{-1}(z)\|_{C^1}).\]
\[\sup_{\sa\in \sY} (	\|(z,\mu)\mapsto y_{\sa\mu}^{-1}(z)\|_{C^1}, \sup_{(z,\mu)}\{\|\partial_x^i\partial_y^j\partial_\mu^k y_{\sa\mu}(z) \|:\;  (i,j,k)\in \Delta^r_A\}).\]
\[\sup_{\sa\in \sY} (	\|(z,\mu)\mapsto y_{\sa\mu}^{-1}(z)\|_{C^1}, \sup_{(z,\mu)}\{\|\partial_x^i\partial_y^j\partial_\mu^k y_{\sa\mu}(z) \|:\;  (i,j,k)\in \Delta^r_B\}).\]
\end{defi}

Actually it is very easy to make a system of class $A$ from the following:
\begin{prop}\label{semilinearization}
Let $r\ge 2$ and let $(f_\mu)_\mu$ be a family $C^{r}$-map of $\R^2$, preserving a $(C,\Lambda)$-hyperbolic  $C^r_A$-system of charts $(\sY, \sT, \mathcal Y(\mu))_\mu$ with bounded $C^r$-norm. 
Then  for every $\mu$, there exists  an equivalent $C^r_A$-system of charts $(\sY, \sT, \mathcal Y'(\mu))_\mu$ of type $A$ adapted to  $(f_\mu)_\mu$. Moreover its norm is bounded in function of $(C,\lambda)$, and the norms of $(\sY, \sT, \mathcal Y(\mu))_\mu$ and $f$.
\end{prop}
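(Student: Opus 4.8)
The plan is to reduce Type $A$ to a one-dimensional linearization problem along the unstable axes and to solve it by a nonautonomous Koenigs construction driven by the hyperbolic expansion. First I would record what the chart axioms already force. By $(S_1)$ each $f$ carries $y_\sa(I_\sa^u\times\{0\})$ into $y_\sb(I_\sb^u\times\{0\})$, so $F_\st$ preserves $\{y=0\}$ and $F_\st(x,0)=(\phi_\st(x),0)$ for a $C^r$ map $\phi_\st$; by $(S_0)$ one has $\phi_\st(0)=0$, and $(S_1)$ even gives $I_\sb^u\subset\phi_\st(I_\sa^u)$, so the inverse $\phi_\st^{-1}$ is defined on all of $I_\sb^u$. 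Applying $(\mathcal M)$ to $u=D_zy_\sa(1,0)\in\chi_\sa$ on $\{y=0\}$ says exactly that the forward compositions of the $\phi_\st$ expand by at least $C\Lambda^k$, so $|\sigma_\st|:=|\phi_\st'(0)|\ge C\Lambda$ and $\phi_\st^{-1}$ is a genuine contraction. Since Type $A$ only constrains $F_\st$ on $\{y=0\}$, it suffices to replace $y_\sa$ by $y_\sa'=y_\sa\circ H_\sa$ with $H_\sa(x,y)=(\eta_\sa(x),y)$ for a diffeomorphism $\eta_\sa$ of $I_\sa^u$ fixing $0$: this leaves $Y_\sa$, the origins and $(S_0)$--$(S_3)$ unchanged, preserves the cone condition (since $D_zy_\sa'(1,0)=\eta_\sa'(x)\,D y_\sa(1,0)$ is a positive multiple of a vector in $\chi_\sa$), and turns $\phi_\st$ into $\eta_\sb^{-1}\circ\phi_\st\circ\eta_\sa$. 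Thus I must solve the conjugacy $\phi_\st\circ\eta_\sa=\eta_\sb\circ(\sigma_\st\,\cdot)$ for every $\st=(\sa,\sb)\in\sT$, with $\sigma_\st:=\phi_\st'(0)$.

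Next I would build $\eta_\sa$ by a telescoping Koenigs limit along backward orbits. Since $(\sY,\sT)$ is convergent and divergent it is a disjoint union of finite or infinite simple paths and cycles; writing a path through $\sa$ as $\cdots\to\sa_{-1}\to\sa_0=\sa\to\cdots$ and $\phi_i:=\phi_{(\sa_i,\sa_{i+1})}$, I set $\Phi^{(k)}_\sa:=\phi_{-1}\circ\cdots\circ\phi_{-k}$, $\Sigma^{(k)}_\sa:=\sigma_{-1}\cdots\sigma_{-k}$, and define
\[\eta_\sa(x):=\lim_{k\to\infty}\Phi^{(k)}_\sa\big((\Sigma^{(k)}_\sa)^{-1}x\big),\]
using the finite $k$ (with $\eta=\mathrm{id}$ at the source) when the backward orbit is finite. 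A shift of index gives $\phi_\st\circ\Phi^{(k)}_\sa=\Phi^{(k+1)}_\sb$ and $\sigma_\st\Sigma^{(k)}_\sa=\Sigma^{(k+1)}_\sb$, from which the conjugacy equation follows in the limit, and holds exactly with no limit on finite paths. Convergence and uniform bounds come from the usual Koenigs estimate: writing $\phi_i(x)=\sigma_i x(1+\psi_i(x))$ with $|\psi_i(x)|\le M|x|$ from the bounded $C^r$-norm, the argument $(\Sigma^{(k)}_\sa)^{-1}x$ is $O(\Lambda^{-k})$ by $(\mathcal M)$, so the successive ratios of the defining sequence differ from $1$ by a geometrically summable amount, giving $\eta_\sa$ with $\eta_\sa'(0)=1$ and a deviation from the identity bounded uniformly in $\sa$.

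I would then upgrade this to the required $C^r_A$ regularity and bounds. On a single line there is no resonance obstruction: $|\sigma_\st|>1$ forbids $\sigma_\st=\sigma_\st^i$ for $i\ge 2$, so differentiating the telescoped product up to order $r$ in $x$ stays summable (each derivative brings another factor controlled by $\Lambda^{-k}$ against the $C^r$-bound of the $\phi_i$), yielding a genuine $C^r$ diffeomorphism with $C^1$-bounded inverse. Differentiating the same series in $\mu$, the contraction persists because $\sigma_\st(\mu)$ and the rates are $C^r_A$, and composition, inversion and passage to the limit preserve the index set $\Delta^r_A$; hence up to $r-2$ derivatives in $\mu$ (combined with spatial ones inside $\Delta^r_A$) remain continuous and uniformly bounded, with a bound depending only on $(C,\Lambda)$ and the $C^r_A$-norms of the original system and of $f$. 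Finally I would make $\eta_\sa$ a self-diffeomorphism of $I_\sa^u$ so that $Y_\sa$ is preserved exactly, extending the germ produced by the Koenigs limit by an orientation-preserving reparametrization supported away from $0$; this leaves the jets at $0$ and the conjugacy on $\{y=0\}$ intact.

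The main obstacle I expect is this uniform $C^r_A$ estimate: proving that the telescoped Koenigs products converge and stay bounded simultaneously over all arbitrarily long or infinite chains and jointly in the parameter, with exactly the loss prescribed by $\Delta^r_A$. The spatial regularity is clean thanks to the automatic non-resonance of a one-dimensional expanding multiplier, so the real work is controlling the $\mu$-derivatives of a geometric series whose contraction rate itself depends on $\mu$, and verifying that the boundary adjustment keeping each $Y_\sa$ fixed can be carried out with uniformly bounded $C^r_A$-norm.
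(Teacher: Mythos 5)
Your reduction is exactly the paper's: modify only the $x$-coordinate of each chart, so that the problem becomes linearizing the one-dimensional expanding maps $\phi_\st$ along the unstable axes, uniformly over chains and in $\mu$. Your Koenigs limit is also essentially the paper's object in disguise: the paper obtains $\phi_\sa=\eta_\sa^{-1}$ as the fixed point of the graph-transform operator $\Phi_\mu(\phi)_\sb=D_0f_{\st\mu}\cdot \phi_\sa\circ f_{\st\mu}^{-1}$, whose iterates applied to the identity are precisely your telescoped products read backwards, and it gets convergence, uniform $C^r$ bounds and the $\Delta^r_A$ parameter regularity from contraction of an iterate of this operator (Faa-di-Bruno plus an adapted rescaling of intervals/metric, which is also how one deals with the fact that $(\mathcal M)$ with $C<1$ makes only long compositions, not each single $\phi_\st^{-1}$, uniformly contracting).

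There is, however, one step in your plan that would actually fail: the final ``adjustment'' making $\eta_\sa$ a self-diffeomorphism of $I^u_\sa$ by a reparametrization supported away from $0$. Type $A$ is not a germ condition at $0$: it demands $F'_\st(x,0)=\sigma_\st\cdot x$ for \emph{every} $x$ in the new chart. The conjugacy $\phi_\st\circ\eta_\sa=\eta_\sb\circ(\sigma_\st\cdot)$ couples the values of $\eta_\sa$ near $x$ with those of $\eta_\sb$ near $\sigma_\st x$, so modifying the $\eta$'s independently away from $0$ destroys the identity at every modified point (keeping it would require the modifications themselves to be conjugate, which is the original problem over again). The step is also unnecessary: equivalence of systems of charts requires only that the \emph{images} $Y_\sa$ agree, not the domains, so the correct move is to keep the exact conjugacy and let the new chart be $y'_\sa(x,y)=y_\sa(\eta_\sa(x),y)$ on the domain $\eta_\sa^{-1}(I^u_\sa)\times I^s_\sa$ --- this is literally what the paper writes, $y'_{\sa\mu}(x,y)=y_{\sa\mu}(\phi_\sa^{-1}(x),y)$. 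Relatedly, your forward telescoping $\Phi^{(k)}_\sa\bigl((\Sigma^{(k)}_\sa)^{-1}x\bigr)$ has a domain issue when $x$ ranges over a full-size interval (one needs bounded distortion to keep the approximate backward orbit inside the successive $I^u$'s); building instead the inverse map from backward compositions $\phi_{\st}^{-1}$, which by $(S_1)$ are defined on all of $I^u_\sb$, yields a map defined on the whole interval at once and removes both this issue and any need for the boundary patch. With that correction, and with the uniform $\Delta^r_A$ estimate in $\mu$ carried out (the loss $k\le r-2$ arises because the geometric gain in the contraction is produced only by $\partial_x$-derivatives of order at least $2$, which is where the paper's argument does its real work), your scheme becomes a correct proof.
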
 
The proof of this Proposition is somehow classical and will be done in \textsection \ref{a2B}. 

Let us give a nice condition to construct linear system of charts. Let $f\in  C^{r}(W, W)$, with  $V\subset W\subset \mathbb R^2$ open subset and $r\ge 2$. We will suppose that there exists a cone field $\chi $ on  $W$ \emph{pseudo-invariant relatively to $V$} (this means that 
$cl(D_zf(\chi(z)))\subset  \chi(f(z))$, for all $z\in V$) and that there exist $C>0$ and $\Lambda>1$ such that for every $k\ge 1$:

\begin{equation}\tag{$\mathcal D_r$} 
\left\{\begin{array}{c}
\forall z\in V,\; D_zf(\chi(z)) \text{ is $C$-distant to } \P\R^1\setminus \chi(f(z)),\\ \forall z\in \bigcap_{i=0}^{k-1} f^{-i}(V),\; u\in \chi(z),
\quad\|D_z f^k|u\|^{r-3} \cdot |det \, D_z f^k |\le (C\Lambda ^k)^{-1}.\end{array}\right.\end{equation}


\begin{rema}\label{rema2.9} The first line of $(\mathcal D_r)$ implies the existence of a dominated splitting on $W$; thus there exists $C'>0$ and $\Lambda'>1$ (depending only on $C$) so that: 
\[\forall z\in \bigcap_{i=0}^{k-1} f^{-i}(V),\; \forall u\in \chi(z),\quad \frac{ |det \, D_z f^k |}{\|D_z f^k|u\|^2}\le (C'\Lambda'^k)^{-1}.\]
Hence, by taking $C'<C$ and $\Lambda'<\Lambda$, for all $p>0$, $q>0$, it holds by the second line of $(\mathcal D_r)$:
\[\forall z\in \bigcap_{i=0}^{k-1} f^{-i}(V),\; u\in \chi(z),\quad 
 \|D_z f^k|u\|^{(r-3)q-2p} \cdot |det \, D_z f^k |^{p+q}\le (C'\Lambda'^k)^{-p-q}\]
For suitable values of $p,$ so that $p+q=1$, 
the number $(r-3)q-2p$  reaches all the values in $\{-2,\dots, r-3\}$. Thus it comes that $(\mathcal D_r)$ implies:
\begin{equation}\tag{$\mathcal D_r$} \forall z\in \bigcap_{i=0}^{k-1} f^{-i}(V),\; u\in \chi(z),
\quad\|D_z f^k|u\|^{i} \cdot |det \, D_z f^k |\le (C'\Lambda'^k)^{-1},\quad \forall i\in\{-2,\dots,  r-3\}\end{equation}
\end{rema} 

We will see how to construct a system of charts of type $B$ thanks to the following.
\begin{prop}\label{foliation}  Let  $V\subset W\subset \mathbb R^2$ be open. Let $r\ge 1$ and let  $\mathbb M\subset C^{r}(W, W)$ be a submanifold. Suppose that there exists a pseudo-invariant cone field $\chi $ on  $W$ for  every $f\in \mathbb M$ so that Condition $(\mathcal D_r)$ holds for $C>0$, $\Lambda>1$ .  Then there exists a line field $e(f)\colon W\to \mathbb P^1$ depending only on  $C$, $\Lambda$ and $\|Df\|_{C^{r}}$, such that:
\begin{itemize} 
\item  For every $z\in V$, $T_z f(e (f)(z))$ is parallel to $e(f)(f(z))$, and $e(f)$ is in the interior of the complement of $\chi$.
\item  The map $(z, f) \mapsto e(f)(z) $ is of class $C^{r-1}$.  
\end{itemize}
\end{prop}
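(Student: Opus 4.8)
The plan is to identify $e(f)$ with the \emph{weak} (dominated, contracting) direction transverse to the expanding cone field $\chi$, and to produce it as the fixed point of a graph transform driven purely by the forward dynamics. By the first line of $(\mathcal D_r)$ and Remark \ref{rema2.9} I may work with the domination estimate $|\det D_zf^k|/\|D_zf^k|u\|^2\le (C'\Lambda'^k)^{-1}$ for $u\in\chi(z)$. Fix coordinates in which $\chi$ is a narrow cone about the horizontal direction; a line in the interior of the complement of $\chi$ is then the span of a vector $(p,1)$ with $|p|$ small, and I encode a complementary line field by its ``reciprocal slope'' $p(z)$. Writing $T_z$ for the projective action of $D_zf$ on $\mathbb P^1$, the invariance requirement $T_zf(e(f)(z))\parallel e(f)(f(z))$ becomes the M\"obius functional equation $p(f(z))=T_z(p(z))$, equivalently $p(z)=T_z^{-1}(p(f(z)))$.

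First I would set up the graph transform $\mathcal G$ on the complete metric space of continuous sections of the complementary cone bundle, $(\mathcal G p)(z):=T_z^{-1}(p(f(z)))$; note this uses only the forward image $f(z)$ and the inverse \emph{linear} action $T_z^{-1}$, so no inverse of $f$ is ever needed. In the model diagonal case the repelling fixed point of $T_z$ has multiplier $\|D_zf|u\|^2/|\det D_zf|$, so $T_z^{-1}$ contracts the complementary cone at rate $|\det D_zf|/\|D_zf|u\|^2\le (C'\Lambda')^{-1}<1$ (the estimate being exactly the one $(\mathcal D_r)$ guarantees along forward orbits in $V$); hence $\mathcal G$ is a uniform contraction and its unique fixed point is the sought line field $e(f)$. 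Uniqueness makes $e(f)$ canonical, so it depends on $f$ only through $C,\Lambda$ and $\|Df\|_{C^r}$, and the strict contraction places it in the interior of the complement of $\chi$. At points where $\det D_zf=0$ the map $T_z^{-1}$ degenerates; there I set $(\mathcal G p)(z)$ to be $\ker D_zf$ (which lies in the complement of $\chi$ --- since $\chi$-vectors have nonzero image --- and is the limit of the nondegenerate pullbacks). Degeneracy only sharpens the contraction, so the estimate and the continuity of $\mathcal G$ survive.

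Next I would upgrade the continuous fixed point to class $C^{r-1}$ in $(z,f)$ by the fibre-contraction (Hirsch--Pugh--Shub $C^\ell$-section) theorem, applied to the prolongations of $\mathcal G$ on the successive jet bundles of sections. At differentiation order $\ell$ the fibre contraction rate $|\det D_zf|/\|D_zf|u\|^2$ must beat the base amplification coming from differentiating along the expanding direction, which is of size $\|D_zf|u\|^\ell$; the product over $k$ steps is comparable to $|\det D_zf^k|\,\|D_zf^k|u\|^{\ell-2}$, which is exactly the quantity bounded by $(C'\Lambda'^k)^{-1}$ in the reformulation of $(\mathcal D_r)$ in Remark \ref{rema2.9} with $i=\ell-2$. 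Since that inequality is available for all $i\in\{-2,\dots,r-3\}$, the section theorem runs through the orders $\ell=0,1,\dots,r-1$, yielding the $C^{r-1}$ regularity jointly in $z$ and in the parameter $f\in\mathbb M$.

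The main obstacle is this last regularity step: organizing the fibre-contraction bookkeeping so that the section theorem closes uniformly at each order up to $r-1$ and uniformly over $\mathbb M$ --- this is precisely why Remark \ref{rema2.9} pre-extracts the whole family of bunching inequalities with exponents $i=-2,\dots,r-3$, the value $i=\ell-2$ matching the regularity order $\ell$. A secondary, more technical nuisance is the possible vanishing of $\det D_zf$, handled above by continuity and by the observation that degeneracy makes the relevant pullback more, not less, contracting.
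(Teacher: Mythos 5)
Your core mechanism is the same as the paper's: pull back the complementary line field along forward orbits by the inverse projective action of the derivative, get $C^0$ existence from a cone-contraction fixed point, and upgrade to $C^{r-1}$ regularity by a fibre-contraction argument whose rate at differentiation order $\ell$ is exactly controlled by the inequality of Remark \ref{rema2.9} with exponent $i=\ell-2$. (The paper runs this regularity step by hand, via Lemma \ref{corocontract} and the Leibniz/Faa-di-Bruno bookkeeping of Lemmas \ref{contractionpartielle} and \ref{Lemma 5.3}, rather than by invoking an abstract $C^\ell$-section theorem; that is a difference of packaging, not of substance.)

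However, there is a genuine gap: you use the estimates as if they were available at every $z\in W$, when the hypotheses only provide them on $V$ and along orbit segments staying in $V$. The cone field is pseudo-invariant only \emph{relatively to $V$}, and both lines of $(\mathcal D_r)$ (hence the domination $|\det D_zf^k|/\|D_zf^k|u\|^2\le (C'\Lambda'^k)^{-1}$ of Remark \ref{rema2.9}) are stated only for $z\in\bigcap_{i=0}^{k-1}f^{-i}(V)$. Your operator $\mathcal G p(z)=T_z^{-1}\bigl(p(f(z))\bigr)$ must act on sections over all of $W$, since the proposition asks for a line field on $W$ that is invariant only over $V$; but for $z\in W\setminus V$ (i) nothing guarantees that $T_z^{-1}$ maps the complement of $\chi(f(z))$ into the complement of $\chi(z)$, so $\mathcal G$ need not preserve your section space, and (ii) there is no contraction estimate at all, so orbits that leave $V$ and return destroy the uniform contraction --- your treatment of the locus $\det D_zf=0$ addresses a different and milder degeneracy. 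This is precisely the problem that the first, and most original, half of the paper's proof is built to solve: it replaces $Df$ by the auxiliary cocycle $\phi_f$, glued from $Df$ by the bump functions $\rho_1,\rho_2$ and corrected by the factor $\tau$ and the sign $sgn$, which agrees projectively with $Df$ on $V$, has determinant zero outside a neighborhood $V_1$ of $V$, and retains $\|\phi_f(z)\|\ge\|Df(z)\|$ and $|\det\phi_f(z)|\le|\det D_zf|$ on $V_1$; this makes the key estimate (\ref{Drpri}) valid for \emph{all} $z\in W$ and all $k\ge 0$, so that the pullback graph transform is a uniform contraction globally on $W$. Without this globalization step, or some substitute for it, your argument only proves the proposition in the partially hyperbolic case $V=W$, which the paper attributes to \cite{PSW97}.
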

 This proposition will be proved in \textsection \ref{Pseudo invariant stable foliation}. It is also similar to a result 
 in \cite{PSW97}, in the context of partially hyperbolic diffeomorphism (that is $V=W$). This proposition will be done by integrating the vector field $e(f)$.

To linearize a system of type $B$, we face to another difficulty: the second coordinate $g_\st$ of $F_\st(x,y)= (\sigma_\st, g_\st(x,y))$  is possibly not always of maximal rank, which makes it impossible to linearize, that is to reach the form $g_\st(x,y)=\lambda_\st\cdot y$, with $\lambda_\st$ a real number.

Indeed, up to now, we did not assume $f$ invertible.  This is why we will ask $f$ to be Hénon $\delta$-$C^1$-like with $\delta<1/2$.

\begin{prop}\label{linearization}
Let $r\ge 2$ and let $(f_\mu)_\mu$ be a family $C^{r}$-map of $\R^2$, preserving a $(C,\Lambda)$-hyperbolic $C^{r}_B$-system of charts $(\mathcal Y(\mu),\sY, \sT)_\mu$ of type $B$ with bounded norm. We suppose that $f_\mu$ is Hénon $C^1$-$\delta$-like with $\delta<1$. Suppose that condition $(\mathcal D_r)$ holds among admissible chains.
Then $(f_\mu)_\mu$ preserves an equivalent $C^{r-2}$-system of charts $(\mathcal Y'(\mu),\sY, \sT)_\mu$ which is linear.
Moreover the $C^{r-2}$-norm of $(\mathcal Y'(\mu),\sY, \sT)_\mu$ is bounded in function of $C,\Lambda,\delta$, the $C^r_B$-norm of $(\mathcal Y(\mu),\sY, \sT)_\mu$ and the $C^r$-norm of $(f_\mu)_\mu$.
\end{prop}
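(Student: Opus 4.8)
The plan is to keep the already-linear base action $x\mapsto \sigma_\st x$ fixed and to linearize only the fibers, using fiber-preserving changes of chart $\Phi_{\sa\mu}(x,y)=(x,\phi_{\sa\mu}(x,y))$ with $\phi_{\sa\mu}(x,0)=0$. Writing $F_\st(x,y)=(\sigma_\st x,g_\st(x,y))$, condition $(S_1)$ makes the $u$-axis invariant, so $g_\st(x,0)=0$ and each fiber map has the form $y\mapsto g_\st(x,y)=\ell_\st(x)\,y+O(y^2)$; the target is $g_\st(x,y)=\lambda_\st\,y$. Before anything else I would use the Hénon $C^1$-$\delta$-like hypothesis with $\delta<1$ to guarantee that every fiber map is a genuine one-dimensional embedding, i.e. $\partial_y g_\st\neq 0$ throughout the chart with a bound independent of the determinant. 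This is exactly the obstruction flagged before the statement, and without it the target form $g_\st(x,y)=\lambda_\st y$ could not be reached; it is also the point where uniformity as the determinant degenerates is secured.

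\paragraph{Step 1: normalizing the multiplier on the axis.} Set $\lambda_\st:=\partial_y g_\st(0,0)$ and look for first-order fiber rescalings $y\mapsto b_{\sa\mu}(x)\,y+O(y^2)$ making the new multiplier constant, $\partial_y g'_\st(x,0)=\lambda_\st$. This amounts to the multiplicative cohomological equation $b_\sb(\sigma_\st x)/b_\sa(x)=\ell_\st(x)/\lambda_\st$ along admissible chains, whose right-hand side equals $1$ at the repelling fixed point $0$ of the base. Since the base is expanding on the cone ($|\sigma_\st|>1$ by $(\mathcal M)$) and $(\sY,\sT)$ is divergent and convergent, every vertex has a unique backward orbit converging geometrically to $0$, and I would solve the equation by the telescoping product along that backward orbit. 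Convergence of the product and of its $(x,\mu)$-derivatives follows from the domination of the case $i=-2$ of Remark \ref{rema2.9}; this step essentially preserves regularity.

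\paragraph{Step 2: parameter-dependent Sternberg linearization of the fibers.} With the multiplier normalized, I would treat $(x,\mu)$ as parameters and linearize the one-dimensional contractions $y\mapsto g_\st(x,y)$ along chains by a parameter-dependent Sternberg (equivalently Koenigs) argument, removing the terms $a_{j,\st}(x)\,y^j$ for $j=2,\dots,r-2$ one order at a time. Killing the $y^j$-term by a change $y\mapsto y+c_{j,\sa}(x)y^j$ leads to the cohomological equation $c_{j,\sa}(x)-\lambda_\st^{\,j-1}c_{j,\sb}(\sigma_\st x)=\lambda_\st^{-1}a_{j,\st}(x)$, solved forward by $c_{j,\sa}(x)=\sum_{i\ge 0}\big(\prod_{m<i}\lambda_{\st_m}^{\,j-1}\big)\lambda_{\st_i}^{-1}a_{j,\st_i}(x_i)$. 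Convergence of the function values is immediate from fiber contraction, but each base derivative reintroduces the expansion $\prod_{m<i}\sigma_{\st_m}$, so the $s$-th base derivative is summable precisely when $\prod_{m<i}\sigma_{\st_m}^{\,s}\lambda_{\st_m}^{\,j-1}$ decays geometrically — exactly the content of the exponent range $\{-2,\dots,r-3\}$ in Remark \ref{rema2.9}, the binding case being $j=2$, $s=r-2$, governed by $\lambda_\st\sigma_\st^{\,r-2}<1$. This forces the loss of one derivative in the base variable and yields linearizing charts of joint class $C^{r-2}$; the restriction $k\le r-2$ defining $\Delta^r_B$ matches the admissible $\mu$-orders, so the parameter dependence costs nothing beyond this.

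\paragraph{Main obstacle.} The delicate point lies entirely in Step 2: one must show the Koenigs/Sternberg construction converges in the joint $C^{r-2}$-norm with bounds \emph{uniform over arbitrarily long admissible chains} and over $\mu\in\M$, and uniform as the determinant degenerates. The heart of the matter is that differentiating in $x$ feeds the base expansion back against the fiber contraction, and keeping exact track of which dissipation inequality $\|D_zf^k|u\|^{i}\,|\det D_zf^k|\le (C'\Lambda'^{k})^{-1}$ controls which mixed derivative $\partial_x^a\partial_y^b\partial_\mu^c$, then verifying that the borderline top-order term is precisely the one that forbids recovering $C^{r-1}$, is where all the care is required. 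The uniform bound $\delta<1$ is what keeps the fiber maps embeddings, and hence these estimates valid, independently of $b$.
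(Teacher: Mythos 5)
Your Step 0 and Step 1 do match the paper: the multiplier normalization along the invariant axis is exactly Lemma \ref{Lemmadederive}, which the paper solves by the same telescoping product $\Delta_{\sa\mu}(x)=\prod_i\delta_{\st_i\mu}\big(x/(\sigma_{\st_i\mu}\cdots\sigma_{\st_0\mu})\big)$ along the unique backward admissible chain, and the H\'enon $C^1$-$\delta$-like hypothesis is used there precisely as you say, to keep this product bounded away from zero so that the new charts and their inverses stay bounded. Your identification of the binding exponent $\lambda_\st\sigma_\st^{r-2}<1$ (via Remark \ref{rema2.9}) and of where derivatives are lost is also consistent with the paper's bookkeeping.

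The gap is in Step 2. Killing the Taylor coefficients $a_{j,\st}(x)y^j$ for $j=2,\dots,r-2$ by substitutions $y\mapsto y+c_{j,\sa}(x)y^j$ only conjugates $F_\st$ to $(x,y)\mapsto(\sigma_\st x,\lambda_\st y+R_\st(x,y))$ with $\partial_y^l R_\st(x,0)=0$ for $l\le r-2$; it does not produce a linear system of charts. For finitely smooth maps no finite sequence of polynomial changes of variable can achieve $g_\st(x,y)=\lambda_\st y$ exactly: the flat remainder must be removed by a genuinely infinite-dimensional limiting argument, with bounds uniform over arbitrarily long admissible chains, over $\mu$, and as the determinant degenerates --- and that is the actual content of the proof. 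The paper performs no jet removal at all: after Lemma \ref{Lemmadederive} it sets up the fibered Koenigs-type operator $\Phi_\mu(\phi)_\sa=\lambda_{\st\mu}^{-1}\,\phi_\sb(\hat f_{\st\mu}(x),\hat g_{\st\mu}(x,y))$ on the space of families $(\phi_\sa)_\sa$ with $\phi_\sa(x,0)=0$ and $\partial_y\phi_\sa(x,0)=1$, proves (as in Proposition \ref{semilinearization}, with $x$ playing the role of a parameter) that an iterate contracts the norms of the derivatives $\partial_x^i\partial_y^j$ with $i+j\le r$, $i\le r-2$, invoking $(\mathcal D_r)$ and Remark \ref{rema2.9} exactly where you do, and takes the fixed point as the linearizing fiber coordinate $y'_\sa(x,y)=y_\sa(x,\phi_{\sa\mu}^{-1}(x,y))$. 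Your ``main obstacle'' paragraph correctly points at this difficulty, but the construction you describe never confronts it: convergence of the sums defining $c_{j,\sa}$ and of their $x$-derivatives is a strictly jet-level statement and neither implies nor substitutes for the contraction of the conjugacy operator. To repair the argument you should either drop the order-by-order scheme and run the fixed-point/graph-transform argument directly on the space of fiber coordinates (the paper's route, which makes your Step 2 computations unnecessary), or supplement your polynomial normal form with a separate non-autonomous Koenigs limit for the flat remainder --- which would force you to prove exactly the uniform $C^{r-2}$ estimates you deferred.
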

\begin{rema}To say that \emph{$(\mathcal D_r)$ holds uniformly among admissible chains} means that there exists $C>0$, $\Lambda>1$ such that for every admissible chain $(\sa_l)_{l=1}^j$, for every $u\in \chi_{\sa_1}$, for all $k\le j$ it holds 
\begin{equation}\tag{$\mathcal D_r$} 
\left\{\begin{array}{c}
\forall z\in Y_{\sa_{1}}\cap f^{-1}(Y_{\sa_{2}}),\; D_zf(\chi_{\sa_{1}}(z)) \text{ is $C$-distant to } \P\R^1\setminus \chi_{\sa_{2}}(f(z)),\\ 
\forall z\in \bigcap_{i=k}^{j} f^{-i+k}(Y_{\sa_{i}}),\; u\in \chi_{\sa_{k}}(z),
\quad\|D_z f^k|u\|^{r-3} \cdot |det \, D_z f^k |\le (C\Lambda ^k)^{-1}.\end{array}\right.\end{equation}
\end{rema} 

The proof of Proposition \ref{linearization} will be done in \textsection \ref{proofof{estimaterenor}}. 

\subsection{Construction of systems of charts}
The aim of the next section is to find $(C^r)$-parameter dependent system of charts with bounded norm while the graph $(\sY , \sT)$ can be unbounded, {\it ie} it can contain admissible chains of arbitrarily length.  Throughout this section $r\ge 4$ is an integer.
\subsubsection{System of charts for hyperbolic sets}
\label{PSMH}
Let $K$ be a \emph{hyperbolic set} of $\mathbb R^2$ for a $C^{r}$-diffeomorphism\footnote{With more care, a similar construction can be done also for endomorphisms.} $f$, with $r\ge 1$. We recall that this means that $K$ is invariant ({\it i.e.} $f(K)=K$), and that there exists a splitting of $T\R^2|K$ into two $Df$-invariant directions $E^s$ and $E^u$ which are respectively contracted and expanded.

For $k\in K$ and $\epsilon>0$, we denote the $\epsilon$-local stable and unstable manifolds by respectively:
\[W^s_\epsilon(k):= \{z\in \R^2:\; \epsilon > d(f^n(z), f^n(k))\to 0 \text{ as } 0\le n\to \infty\}\]
\[W^u_\epsilon(k):= \{z\in \R^2:\; \epsilon > d(f^{-n}(z), f^{-n}(k))\to 0\text{ as } 0\le n\to \infty\}.\]  
They are $C^{r}$-submanifolds. 
\paragraph{System of type A}
For $\delta>0$ small, as the manifolds $W^s_\delta (k)$ depend continuous on $k\in K$, there exists a  continuous family of $C^r$-foliations $(\mathcal F(k))_{k\in K}$ on a neighborhood such that $W^s_\delta(k)$ is a plaque of $\mathcal F(k)$ for every $k\in K$. For instance this can be obtained by considering the foliation $\mathcal F(k)$ whose leaves are the translated of the plaque $W^s_\delta(k)$ in the direction $E^u(k)$.

For $\epsilon <\delta$ small, for every $k\in K=\sY$, let $(y_k(s,0))_{s\in I_k^u}$ be an arc length parametrization of $W^u_{\epsilon}(z)$. Note that the segment $I_k^u$ is close to $[-\epsilon, \epsilon]$. 

For every $x\in I^u_k$, let $(y_k(x,y))_{y\in [-\epsilon, \epsilon]}$ be the arc length parameterization of a $\mathcal F(f)$-plaque containing $y_k(x,0)$, so that $(\partial_x y_k(x,y),\partial_y y_k(x,y))$ is direct. Put $I_k^s:= [-\epsilon, \epsilon]$. Let $\chi$ be a cone field centered at a continuous extension of $E^u$ on a neighborhood of $K$ s.t. $\chi$ is sent into by $Df$.  
Put $\sY:= K$ and let $\sT$ be the graph of $f|K$:  $\sT:= \{(k,f(k))\}_{k\in K}$. 

The family $\mathcal Y(f):= (y_k)_{z\in K=\sY}$ forms with $(\sY,\sT)$ a $C^{r}$-system of charts.
By Proposition \ref{semilinearization} we can assume this system of $C^{r}$-type $A$ by changing the $x$-coordinates.

\paragraph{System of $C^{r}$-type B}
We assume that Proposition \ref{foliation} holds on a neighborhoods $V\subset W$ of $K$. Let $\mathcal F(f)$ be the foliation on $W$ whose leaves are tangent to $e(f)$. We remark that $W^s_\epsilon(k)$ is a plaque of $\mathcal F(f)$ for every $k\in K$. Hence we can take in the above construction the foliation $\mathcal F(k)$ equal to a restriction of $\mathcal F(f)$, in order to obtain a system of type $B$(by pseudo-invariance of the foliation $\mathcal F(f)$). However the foliation is now given by the integration of a vector field of class $C^{r-1}$, hence the obtained system is of class $C^{r}_B$. 

\paragraph{System of $C^{r-2}$-type Linear}
If the map $f$ is Hénon $C^1$-like, we can apply Proposition \ref{linearization} to assume the system of charts Linear and of class $C^{r-2}$ with bounded norm. We remark that we lost two derivatives to have this linearization.
%

\paragraph{Norm and parameter dependence} 
We are going to study the parameter dependence of the above systems of charts of type $A$ (resp. B, resp. Linear). 

For $C^{r}$-perturbation $f'$ of $f$, the hyperbolic set $K$ persists to a hyperbolic compact set $K(f')$ which is homeomorphic to $K$ via a homeomorphism $h(f')$ which conjugates the dynamics of $f'|K(f')$ and $f|K$ and so that (see \cite{Yoyointro}):
\begin{itemize}
\item $h(f)=id|K$.
\item for every $k\in K$, the map $f'\mapsto h(f')(k)$ is of class $C^{r}$.
\end{itemize}
This \emph{hyperbolic continuity} enables us to identify $K(f')$ to $\sY$ and $Graph(f'|K(f'))$ to $\sT$. 
Also, we can construct in the same way systems of charts $(\sY, \sT, \mathcal Y(f'))$ of type $A$ (resp. $B$, resp. linear). Indeed the construction of the sytem of charts depend smoothly on $f$ by hyperbolic continuity (and resp. Proposition \ref{foliation}, and resp. moreover Prop. \ref{linearization}). In other words, the parameter dependent systems
are of type $A$ is of $C^{r}_A$-norm bounded (resp. of type $B$ norm $C^{r}_B$-bounded, resp. Linear of $C^{r-2}$-norm bounded).
\begin{rema} The lost of derivatives does not seem to be avoidable: it already occurs in the Sell Theorem \cite{Sell85} in a more dramatic way as discussed in the introduction, where the hyperbolic compact set is a single fixed point.
\end{rema}

\subsubsection{System of charts of Hénon like maps induced by renormalization intervals of quadratic maps}\label{PSHL}
Let $C>0$, $\lambda>1$ and $P_a(x)=x^2+a$ be a quadratic map so that $a$ belongs to a renormalization parameter interval $\mathcal I\in E_{C\lambda}$, where $E_{C\lambda}$ was defined for Theorem \ref{partialans}. We recall that $I_a$ denotes the renormalization interval of $P_a$ containing $0$. Let $p$ be the period of $I_a$. We recall that the orbit of $I_a$ intersects $[-C,C]$ only at $I_a$ and
\begin{equation}\tag{$\mathcal M^1$} \forall k\ge 0,\; \forall x\in \cap_{i=0}^k P^{-i}([-2,-C]\sqcup [C,2]), \quad  |D(P^k)(x)|>C \Lambda^k.\end{equation}
Hence the compact subset $K_C=\cap_{n\ge 0} P_a^{-n}([-2, 2]\setminus (-C,C))$ is hyperbolic. We notice that $P^k(I_a)$ is included in a gap $\sa_k$ of $K_C$, for every $k\le p$. 
     
Let $\sY:= \{\sa_k,k\le p\}$ and let $\sT:= \{(\sa_k,\sa_{k+1}),\; 0\le k< p\}$.

By $(\mathcal M^1)$, for every $r\ge 1$, there exists $\epsilon>0$ and $\theta>0$, depending only on $C,\lambda$, so that if $\chi$ is the cone field centered at $(1,0)$ with angle $\theta$ on $\R^2$, it holds with $C'<C$, $\Lambda':= \sqrt{\Lambda}$ and $V_\epsilon$ the $10\epsilon$-neighborhood of $([-2,-C]\sqcup [C,2])\times \{0\}$  that for every $\epsilon$-$C^1$-perturbation $f_a$ of $h_{a0}\colon (x,y)\mapsto (x^2+a+y,0)$, we have for all $k\ge 0$ and $z\in \bigcap_{i=0}^{k-1} f_a^{-i}(V_\epsilon )$:
\begin{equation}\tag{$\mathcal M$} \forall u\in \chi(z),\;  \|D_zf^k_a|u\|\ge C'\Lambda'^k\|u\|\end{equation}
\begin{equation}\tag{$\mathcal D_r$} 
\left\{\begin{array}{c}
D_zf_a(\chi(z)) \text{ is $C'$-distant to } \P\R^1\setminus \chi(f_a(z)),\\
\forall u\in \chi(z),  \quad\|D_z f_a^k|u\|^{r-3} \cdot |det \, D_z f_a^k |\le (C'\Lambda'^k)^{-1}.\end{array}\right.\end{equation}

We apply to $f_a$, $V_\epsilon$ and $W_\epsilon$, Proposition \ref{foliation} to define a line field $e(f_a)$ on $W_\epsilon$, for every $f_a$ $\epsilon$-$C^{1}$-close to $h_{a0}$. Let  $\mathcal F(f)$  be the $C^{r-1}$-foliation integrating $e(f_a)$ on $W_\epsilon$. 

We recall that  $\sa_k$ is bounded by two hyperbolic points $\alpha_k,\beta_k$. Hence the points  $(\alpha_k,0)$ and $(\beta_k,0)$ are also hyperbolic for the map $(x,y)\mapsto (x^2+a+y, 0)$ and so they persist to hyperbolic points for $f_a$ (provided that $\epsilon$ is sufficiently small depending only on  $C,\lambda$).

Let $\gamma_{1}(f_a)$ be the segment of $\R\times\{0\}$  close to $\sa_1\times\{0\}$ bounded by the two 
leaves of $\mathcal F(f_a)$ containing the hyperbolic continuation of respectively $(\alpha_1,0)$ and $(\beta_1,0)$. For every $k\le p$, let $\gamma_{k}(f_a)$ be the image by $f_a^{k-1}$ of $\gamma_{1}^u(f_a)$. The curve $\gamma_{k}(f_a)$ has its tangent space in $\chi$. Its end points  are bounded by the leaves $\mathcal L(f_a)$ containing the hyperbolic continuation of $(\alpha_k,0)$ and $(\beta_k,0)$. 
Given $C$ and $\Lambda$, for $\epsilon = \epsilon(C,\Lambda)$ small, it holds for every $k$ that the curve $\gamma_{k}(f_a)$ is $C^{r}$-close to the horizontal in the following meaning: it is the graph of a $C^{r}$-small function $\rho_{k}\in C^{r}(J_k,\R)$ over a segment $J_k$ close to $\sa_k$. 

We now suppose $f_a$ of class $C^r$ and still $C^1$-$\epsilon$-close to $h_{a0}$.

We notice that for every $1\le k<p$, $f_a$ sends   $\gamma_{k}(f_a)$ onto $\gamma_{k+1}(f_a)$.  Let $x_k$ be the point in the interior of $J_k$ so that $f^{p-k}_a(\rho_{k}(x_k))$ is the intersection point of $\gamma_{p}(f_a)$ with $\{0\}\times \R$. 
We remark that  $I^u_{\sa_k}:= J_k-x_k$ contains $0$ in its interior. For $x\in I^u_{\sa_k}$, put $y_{\sa_k}(x,0):= (x_k+x,\rho_{k}(x_k+x))$. We remark that Conditions $(S_0)$ and $(S_1)$ are satisfied.

For every $x\in I^u_{\sa_k}$, let $(y_{\sa_k}(x,t))_{t\in [-\epsilon, \epsilon]}$ be the arc length parameterization of a $\mathcal F(f_a)$-plaque containing $y_{\sa_k}(x,0)$, so that $(\partial_s y_{\sa_k}(x,t),\partial_ty_{\sa}(x,t))$ is direct. Put $I_{\sa}^s:= [-\epsilon, \epsilon]$. 

As in the previous example, by changing the $x$-coordinate thanks to Proposition \ref{semilinearization}, the family $\mathcal Y(f_a):= (y_\sa)_{\sa\in \sY}$ forms with $(\sY,\sT)$ a system of $C^r$-charts of type $B$. It has a bounded $C^r_B$-norm (depending on $C$ and $\Lambda$) for $f$ $C^{r}$-close to $f_a$. 

As before, the construction depends smoothly on $f$. In other words, for every $C^{r}$-family $(f_{\mu})_{\mu \in \M}$ close to $f_a$, the parameter dependent system $(\sY, \sT, \mathcal Y(f_\mu))_{\mu \in \M}$ is of class $C^r_B$ with bounded norm. 

If moreover $f_a$ is a Hénon $C^{1}$-like family, we can change the system to an equivalent one of Linear type and class $C^{r-2}$, with bounded norm.

\section{Renormalization of Hénon like map}\label{algorithm of renormalization}\label{sectapp3}
\subsection{ Fibered Palis-Takens renormalization}
Let $(f_\mu)_{\mu \in \R}$ be a $C^{r}$-family of maps of $\R^2$. Let $(\sY, \sT, \mathcal Y(\mu))_\mu$ be a $C^{r}$-system of charts which is hyperbolic and Linear. 
  
Let $\sa_0,\sa_1\in \sY$ be so that on $D_P(\mu):= y_{\sa_0\mu}^{-1}(Y_{\sa_0\mu}\cap f_\mu ^{-1}(Y_{\sa_1\mu}))$ the map $y_{\sa_1\mu}^{-1}\circ f_\mu \circ y_{\sa_0\mu}$ has the form 
\[y_{\sa_1\mu}^{-1}\circ f_\mu \circ y_{\sa_0\mu} \colon (p+x, y)\in D_P \mapsto  (\xi x^2+\mu  +\gamma \cdot y, q+\zeta \cdot x)+ E_\mu (x, y)\in \R^2,\]    where $p\in int\, I^u_{\sa_0} $, $q\in int\, I^s_{\sa_1} $, $\xi\not=0$,  $\gamma\not=0$ and  $\zeta$ are constants (independent of $\mu$) and $E_\mu \in C^{r}(\R\times \R^2, \R^2)$  satisfies $(\mathcal E_1)$, $(\mathcal E_2)$ and $(\mathcal E_3)$ with $P:= (p,0)$. 

We recall that a chain of symbols $\underline \sa:= (\sa_i)_{i=1}^N\in \sY^N$ is \emph{admissible} if $\st_i=(\sa_i,\sa_{i+1})\in \sT$ for every $i<N$. Then $\lambda_{{\underline \sa}\mu}$ and $\sigma_{{\underline \sa}\mu}$ are defined by:
\[ F_{\underline{\sa}}:= F_{\st_{N-1} } \circ \cdots \circ F_{\st_1}=
y_{\sa_N\mu}^{-1}\circ  f_\mu \circ   y_{\sa_{N-1}\mu}\circ \cdots \circ y_{\sa_2\mu}^{-1}\circ  f_\mu \circ   y_{\sa_{1}\mu}
=:
(x,y)\mapsto ( \sigma_{{\underline \sa}\mu} x, \lambda_{{\underline \sa}\mu} y)\]

The following Theorem is then proven identically as Theorem \ref{PT2}.
\begin{theo}\label{Misurenorm} For every adapted chain of symbols $\underline \sa:= (\sa_i)_{i=1}^N\in \sY^N$ starting at $\sa_1$ and ending at $\sa_N= \sa_0$,  the map $f^{N}_\mu $ from $ W_{0,\mu}:= Y_{\sa_0\mu}\cap \bigcap_{i=1}^N f_\mu ^{-i}(Y_{\sa_i\mu})$ into $Y_{\sa_0\mu}$ is conjugated to the following map which is Hénon $C^r$-like: 
 \[\mathcal R f_{a_\mu}= \Psi_\mu\circ y_{\sa_0\mu}^{-1}\circ  f_\mu ^{N}\circ   y_{\sa_0\mu} \circ \Psi_\mu^{-1},
 \text{  where }\left\{\begin{array}{l} 
 \Psi_\mu(x,y):= \xi\cdot \sigma_{\underline \sa \mu}\cdot (x- p ,  \sigma_{\underline \sa \mu} {\gamma y}-\sigma_{\underline \sa \mu} \lambda_{\underline \sa \mu} q),\\
a_\mu:= \sigma_{\underline \sa \mu}^2 \cdot(\xi \mu +\lambda_{{\underline \sa}\mu}  \xi \gamma q-\frac{\xi p}{\sigma_{\underline \sa \mu}}).\end{array}\right. \]
Moreover, for every $\delta$, if $N$ is large enough in function of the norm and the hyperbolicity of $\mathcal Y$, the norm of $f$, $\gamma$, and $\xi$ then  the family $(\mathcal R f_{a_\mu })_{a_\mu }$ is Hénon $C^{r}$-$\delta$-like.\end{theo}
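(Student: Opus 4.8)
The plan is to follow the proof of Theorem~\ref{PT2} step for step, the only structural difference being that the iterated linear saddle map appearing there is now replaced by the \emph{single} linear map $F_{\underline\sa}$, which is available precisely because the system of charts is assumed Linear. First I would write the return map in the chart $y_{\sa_0\mu}$ by splitting $f_\mu^N=f_\mu^{N-1}\circ f_\mu$ according to the chain: the first factor is the quadratic transition from $\sa_0$ to $\sa_1$, which by hypothesis reads $(p+x,y)\mapsto(\xi x^2+\mu+\gamma y,\,q+\zeta x)+E_\mu(x,y)$, while the remaining $N-1$ factors along the admissible chain $\sa_1,\dots,\sa_N=\sa_0$ compose to the \emph{exactly} linear map $F_{\underline\sa}(x,y)=(\sigma_{\underline\sa\mu}x,\lambda_{\underline\sa\mu}y)$. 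Composing gives an explicit formula for $G_\mu:=y_{\sa_0\mu}^{-1}\circ f_\mu^N\circ y_{\sa_0\mu}$ whose principal (error-free) part is $(p+x,y)\mapsto(\sigma_{\underline\sa\mu}(\xi x^2+\mu+\gamma y),\,\lambda_{\underline\sa\mu}(q+\zeta x))$.

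Next I would conjugate $G_\mu$ by the affine map $\Psi_\mu$ and reparametrize $\mu\mapsto a:=a_\mu$, both as prescribed in the statement. A direct substitution shows that the scaling by $\xi\sigma_{\underline\sa\mu}$ in the first coordinate normalizes the quadratic coefficient to $1$ and turns the $\gamma y$-coupling into $+Y$, while the scaling by $\xi\sigma_{\underline\sa\mu}^2\gamma$ in the second coordinate turns $\zeta x$ into a term $-bX$; the additive constants, which depend affinely on $\mu$, $\lambda_{\underline\sa\mu}$ and $\sigma_{\underline\sa\mu}^{-1}$, are absorbed exactly into the new parameter $a_\mu$. The principal part of $\mathcal R f_{a_\mu}=\Psi_\mu\circ G_\mu\circ\Psi_\mu^{-1}$ is therefore the normalized H\'enon map $(X,Y)\mapsto(X^2+a_\mu+Y,-bX)$, with $b$ proportional to $\sigma_{\underline\sa\mu}\lambda_{\underline\sa\mu}\gamma\zeta$, i.e. (up to bounded chart Jacobian factors) to $\det D f_\mu^N$. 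This already yields that $\mathcal R f_{a_\mu}$ is H\'enon $C^r$-like.

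For the refinement I would show that the leftover terms --- the conjugated error, which enters as $\xi\sigma_{\underline\sa\mu}^2E^1_\mu$ in the first coordinate and $\xi\gamma\sigma_{\underline\sa\mu}^2\lambda_{\underline\sa\mu}E^2_\mu$ in the second --- have arbitrarily small $C^r$-norm on a fixed compact set once $N$ is large. The map $\Psi_\mu^{-1}$ sends any fixed compact $(X,Y,a)$-set into an $O(\sigma_{\underline\sa\mu}^{-1})$-neighbourhood of $P=(p,0)$, where $a$ bounded forces $\mu=O(\sigma_{\underline\sa\mu}^{-2})\to0$. Conditions $(\mathcal E_1)$ and $(\mathcal E_2)$ give $E^1_\mu(P)=0$ for all $\mu$, and $(\mathcal E_3)$ makes the first derivatives of $E^1_0$ and $\partial_{xx}E^1_0(P)$ vanish --- the vanishing of $\partial_{xx}E^1_0(P)$ being exactly what prevents a spurious $O(1)$ contribution to the normalized quadratic coefficient. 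Hence $E^1_\mu$ vanishes to high enough order at $P$ that $\xi\sigma_{\underline\sa\mu}^2E^1_\mu$ stays bounded and its derivatives decay, since each $X$-, $Y$-, resp. $a$-derivative contributes a factor $\sigma_{\underline\sa\mu}^{-1}$, $\sigma_{\underline\sa\mu}^{-2}$, resp. $\sigma_{\underline\sa\mu}^{-2}$ through the chain rule. For the second coordinate one uses $\sigma_{\underline\sa\mu}^2\lambda_{\underline\sa\mu}\approx\sigma_{\underline\sa\mu}\det Df_\mu^N\to0$, which needs $r\ge4$ via $(\mathcal D_r)$. Since $(\mathcal M)$ forces $\sigma_{\underline\sa\mu}\ge C\Lambda^{N-1}\to\infty$ and $(\mathcal D_r)$ forces $b$ and $\lambda_{\underline\sa\mu}$ to $0$, for any $\delta>0$ and $N$ large the family $(\mathcal R f_{a_\mu})_{a_\mu}$ is $C^r$-$\delta$-close to the pure H\'enon family, with small determinant, which is the assertion.

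The hard part will be the uniform $C^r$ control of the conjugated error \emph{including its parameter derivatives}: one must bound $\partial_X^i\partial_Y^j\partial_a^k(\Psi_\mu\circ E_\mu\circ\Psi_\mu^{-1})$ by negative powers of $\sigma_{\underline\sa\mu}$ uniformly in the chain, carefully propagating the hyperbolic estimates and the vanishing conditions $(\mathcal E_1)$--$(\mathcal E_3)$ through the chain rule while tracking the $\mu$-dependence of $\sigma_{\underline\sa\mu}$ and $\lambda_{\underline\sa\mu}$ under the reparametrization (so that $\partial_a\approx\sigma_{\underline\sa\mu}^{-2}\partial_\mu$). This is precisely the estimate carried out for Theorem~\ref{PT2}, so I would invoke that computation rather than reproduce it, the linearity of $F_{\underline\sa}$ only simplifying matters.
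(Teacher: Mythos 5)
Your proposal is correct and is essentially the paper's own proof: the paper establishes Theorem \ref{Misurenorm} by declaring it ``proven identically as Theorem \ref{PT2}'', with the linear chain map $F_{\underline\sa}$ playing the role of the linearized iterates $L^n_\mu$, the affine map $\Psi_\mu$ combining the two coordinate changes used in that proof, and the H\'enon $C^r$-$\delta$-like estimates being exactly those of Proposition \ref{PTestimate}. Your decomposition of $f^N_\mu$ into the quadratic transition followed by the linear chain, the absorption of the constants into $a_\mu$, and your appeal to the computational estimates (the vanishing conditions $(\mathcal E_1)$--$(\mathcal E_3)$ together with the $\sigma_{\underline\sa\mu}^{-1}$, $\sigma_{\underline\sa\mu}^{-2}$ chain-rule factors) reproduce that argument step for step.
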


We saw in the previous section a way to get such $C^{r-2}$-systems of charts which are hyperbolic and linear, by assuming  $(f_\mu)_{\mu \in \R}$ of class $C^{r}$. Hence we lose in this process bounds on 2-derivatives. Actually we do not lose completely $2$-derivatives, since after the renormalization done, we can smooth the renormalization chart in order to recover the lost of these  two derivatives. However we lost the bounds.

We can reach new applications thanks to examples of \textsection \ref{PSMH} and \ref{PSHL}.
\subsection{Applications}

\subsubsection{A lemma to bound from below the Hausdorff dimension of Newhouse parameters} \label{app1}
The following proposition is crucial to show that for every surface diffeomorphism family $(f_a)_a$ for which a non-degenerate homoclinic unfolding of a hyperbolic fixed point $\Omega$ holds, with $|det\, D_{\Omega} f|<1$, then the set of parameters $a$ for which $f_a$ has infinitely many sinks (Newhouse phenomenum) has Hausdorff dimension at least $1/2$ \cite{BedeSi}. 

\begin{lemm}\label{prenh}
Let $(f_\mu)_\mu$ be a Hénon $C^2$-like family, of class $C^4$, such that a non-degenerate homoclinic unfolding holds with a periodic point $\Omega$  in a hyperbolic set $K$ at $\mu=\mu_0$. 

Then there exist $C>0$, $M\in \mathbb N$ and $N\ge 0$ such that for every periodic point $\Omega'$ of period $p'$ close to $\Omega$, a non-degenerate homoclinic unfolding holds with $\Omega'$ at $\mu_0'$ close to $\mu_0$. 

Moreover if $p'\ge M$, by denoting by $\sigma$ the unstable eigenvalue of $D_{\Omega'} f_{\mu_0'}^{p'}$, there exists a parameter interval $I_{\Omega'}$  such that:
\begin{itemize}
\item for every  $\mu\in I_{\Omega'}$, the map $f_\mu$ has an attracting cycle of minimal period $p'+N$,
\item the length of $I_{\Omega'}$ is in $[\sigma^{-2}/C,C \sigma^{-2}]$,
\item the distance between $I_{\Omega'}$  and  $\mu_0'$ is in $[\sigma^{-1}/C, C \sigma^{-1}]$.
\end{itemize}
\end{lemm}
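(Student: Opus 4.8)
The plan is to reduce the statement to a single application of the fibered Palis--Takens renormalization of Theorem \ref{Misurenorm} along the periodic orbit of $\Omega'$, and then to read off the attracting cycle together with the two geometric estimates directly from the explicit affine reparametrization $a_\mu$ displayed in that theorem (exactly as in Remark \ref{preNH}, with the total period-$p'$ multiplier $\sigma$ playing the role that $\sigma^n$ plays there). The whole point is that all of \textsection\ref{PSMH} is designed to make this application uniform in $\Omega'$.

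First I would establish the persistence of the non-degenerate unfolding. Since $\Omega$ lies in the hyperbolic set $K$, which varies $C^r$-continuously by the hyperbolic continuity recalled in \textsection\ref{PSMH}, every periodic point $\Omega'\in K$ close to $\Omega$ has local invariant manifolds that are $C^1$-close, on the relevant compact pieces, to those of $\Omega$. Hence the tangency point $P$, the excursion time $N$, and the tangency between $f^N_0(W^u_{loc}(\Omega))$ and $W^s_{loc}(\Omega)$ all have counterparts for $\Omega'$; the quadratic non-degeneracy (the coefficient $\xi\neq 0$) and the transversality of the unfolding ($\partial_\mu\neq 0$) are open, so they survive, and solving for the tangency parameter produces $\mu_0'$ near $\mu_0$ with excursion data $\xi,\gamma,\zeta$ and integer $N$ uniformly close to those of $\Omega$. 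Because $(f_\mu)_\mu$ is H\'enon $C^2$-like of class $C^4$, the determinant is uniformly small, so the dissipative condition $(\mathcal D_r)$ with $r=4$ holds on a neighbourhood of $K$; the construction of \textsection\ref{PSMH} (Propositions \ref{foliation}, \ref{semilinearization}, \ref{linearization}) then yields a $(C,\Lambda)$-hyperbolic Linear $C^{2}$-system of charts adapted to $(f_\mu)_\mu$, with norm bounded uniformly in the data of $K$.

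Next I would form the admissible chain $\underline{\sa}$ that runs once around the orbit of $\Omega'$, from the chart near $Q$ to the chart near $P$, and apply Theorem \ref{Misurenorm} to this chain composed with the excursion $f^N$. Its total unstable multiplier $\sigma_{\underline\sa\mu}$ equals, up to a bounded factor, the eigenvalue $\sigma$ of $D_{\Omega'}f^{p'}_{\mu_0'}$, and its length is $p'$ up to an additive constant that I absorb into $N$. By uniform hyperbolicity of $K$ one has $\sigma\ge\Lambda^{p'}$, so requiring $p'\ge M$ makes $\sigma$ as large as needed; the renormalized family $\mathcal Rf_{a_\mu}$ is then H\'enon $C^r$-$\delta$-like for a fixed small $\delta$, hence $C^2$-close to $(x,y)\mapsto(x^2+a+y,0)$, and so has a hyperbolic attracting fixed point for all $a_\mu$ in a fixed window $[a_-,a_+]$. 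Pulling this fixed point back through the conjugacy of Theorem \ref{Misurenorm} gives an attracting cycle of $f_\mu$ of period $p'+N$, whose minimality follows from the single large excursion. Finally, the affine reparametrization $a_\mu=\sigma^2(\xi\mu+\lambda\xi\gamma q-\xi p/\sigma)$ shows that $I_{\Omega'}=\{\mu:\ a_\mu\in[a_-,a_+]\}$ has length comparable to $\sigma^{-2}$, while its centre solves $\xi\mu+\lambda\xi\gamma q-\xi p/\sigma=0$, i.e.\ $\mu\approx p/\sigma$ (the term $\lambda\gamma q$ decaying faster), putting $I_{\Omega'}$ at distance comparable to $\sigma^{-1}$ from $\mu_0'$.

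The main obstacle is uniformity: the constants $C,M,N$ must not depend on which $\Omega'$ is chosen, only on $(f_\mu)_\mu$ and $K$. This is precisely what the apparatus of \textsection2 buys, since it produces systems of charts and renormalizations with norms bounded uniformly over admissible chains of arbitrary length; combined with the uniform closeness of the excursion constants $\xi,\gamma,\zeta$ and of $N$ to those of $\Omega$ (from hyperbolic continuity), this delivers the single constant $C$ and the thresholds $M,N$ valid for \emph{all} periodic $\Omega'$ near $\Omega$ simultaneously. The delicate bookkeeping is ensuring that the additive $O(1)$ discrepancy between the chain length and $p'$, and the bounded-factor discrepancy between $\sigma_{\underline\sa\mu}$ and $\sigma$, can genuinely be absorbed into $N$ and $C$ respectively, which is where the uniform bounds of Theorem \ref{Misurenorm} are indispensable.
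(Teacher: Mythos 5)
Your proposal is correct and follows essentially the same route as the paper: persistence of the non-degenerate unfolding for $\Omega'$ via hyperbolic continuity, the Linear system of charts of \textsection\ref{PSMH} built from Propositions \ref{foliation}, \ref{semilinearization} and \ref{linearization}, then one application of Theorem \ref{Misurenorm} along the chain around the orbit of $\Omega'$, reading off the length $\sim\sigma^{-2}$ and distance $\sim\sigma^{-1}$ from the affine reparametrization exactly as in Remark \ref{preNH}. Your added bookkeeping (the bounded-factor discrepancy between $\sigma_{\underline\sa\mu}$ and $\sigma$, and the justification of $(\mathcal D_r)$ via the small determinant) only makes explicit what the paper leaves implicit.
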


\begin{proof} 
Let $(\sY, \sT, \mathcal Y(\mu))_\mu$ be the linear $C^2$-system of charts  associated to $K$ given by \textsection \ref{PSMH}.  We recall that it has a bounded norm and that $\sY=K$, $\sT= Graph(f_{\mu_0}|K)$.

For $\epsilon$ small enough, $W^s_\epsilon(k)$ and $W^u_\epsilon(k)$ are included in the interior of $Y_{k}$ for every $k\in K$.

Let $W^s_{loc}(\Omega)$ and $W^u_{loc}(\Omega)$ be two local stable and unstable manifolds of $\Omega$ which are tangent at $\mu= \mu_0$ and unfold non-degenerately.  Then for every periodic point $\Omega'$ close to $\Omega$, there are local stable and unstable manifolds $W^s_{loc}(\Omega')$ and $W^u_{loc}(\Omega')$ close to $W^s_{loc}(\Omega)$ and $W^u_{loc}(\Omega)$. Hence $W^s_{loc}(\Omega')$ and $W^u_{loc}(\Omega')$ are tangent at $\mu= \mu_0'$ close to $\mu_0$  and unfold non-degenerately.

As in section \ref{PTrev}, by non-degenerate homoclinic unfolding between $W^s_{loc}(\Omega')$ and $W^u_{loc}(\Omega')$ there exist:
\begin{itemize}
\item $N\ge 1$ which does not depend on $\Omega'$ close to $\Omega$,
\item a point $P$ in $W^u_{loc}(\Omega')$ and a point $Q\in W^s_{loc}(\Omega')$, with coordinates respectively $(p,0)$ and $(0,q)$ in the chart $y_{\Omega'}$,
\item  a neighborhood $D_P$ of $P$ in $Y_{\Omega'}$ sent by $f^N$ into $Y_{\Omega'}$,
\end{itemize}
 so that in the chart given by $y_{\Omega'}$ $f_{\mu}^N|D_P$ has the following form:
\[ (p+x, y)\in D_P\mapsto  (\xi x^2+\alpha(\mu-\mu'_0) +\gamma \cdot y, q+\zeta \cdot x)+ E_\mu(p+x, y)\in D\]       
where  $\alpha$, $\zeta$, $\xi$ and $\gamma$ are non-zero constants (independent of $\mu$), and $E_\mu=(E_\mu^1,E_\mu^2)\in C^{r}(\R\times \R^{2}, \R^2)$ satisfies $(\mathcal E_1-\mathcal E_2-\mathcal E_3)$. 

Hence we can apply Theorem \ref{Misurenorm}, if the period $p$ is sufficiently large (greater than a certain $M$). We obtain the conclusion of Lemma \ref{prenh} as we got remark \ref{preNH}.  
\end{proof}

\subsubsection{Proof of Theorem \ref{partialans} on the parameter domain size of Hénon renormalization}\label{app2}
Let $P_a(x)=x^2+a$. For $C>0$ and $\Lambda>1$, let $E_{C\lambda}$ be the subset of renormalization parameter intervals $\mathcal I$ defined for Theorem \ref{partialans}. 

We want to show that there exists  $b_0>0$ such that for every $\mathcal I\in E_{C\lambda}$, the domain $\hat {\mathcal I}_{b_0}$ is a closed strip which stretches across $\R\times [-b_0,b_0]$. 

From \cite{Ha11}, for every $N>0$, there exists $b_N>0$ such that for every $\mathcal I\in E_{C\lambda}$ of period less than $N$, the domain $\hat {\mathcal I}_{b_0}$ is a closed strip which stretches across $\R\times [-b_N,b_N]$. 

Hence it remains to study the case of intervals $\mathcal I$ of large periods $p$. For this end we use the system of charts defined in \textsection \ref{PSHL} for $r\ge 4$. We know that there exists $\epsilon=\epsilon(C,\Lambda)>0$ depending only on $C,\lambda$, such that with $\M$ the $\epsilon$-neighborhood of $\mathcal I\times \{0\}$, the system of charts $(\sY,\sT,\mathcal Y(f_{ab}))_{ab\in \M}$ is linear for the Hénon map $f_{ab}(x,y)=(x^2+a+y,-bx)$ and depends $C^r$-smoothly on $ab\in \M$.  

Let $a_\star\in \mathcal I$ be such that $0$ is super attracting of period $p$.  Then we have the form:
\[ y_{\sa_1{a_\star 0}}\circ f_{a_\star 0}\circ y_{\sa_0{a_\star 0}}^{-1}\colon (x,y)\mapsto (\xi x^2+\gamma y, 0)+(A(x,y),0),\]
 with $A$ satisfying $A(0)=\partial_x A(0)=\partial_x^2 A(0)=0$,  of class $C^{r-2}$-bounded and norm bounded in function of $C,\lambda$. 

Hence for $a\in \mathcal I$, we have the form:
\[ y_{\sa_1{a0}}\circ f_{a0}\circ y_{\sa_0{a0}}^{-1}\colon (x,y)\mapsto (\xi x^2+\gamma y+\alpha (a-a_\star), 0)+E_{a,0}(x,y),\]
with $E$ of class $C^{r-2}$, of bounded norm (depending only on $C,\Lambda$) and satisfying $(\mathcal E_1-\mathcal E_2-\mathcal E_3)$ at $(a,b)=(a_\star, 0)$. Also $(\mathcal E_1)$ holds for $\mu= a-a_\star$.

The charts $y_{\sa_1}$ and $y_{\sa_0}$ depends $C^{r-2}$-smoothly on $a,b$, with uniform bounds in function of $C,\Lambda$. Moreover their image is uniformly high in function of $C,\Lambda$.  

Consequently, for $\epsilon$ small only in function of $C,\Lambda$ for every $(a,b)\in \M$,  we have the form:
\[F_{ab} := y_{\sa_1{ab}}\circ f_{ab}\circ y_{\sa_0{ab}}^{-1}\colon (x,y)\mapsto (\xi x^2+\gamma y+\alpha (a-a_\star), 0)+E_{a,b}(x,y),\]
where $a\mapsto E_{ab}$ is $C^{r-2}$-close to $a\mapsto E_{a0}$, for every $\epsilon$ small only in function of $C,\Lambda$.

For every $|b|<\epsilon$, let $p_{b}\approx 0$ be such that the 1st coordinate of $\partial_x F_{a_\star b}$ vanishes at $(p_{a_\star b}, 0)$. 
By a Taylor development, there exists $q_{b}\approx 0$, $\xi_{b}\approx \xi$, $\gamma_{b}\approx \gamma$, $\alpha_{b}\approx \alpha$,  $\zeta_{b}\approx 0$, $a_b\approx a_\star$, and $(E'_{ab})_a$ $C^{r-2}$-close to $(E_{a,0})_a$ and satisfying $(\mathcal E_1-\mathcal E_2-\mathcal E_3)$ such that for $(a,b)\in \M$:
\[F_{ab} := y_{\sa_1{ab}}\circ f_{ab}\circ y_{\sa_0{ab}}^{-1}\colon (p_{b} +x,y)\mapsto (\xi_{b} x^2+\gamma_{b} y+\alpha_{b} (a-a_{b}), q_{b} +\zeta_{b} x)+E'_{ab}(x,y),\]

We recall that $Y_{\sa_0}\cap \R\times \{0\}$ 
contains the $C$-neighborhood of $0$. Also the height of  $Y_{\sa_1}(ab)$ is uniformly long. As the the norm of $\mathcal Y$ is bounded (in function of $C,\Lambda$), it comes that $p_b$ belongs to $I^u_{\sa_0}(ab)$,  $q_b$ belongs to $I^s_{\sa_1}(ab)$, if $|b|$ is smaller than a function $\epsilon$ of $C$ and $\Lambda$.

Therefore we can apply Theorem \ref{Misurenorm} to get a prerenormalization of $(f_{a,b})_a$ for every $|b|<\epsilon $.
From the form of the parameter dependence of the renormalization we have then for every $b\in(-\epsilon, \epsilon)$ the existence an interval $\mathcal I_b$ depending continuously of $b$ for which the Hénon map is renormalizable with the same period.  
We put $\hat {\mathcal I}:= \cup_{b\in (-\epsilon,\epsilon)}\mathcal I_b\times \{b\}$.

\subsubsection{Sketch of an application to a Conjecture of Hénon}\label{app3}

As mentioned in the introduction, a famous conjecture of Hénon \cite{He} states that the map : $(x,y)\mapsto (1-1.4 x^2+y,0.3 x)$  has a non-uniformly hyperbolic attractor. In dimension 1, the renormalization techniques are very useful to decide if a unimodal map is non-uniformly hyperbolic (\cite{AvilaGugu03}). It is often used along a hyperbolic chain. 

We notice that the determinant has absolute value equal to $0.3$, whereas a numerical experimentation gives that the Lyapunov exponent should be greater than $\Lambda= \exp(0.419)$. For special chart (like Pesin ones), we should be able to find a hyperbolic system of charts 
which is $\Lambda$-hyperbolic, and such that $y_{\sb\mu}^{-1}\circ f_\mu\circ y_\sa$ has norm approximately $\Lambda$ and determinant approximately $0.3$. Hence the inequality $\|Df\|^{r-3}\cdot |det\, Df|<1$ is approximately equivalent to $0.3 \cdot  \Lambda ^{r-3} <1$ and so to:
\[ r< 3-\frac{\ln\, 0.3}{0.419}\approx 5.873443446 \]
This gives some room to apply the linearization (we need only $r=4$ to get $C^2$-conjugacy) and the pre-renormalization techniques of Theorem \ref{Misurenorm} to this context.

\section{Proof of Renormalizations (Theorems \ref{PT2} and  \ref{Misurenorm})}\label{Proof of Palis-Takens }
As both proofs are the same, we will do only the one of Theorem \ref{PT2}.

We can assume that $N=1$ by considering the iterate $f^N_\mu$ instead of $f_\mu$. 

Let us simplify the expression of $f_\mu|D_P$ by doing a coordinates change with the linear map:
\[ (x,y)\mapsto (\xi\cdot x, {\xi\gamma}\cdot  y).\]
In these new coordinates $P=(\xi p,0)=:(p',0)\quad\text{and}\quad Q=(0,\xi \gamma q)=(0,q')$.
Also the expression of $f_\mu|D_P$ has the form:
\[f_\mu|D_P\colon P+ (x, y)\mapsto  Q+(x^2+ \nu+y , -b'x)+ E'_\mu (P+(x, y))\in D_Q\]
with $\nu= \xi\cdot \mu$, $-b':= \zeta\gamma$, and $E'_\mu=(A'_\mu,b'\cdot B'_\mu)$ a family of maps which satisfies $(\mathcal E_1-\mathcal E_2-\mathcal E_3)$.

For every $n\ge 0$, let $D_P'$ be the domain of definition of 
\[\phi_{\mu,n}:=(f_\mu|D)^n\circ (f_\mu|D_P).\]

We recall that on its definition domain $(f_\mu|D)^n$ is equal to:
\[L^n_\mu (x,y)=(\sigma^n_\mu x,\lambda^n_\mu y),\]
and so:
\[\phi_{\mu,n}(p'+x, y)=\Big(\sigma^n_\mu x^2+\sigma^n_\mu \nu + \sigma^n_\mu y, \lambda^n_\mu q' - b' \lambda^n_\mu x\Big)+ L^n_\mu\circ E'_\mu(p'+x, y).\]

Using the change of coordinates:
\[\left\{\begin{array}{ccc}
C_\mu\colon (x,y)&\mapsto& \big(\sigma_\mu^n ( x-p'), \sigma_\mu^{2n}  (y-r)\big)\\
C_\mu^{-1}\colon (x,y)&\mapsto& \big( \sigma_\mu^{-n} x+p', \sigma_\mu^{-2n}  y+r\big)\end{array}\right\} \quad \text{with }r:= \lambda_\mu^n q'.\]
It holds for $(x,y)\in  C_\mu(D_P')$: 
\begin{multline*}\phi_{\mu,n}\circ C_\mu^{-1}  (x,y)= \Big(\sigma_\mu^{-n} x^2+\sigma_\mu^n \nu + \sigma^{-n}_\mu y +\sigma_\mu^n r, \lambda_\mu^nq' -b' \lambda_\mu^n\sigma_\mu^{-n} x\Big)+L_\mu^n\circ E'_\mu\circ  C_\mu^{-1}(x,y)\end{multline*}
and so :
\begin{multline*}C_\mu\circ \phi_{\mu,n}\circ C_\mu^{-1}  (x,y)= \Big(x^2+\sigma_\mu^{2n} \nu +  y +\sigma_\mu^{2n} r-\sigma_\mu^np', \lambda_\mu^n\sigma_\mu^{2n}q'-\sigma_\mu^{2n}r-b' \lambda_\mu^n\sigma_\mu^{n} x\Big)\\
+ DC_\mu\circ  L_\mu^n\circ E'_\mu\circ  C_\mu^{-1}(x,y)\end{multline*}

We reach the form:
\[\mathcal R f_a:= (x,y)\mapsto \Big(x^2+a +  y , -b x\Big) + (A_a,b\cdot B_a)(x,y)\]
with  $b= b' \lambda^n_0\sigma_0^{n}$, $a= \sigma_\mu^{2n} \nu+\sigma_\mu^{2n}\lambda_\mu^n q'-\sigma_\mu^np'$, and:
\[A_a(x,y) =\sigma_\mu^{2n} A_\mu'\circ C_\mu^{-1}(x,y),\quad B_a(x,y)=\frac{\lambda_\mu^n\cdot \sigma_\mu^{2n}}{\lambda_0^n\cdot \sigma_0^{n}}  B_\mu'\circ C_\mu^{-1}(x,y)+(1-\frac {b'}b \lambda_\mu^n\cdot \sigma_\mu^{n})x \]

The family $\mathcal R f_\mu$ is the \emph{renormalization} of $f_\mu$. 

\begin{prop}\label{PTestimate} The family $(\mathcal R f_a)_a$ is Hénon $C^r$-like.
Moreover, for every $\delta>0$, if $n$ is large, this family is Hénon $C^r$-$\delta$-like. 
\end{prop}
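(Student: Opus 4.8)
The goal is to show that the family $(\mathcal R f_a)_a$ obtained by the explicit conjugation is H\'enon $C^r$-like, and moreover $C^r$-$\delta$-like once $n$ is large. The plan is to read off the conclusion directly from the closed form just derived, namely
\[\mathcal R f_a\colon (x,y)\mapsto (x^2+a+y,-bx)+(A_a,b\cdot B_a)(x,y),\]
and to verify the two structural requirements of the definition: first that the perturbation term $(A_a,b\cdot B_a)$ is genuinely a $C^r$ family, and second that its $C^r$-norm on any fixed compact set can be made smaller than $\delta$ by taking $n$ large. Thus the whole proposition reduces to estimating the $C^r$-size of the two explicit expressions
\[A_a(x,y)=\sigma_\mu^{2n}A'_\mu\circ C_\mu^{-1}(x,y),\qquad B_a(x,y)=\frac{\lambda_\mu^n\sigma_\mu^{2n}}{\lambda_0^n\sigma_0^{n}}B'_\mu\circ C_\mu^{-1}(x,y)+\Bigl(1-\tfrac{b'}{b}\lambda_\mu^n\sigma_\mu^{n}\Bigr)x.\]

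First I would handle $A_a$. The key point is that $C_\mu^{-1}$ contracts aggressively: its linear part multiplies $x$ by $\sigma_\mu^{-n}$ and $y$ by $\sigma_\mu^{-2n}$, so on a fixed compact set in the $(x,y)$-variables the argument $C_\mu^{-1}(x,y)$ lies in a neighborhood of $P=(p',0)$ of radius $O(\sigma_\mu^{-n})$. Now the conditions $(\mathcal E_2)$ and $(\mathcal E_3)$ say precisely that $A'_\mu=E_\mu'^1$ vanishes to second order at $P$ when $\mu=0$: $E_0(P)=0$, $\partial_x E_0^1(P)=\partial_y E_0^1(P)=\partial_{xx}E_0^1(P)=0$. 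Together with the control on $\mu=M_n(\tilde\mu)\to 0$, a Taylor expansion of $A'_\mu$ at $P$ shows that $A'_\mu\circ C_\mu^{-1}$ is of size $O(\sigma_\mu^{-3n})$ on the compact set, with comparable control on derivatives up to order $r$ (each $x$-derivative that falls on $C_\mu^{-1}$ contributes a factor $\sigma_\mu^{-n}$, each $y$-derivative a factor $\sigma_\mu^{-2n}$, and differentiating the remainder lowers the order of vanishing). Multiplying by the prefactor $\sigma_\mu^{2n}$ still leaves a net decay like $\sigma_\mu^{-n}$, which tends to $0$ since $\sigma_0>1$. The derivatives in $x$ and $y$ are even better behaved because they carry extra negative powers from $C_\mu^{-1}$; the only derivatives one must watch are those in the parameter direction, which is where $(\mathcal E_1)$, the condition $\partial_\mu E_\mu^1(P)=0$, is used to kill the would-be dominant parameter derivative.

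For $B_a$, the first summand is treated exactly as $A_a$: the prefactor $\lambda_\mu^n\sigma_\mu^{2n}/(\lambda_0^n\sigma_0^n)$ is comparable to a bounded constant times the one for $A_a$, and $B'_\mu=E_\mu'^2/b'$ satisfies $\partial_x E_0^2(P)=0$ by $(\mathcal E_3)$, giving at least one order of vanishing, so again the term is $C^r$-small after the contraction by $C_\mu^{-1}$. The only genuinely different contribution is the affine term $\bigl(1-\tfrac{b'}{b}\lambda_\mu^n\sigma_\mu^{n}\bigr)x$; here I would simply note that $b=b'\lambda_0^n\sigma_0^n$ by definition, so $\tfrac{b'}{b}\lambda_\mu^n\sigma_\mu^n=(\lambda_\mu/\lambda_0)^n(\sigma_\mu/\sigma_0)^n\to 1$ as $\mu\to 0$ (using that $(\lambda_\mu,\sigma_\mu)$ depends continuously on $\mu$ and $\mu=M_n(\tilde\mu)\to 0$ uniformly on the relevant compact parameter set), so the coefficient $1-\tfrac{b'}{b}\lambda_\mu^n\sigma_\mu^n$ tends to $0$, making this term $C^r$-small as well. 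Assembling these estimates gives $\|A_a\|_{C^r},\|B_a\|_{C^r}<\delta$ on any prescribed compact set once $n$ is large, which is exactly the $C^r$-$\delta$-like conclusion; dropping the smallness gives the plain $C^r$-like statement.

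The main obstacle, and the step deserving the most care, is the bookkeeping of mixed derivatives $\partial_x^i\partial_y^j\partial_\mu^k$ of the composite $E'_\mu\circ C_\mu^{-1}$, balancing the expanding prefactors $\sigma_\mu^{2n}$ (and $\lambda_\mu^n\sigma_\mu^{2n}$) against the contracting factors produced by the chain rule through $C_\mu^{-1}$ and against the orders of vanishing supplied by $(\mathcal E_1)$--$(\mathcal E_3)$. The delicate cases are the high-order pure $x$-derivatives of the first coordinate, where only a second-order vanishing is available from $(\mathcal E_3)$, and the parameter derivatives, where one must confirm that no derivative combination escapes the available cancellation; verifying that in every case the net power of $\sigma_\mu$ stays negative (equivalently that the worst term is $O(\sigma_\mu^{-n})$) is the computational heart of the argument. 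Everything else is the routine observation that $C^r$ convergence of families on compacta is exactly what the definition of H\'enon $C^r$-$\delta$-like demands.
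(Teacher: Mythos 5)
Your overall strategy is the same as the paper's: write $\mathcal R f_a=(x^2+a+y,-bx)+(A_a,b\cdot B_a)$ and estimate the $C^r$-norm of $A_a=\sigma_\mu^{2n}A'_\mu\circ C_\mu^{-1}$ and of $B_a$ directly, using $(\mathcal E_1)$--$(\mathcal E_3)$ together with the contraction of $C_\mu^{-1}$ and the chain rule. However, there is a genuine gap in your key estimate for $A_a$. You assert that $(\mathcal E_2)$ and $(\mathcal E_3)$ say that $E_0^1$ ``vanishes to second order'' at $P$; they do not. The assumptions kill the value, the first derivatives and $\partial_{xx}E^1_0(P)$ only: nothing is assumed about $\partial_{xy}E^1_0(P)$ and $\partial_{yy}E^1_0(P)$. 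Consequently your claimed bound $A'_\mu\circ C_\mu^{-1}=O(\sigma_\mu^{-3n})$ is false, and the estimate you can actually get from an isotropic Taylor expansion (radius $O(\sigma_\mu^{-n})$, vanishing only to first order plus $\partial_{xx}$) is $A'_\mu\circ C_\mu^{-1}=O(\sigma_\mu^{-2n})$, which after multiplication by the prefactor $\sigma_\mu^{2n}$ gives $O(1)$ --- not smallness. So as written your argument does not even establish the $C^0$-bound.

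The repair needs two ingredients that never appear in your write-up. First, the anisotropy of $C_\mu^{-1}(x,y)=(\sigma_\mu^{-n}x+p',\sigma_\mu^{-2n}y+r)$ with $r=\lambda_\mu^n q'$: the displacement from $P$ in the $y$-direction is $O(\lambda_\mu^n+\sigma_\mu^{-2n})$, much smaller than $\sigma_\mu^{-n}$, so each occurrence of $v$ in the Taylor terms $\partial_{xy}uv$ and $\partial_{yy}v^2$ carries this smaller factor. Second, the dissipation hypothesis $\lambda_0\sigma_0<1$ from condition $(\mathcal R)$: after multiplying by $\sigma_\mu^{2n}$, those two terms are of order $(\sigma_\mu\lambda_\mu)^n$ and $(\sigma_\mu\lambda_\mu)^{2n}$ respectively (the contribution of the offset $r=\lambda_\mu^n q'$), and they tend to $0$ only because $\lambda_\mu\sigma_\mu<1$. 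This is exactly how the paper's computation proceeds: it tracks the two directions separately (each $x$-derivative through $C_\mu^{-1}$ yields $\sigma_\mu^{-n}$, each $y$-derivative yields $\sigma_\mu^{-2n}$), isolates the single power-neutral term $\partial_x^2A_a$ and kills it by $(\mathcal E_3)$ plus continuity near $P$ --- the one mechanism you did identify correctly --- and invokes $|\lambda_\mu\sigma_\mu|<1$ where the $\lambda_\mu^n$ factors appear (e.g.\ in the bounds for $\partial_aB_a$, where the paper notes explicitly that $|\lambda_\mu/(\sigma_\mu\lambda_0\sigma_0)|<1$). Your treatment of the affine term $\bigl(1-\tfrac{b'}{b}\lambda_\mu^n\sigma_\mu^n\bigr)x$ and of the prefactors in $B_a$ is in line with the paper, but the missing dissipation argument is not a cosmetic issue: without it the statement is simply not true, since a nonzero $\partial_{yy}E_0^1(P)$ would contribute a term of size $(\sigma_\mu\lambda_\mu)^{2n}q'^2$ to $A_a$.
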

The proof of this Proposition is purely analytical and so it is postponed to section \ref{proofofpropPTestimate}.

\section{Proof of  normal forms of system of charts}
\subsection{Pseudo invariant stable foliation (Proof of Proposition \ref{foliation})}\label{Pseudo invariant stable foliation}
Let $f\in \M$ be satisfying the hypotheses of Proposition \ref{foliation}.

 The field $e(f)$ is found as an invariant, $(r-1)$-normally expanded graph of a function 
in $C^r(W\times \M, \P\R^1)$,  where $\P\R^1$ is the real one-dimensional projective space. Let us first construct the dynamics 
 which will normally expand this graph. First let us notice that $(\mathcal D_r)$ remains valid on a small neighborhood $V_1$ of $cl(V)$.

Let $u$ be a smooth, unit vector field in $\chi$ over $W$. We observe that for every $z\in V$, the inner product $D_zf(u(z))\cdot u(f(z))$ is not zero. Let $sgn(z)\in \{-1,1\}$ be its sign. 
Let $u_\bot$ be a smooth, unit vector field orthogonal to $u$ over $W$. 

We denote by $u^* $ the one form equal to $1$ at $u$ and zero on its orthogonal complement. Similarly we define the form $u_\bot^{*}$.
Let $V_2$ be a small neighborhood of $V_1$ in $W$ and $\rho_1\in C^\infty(W, [0,1])$ a bump function equal 
to $1$ at $V$ and $0$ on $V_1^c$, and  $\rho_2\in C^\infty(W, [0,1])$ a bump function equal 
to $1$ at $V_1$ and $0$ on $V_2^c$. We extend continuously $sgn$ to $V_2$ with value in $\{-1,1\}$. 
We notice that $D_zf:=  u\cdot u^* \circ D_zf +  u_\bot\cdot u_\bot^* \circ D_zf$. Put:
\[\mu(z):= \sup_{\|v\|=1}\frac{|u_\bot^* \circ D_zf(v)|^2}{|u^* \circ D_zf(v)|^2} \quad \text{and}\quad \tau(z):= \sqrt{(1-\rho_1^2(z) )\mu(z)+1}.\]  
 \[ \phi_f(z):=   sgn(z)\cdot\tau(z) \cdot \rho_2(z)\cdot  u\cdot u^* \circ D_zf +  (1-\rho_2(z))\cdot   u\cdot u^*(z)+sgn(z)\cdot \rho_1(z)\cdot  u_\bot\cdot u_\bot^* \circ D_zf.\]
 
We notice that on $V_1$, it holds $ \phi_f(z)=  sgn(z)\cdot \tau(z)\cdot  u\cdot u^* \circ D_zf +sgn(z)\cdot \rho_1(z) u_\bot\cdot u_\bot^* \circ D_zf$. Hence it holds on $V_1$:
\[|det(\phi_f(z))|= \rho_1(z)\tau(z) |det( D_zf)|\le |det( D_zf)|
\quad \text{and}\quad 
\|\phi_f(z)\|\ge  \|Df(z)\|\]
Indeed a computation gives that for every $z$, $\rho_1(z)\tau(z) \le 1$. Moreover, since the vectors in $\chi$ are the most expanded (see remark \ref{rema2.9}), there exists a constant $C'>0$ such that for every $k$, for every $z\in \bigcap_{i=0}^{k-1} f^{-i}(V_1)$, it holds that:
\[\|\phi_f^k (z,u)\|\ge C'\|D_zf^k\|.\]

On the complement of $V_1$, we have $\phi_f(z):=  sgn(z)\cdot \tau(z)\cdot \rho_2(z) \cdot u\cdot u^* \circ D_zf +  (1-\rho_2(z))  \cdot u\cdot u^*(z)$. Then it holds: 
\[|det(\phi_f(z))|=0\;.\]
Consequently, it holds for every $-1\le i \le r-3$, for every $k\ge 0$, every $z\in W$:
\[ |det(\phi^k_f(z))|\cdot \|\phi_f^k (u(z))\|^{-2}\cdot \|D_z f^k\|^{i+2}\le C'^{-2} |det(\phi^k_f(z))|\cdot \|D_z f^k\|^{i}\]
By  $(\mathcal D_r)$ of remark \ref{rema2.9}, it comes for every $1\le i\le r-1$: 
 \begin{equation}\label{Drpri} |det(\phi^k_f(z))|\cdot \|\phi_f^k (u(z))\|^{-2}\cdot \|D_z f^k\|^{i}\le C'^{-2}(C\Lambda^k)^{-1}\end{equation}

The action $\tilde \phi_f$ of the map $\phi_f$ on $\cup_{z\in W} \P\R^1\cap \chi(z)$ is  well defined and of class $C^r$. Moreover it sends the closure $\chi(z)$ into  $\chi(f(z))$, and so $\tilde \phi_f|\chi$ is contracting for the Hilbert metric associated to $\chi$. However, we cannot conclude to a fixed section of $\chi$ since $f$ is possibly not bijective on the base $W$. 

Moreover $\tilde \phi_f$ is not well defined on all $\P\R^1$ due to the singularities. 

Nevertheless the action $\tilde \phi_f(z)^{-1}$ of the map $\phi_f(z)^{-1}$ is well defined on $\P\R^1\setminus \chi({f(z)})$, and sends it into $\chi^c(z):=\P\R^1\setminus \chi({z})$, for every $z$. Moreover the following map is of class $C^{r-1}$.
 \[W\times \M\times \P\R^1\supset \bigsqcup_{(z,f)\in W\times \M}\chi^c(z)\ni (z,f,v) \mapsto \tilde \phi_f^{-1}(z)(v)\in \bigsqcup_{(z,f)\in W\times \M}\chi^c(z)\subset W\times \M\times \P\R^1\]

Let $\Gamma$ be the space of $C^{0}$-maps $W\times \M\ni (z,f)\mapsto v(z,f)\in \R\P^1$ such that $v(z,f)$ is in the complement of $\chi(z)$ for every $z\in W$.
Let us consider the graph transform:
\[\Phi\colon \Gamma \ni v \mapsto [(z,f)\mapsto \tilde \phi_f^{-1}(z)\circ v(f(z),f)]\in \Gamma.\]
By the mapping cone property of $\chi$,  the map $\Phi$ has a contracting iterate for the $C^0$-distance.  Let us denote its fixed point by:
\[W\times \M\ni (z,f)\mapsto e(f)(z)\in \P\R^1\]
The following Claim  implies immediately Proposition \ref{foliation}.
\begin{Claim}\label{regularitedee} The map $(z,f)\mapsto e(f)(z)$ is of class $C^{r-1}$, with norm bounded in function $C,\Lambda$, and the $C^r$-norms of $f\in \M$. 
\end{Claim}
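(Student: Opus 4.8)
The plan is to establish Claim \ref{regularitedee} as a regularity statement for the invariant section $e(f)$ of the normally contracting graph transform $\Phi$. The fixed point $e(f)$ exists by the contraction argument already given, so what remains is purely its smoothness in $(z,f)\in W\times\M$. The natural tool is the $C^{r-1}$-section theorem (the fiber-contraction / invariant-section theorem of Hirsch-Pugh-Shub), whose hypotheses are: (i) the fiber map $\tilde\phi_f^{-1}(z)$ acting on the projective fibers $\chi^c(z)$ is a fiberwise contraction, uniformly in the base; and (ii) the base dynamics $(z,f)\mapsto(f(z),f)$ expands, relative to the fiber contraction, at a rate dominating the $(r-1)$-st power. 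The first line of $(\mathcal D_r)$ gives the uniform Hilbert-metric contraction of $\tilde\phi_f$ on cones, and hence the uniform fiber contraction of the inverse on the complementary cones $\chi^c$. So the crux is the normal-hyperbolicity bookkeeping: verifying that the fiber contraction rate beats the base expansion rate to the power $r-1$.

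First I would make the fiber-contraction rate explicit. On the projective fiber over $z$, the derivative of $\tilde\phi_f^{-1}(z)$ in the projective coordinate is controlled by the ratio $|\det \phi_f(z)|/\|\phi_f(z)|u\|^2$, since the projective action of a linear map contracts transverse directions at exactly the rate given by the ratio of the determinant to the square of the expanding norm. Iterating along a chain and invoking the cocycle estimate \eqref{Drpri} — which is precisely $|\det(\phi_f^k(z))|\cdot\|\phi_f^k(u(z))\|^{-2}\cdot\|D_zf^k\|^{i}\le C'^{-2}(C\Lambda^k)^{-1}$ — I read off that the fiber contraction of $\Phi^k$, multiplied by $\|D_zf^k\|^{i}$, decays geometrically for every $i$ up to $r-1$. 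That is exactly the statement that the base map expands by at most $\|D_zf^k\|$ while the fiber contracts fast enough that the product of the fiber rate with the $(r-1)$-st power of the base rate is still summable/geometrically small. This is the mechanism by which $(\mathcal D_r)$ with exponent $r-3$ buys $r-1$ derivatives: the extra $+2$ in the exponent of \eqref{Drpri} accounts for the two powers of the expanding norm in the projective-derivative denominator.

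With the $r-1$ spectral gaps in hand, I would apply the $C^{r-1}$-section theorem fiberwise. Concretely, the graph transform $\Phi$ is built from the $C^{r-1}$ fiber map displayed above (the map $(z,f,v)\mapsto\tilde\phi_f^{-1}(z)(v)$, already asserted to be $C^{r-1}$) precomposed with the smooth base shift $(z,f)\mapsto(f(z),f)$, which is as smooth as $f$ allows in $z$ and smooth in the parameter $f\in\M$. The invariant-section theorem then yields that the fixed section $(z,f)\mapsto e(f)(z)$ is of class $C^{r-1}$, and — because all constants in the gap estimates depend only on $C$, $\Lambda$ and $\|Df\|_{C^r}$ — its $C^{r-1}$-norm is bounded in terms of those same quantities. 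One must check the successive derivative estimates: for the $j$-th derivative with $j\le r-1$ one needs the fiber contraction to dominate the $j$-th power of the base expansion, which is supplied by \eqref{Drpri} with $i=j$. This completes the proof, and since $e(f)$ lies in the interior of $\chi^c$ and is tangent-invariant by construction, the two bullet points of Proposition \ref{foliation} follow at once.

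The main obstacle I anticipate is not the section theorem itself but the careful translation of the linear cocycle estimate \eqref{Drpri} into the precise fiber/base rate comparison demanded by the $C^{r-1}$ version of the theorem, keeping track of the fact that $f$ need not be invertible on the base $W$. The non-invertibility is exactly why one works with $\Phi$ via the inverse fiber maps $\tilde\phi_f^{-1}$ over the forward-shifted base point $f(z)$ rather than seeking a genuinely invariant line field; I would need to confirm that the pullback construction is well defined on the whole of $W\times\M$ (using that $\tilde\phi_f^{-1}(z)$ is defined on all of $\P\R^1\setminus\chi(f(z))$ and lands in $\chi^c(z)$), and that the bump-function modifications made in defining $\phi_f$ off $V_1$ do not spoil either the contraction on cones or the determinant bounds. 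These verifications are routine given the estimates already assembled, so the real content is organizing the rate inequalities so that each of the $r-1$ derivative levels is covered.
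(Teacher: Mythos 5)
Your proposal is correct, and its mechanism coincides with the paper's: the same pullback graph transform $\Phi(v)(z)=\tilde \phi_f^{-1}(z)\circ v(f(z))$, the same fixed point $e_f$, and the same spectral-gap estimate (\ref{Drpri}); you correctly identify the fiber contraction rate of the inverse projective action as (comparable to) $|det\, \phi_f|\cdot\|\phi_f(u)\|^{-2}$, and correctly explain how the exponent $r-3$ in $(\mathcal D_r)$, shifted by the two powers of $\|\phi_f(u)\|$ in that denominator, covers the derivative levels $i\le r-1$. The difference is one of execution rather than of substance. You invoke the Hirsch--Pugh--Shub $C^{r-1}$-section theorem as a black box, whereas the paper proves the needed statement by hand: Lemmas \ref{contractionpartielle} and \ref{Lemma 5.3} show that an iterate of $\Phi$ contracts an equivalent norm on the space $\Sigma_{r-1}$ of $C^{r-1}$ sections (and then on sections over $W\times \M$, treating $z$ and $f$ symmetrically), by induction on the order of derivatives via the Leibniz and Faa-di-Bruno formulas and metric-change arguments; the top-order term $\partial_v (\tilde \phi_f^{-1}(z))\circ (D_z^{i}v)(D_zf)^i$ is exactly where Lemma \ref{corocontract} --- your rate comparison --- enters. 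What the paper's self-contained route buys is precisely the two points you yourself flag as needing confirmation: the classical section theorem is stated for a fiber contraction covering an invertible base map, while here $f$ is non-invertible and the operator is a pullback transfer operator over the forward shift $(z,f)\mapsto (f(z),f)$, so a citation would have to be to a version adapted to that setting (or be rederived, which is what the paper does); and the quantitative clause of the Claim --- the $C^{r-1}$-norm bounded only in terms of $C$, $\Lambda$ and $\|f\|_{C^r}$ --- falls out of the explicit contraction constants rather than having to be extracted from a general theorem. Modulo supplying such a reference or its proof, your argument is complete and is the same argument.
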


Claim \ref{regularitedee} is a consequence of the following contraction Lemma on the space \[\Sigma_{r'}:=\{v\in C^{r'}(W,\R\P^1):\; v(z)\notin \chi(z)\; \forall z\in W\},\quad r'<r.\]

\begin{lemm}\label{contractionpartielle}
For all $f\in  \mathbb M$, the following operator is of class $C^\infty$:
\[\Phi_f\colon \Sigma_{r-1} \ni v \mapsto [z\mapsto \tilde \phi_f^{-1}(z)\circ v(f(z))]\in \Sigma_{r-1}.\]
Moreover  it has an iterate which is contracting on the intersection of the $C^0$-neighborhood of $e_f$ with $\Sigma_{r-1}$.
\end{lemm}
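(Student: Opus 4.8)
The plan is to treat $\Phi_f$ as a graded (fiber) contraction sitting over the base $C^0$-contraction already obtained, recovering regularity one derivative at a time via the estimate \eqref{Drpri}.

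\emph{Well-definedness and smoothness.} First I would check $\Phi_f(\Sigma_{r-1})\subset \Sigma_{r-1}$. For the fiber condition, $v(f(z))\in\chi^c(f(z))$ and $\tilde\phi_f^{-1}(z)$ sends $\chi^c(f(z))$ into $\chi^c(z)$, so $\Phi_f v(z)\notin\chi(z)$; for the regularity, $z\mapsto f(z)$ is $C^r$, $v$ is $C^{r-1}$ and $(z,w)\mapsto\tilde\phi_f^{-1}(z)(w)$ is $C^{r-1}$, so the composite is $C^{r-1}$. Writing $g(z,w):=\tilde\phi_f^{-1}(z)(w)$ and $C_f(v):=v\circ f$, we have $\Phi_f=N_g\circ C_f$, where $N_g(V)(z):=g(z,V(z))$. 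The operator $C_f$ is bounded linear on $C^{r-1}$ (as $f\in C^r$), hence $C^\infty$. The decisive point for $N_g$ is that, in an affine chart, $g(z,\cdot)$ is a fractional linear (projective) map, so each fiber derivative $\partial_w^n g(z,\cdot)$ is again a rational map whose coefficients are products and quotients (with non-vanishing denominator on $\chi^c$) of the $C^{r-1}$ entries of $\phi_f(z)$; thus $\partial_w^n g$ retains the full $C^{r-1}$ base regularity for every $n$. Consequently $N_g$ is $C^\infty$ as a map $\Sigma_{r-1}\to\Sigma_{r-1}$ — this is better than the generic $\Omega$-lemma precisely because the fiber map is projective — with $D\Phi_f(v)\delta(z)=\partial_w g(z,(v\circ f)(z))\cdot\delta(f(z))$.

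\emph{Contraction.} On $C^0$ the map is already a contraction: the excerpt's observation that $\tilde\phi_f|\chi$ is a Hilbert-metric contraction (coming from the first line of $(\mathcal D_r)$) gives that $\tilde\phi_f^{-1}(z)$ contracts the Hilbert metric of $\chi^c$ by a uniform factor $\kappa<1$, whence $|\partial_w g|\le\kappa$ on $\chi^c$ and $e_f$ is its fixed section. To upgrade this to a $C^{r-1}$-contraction I would estimate the top-order part of $\partial^{r-1}(D\Phi_f(v)\delta)$: by Faà di Bruno it equals $\partial_w g\cdot D^{r-1}\delta(f(z))\cdot(D_zf)^{\otimes(r-1)}$ plus terms involving only derivatives of $\delta$ of order $<r-1$, with bounded coefficients built from $f$ and $g$ on the $C^0$-ball about $e_f$. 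Iterating $k$ times and using the cocycle structure of the composed projective maps and of $(Df^k)^{\otimes(r-1)}$, the leading coefficient is comparable to $|\det\phi_f^k|\cdot\|\phi_f^k(u)\|^{-2}\cdot\|Df^k\|^{r-1}$, which by \eqref{Drpri} with $i=r-1$ is $\le C'^{-2}(C\Lambda^k)^{-1}$. The estimates \eqref{Drpri} with $i<r-1$ bound the lower-order diagonal blocks even more strongly. Working in an adapted (weighted) $C^{r-1}$-norm that damps the lower derivatives, the lower-order terms become subordinate, so $\Phi_f^k$ is a genuine contraction on the intersection of a $C^0$-neighborhood of $e_f$ with $\Sigma_{r-1}$ as soon as $k$ is large enough that $C'^{-2}(C\Lambda^k)^{-1}<1$.

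The main obstacle is this last step: converting the contraction of the top ($(r-1)$-st) derivative into a contraction of the whole $C^{r-1}$-norm. This requires organizing the Faà di Bruno tail into a lower-triangular scheme with contracting diagonal and tuning the weights of the adapted norm so that the off-diagonal (lower-order) terms, whose coefficients need not contract per step, are absorbed — the standard $C^r$-section / fiber-contraction mechanism. Here it is exactly \eqref{Drpri} (arising from the exponent $r-3$ in $(\mathcal D_r)$, cf. Remark \ref{rema2.9}) that supplies the diagonal rates for all orders up to $r-1$, and the ceiling $r-1$ is forced because $\phi_f$, being built from $Df$, is itself only $C^{r-1}$.
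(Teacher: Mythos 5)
Your proposal is correct and follows essentially the same route as the paper: a Leibniz/Fa\`a di Bruno decomposition in which the top-order coefficient is controlled by (\ref{Drpri}) --- the paper packages this same projective-derivative computation as Lemma \ref{corocontract} and condition $(\mathcal D_r')$ --- while the lower-order terms are absorbed by an adapted weighted norm of the form $\max_{k\le i} c_k\|\partial_z^k v\|_{C^0}$, yielding contraction of an iterate on a $C^0$-neighborhood of $e_f$. The only material addition is your explicit $C^\infty$-smoothness argument for $\Phi_f$ via the decomposition into $v\mapsto v\circ f$ followed by the fiberwise Moebius composition operator, a claim that the paper states in the lemma but does not detail in its proof.
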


This lemma is shown below and will be generalized to the following contracting lemma on the space
\[\Sigma:= \{v\in C^{r-1}(W\times \mathbb M,\mathbb R\mathbb P^1):\; v(z,f)\notin \chi(z)\forall z\in W\times \mathbb M\}\]
\begin{lemm}\label{Lemma 5.3}
The following operator has a contracting iterate on the intersection of the $C^0$-neighborhood of $(z,f)\mapsto e_f(z)$ :
\[\Psi\colon \Sigma\ni v\mapsto [(z,f)\mapsto \tilde \phi^{-1}_f (z)\circ v(f(z))]\in\Sigma\]
\end{lemm}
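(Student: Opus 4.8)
The plan is to deduce Lemma~\ref{Lemma 5.3} from Lemma~\ref{contractionpartielle} by enlarging the base, the point being that all the estimates used for a fixed $f$ are already uniform in $f\in\M$ and that the parameter direction is dynamically \emph{neutral}, so no new bunching is needed. First I would set up the augmented skew-product: let $\hat f\colon W\times\M\to W\times\M$, $(z,f)\mapsto(f(z),f)$, which is $C^r$ since the family is $C^r$ and the $\M$-coordinate is fixed, and over each point $(z,f)$ let the fibre act by $\tilde\phi_f(z)$ on $\P\R^1$, with cone field $\hat\chi(z,f):=\chi(z)$. Pseudo-invariance $cl(D_zf(\chi(z)))\subset\chi(f(z))$ then reads $D_z\hat f(\hat\chi(z,f))\subset\hat\chi(\hat f(z,f))$, and with these identifications $\Psi$ is exactly the graph transform $\Phi_{\hat f}$ of Lemma~\ref{contractionpartielle} for this augmented system: it evaluates a section at $\hat f(z,f)=(f(z),f)$ and pulls it back by $\tilde\phi_f^{-1}(z)$. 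The operator $\Psi$ preserves $\Sigma$ because $(z,f,v)\mapsto\tilde\phi_f^{-1}(z)(v)$ is $C^{r-1}$ (as established earlier in the proof) and $\hat f$ is $C^r$, and it preserves the constraint $v(z,f)\notin\chi(z)$ because $\tilde\phi_f^{-1}(z)$ sends $\P\R^1\setminus\chi(f(z))$ into $\chi^c(z)$.

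The $C^0$-contraction is then verbatim that of Lemma~\ref{contractionpartielle}. By the first line of $(\mathcal D_r)$ the map $\tilde\phi_f^{-1}(z)$ carries $\P\R^1\setminus\chi(f(z))$ strictly into the interior of $\chi^c(z)$ with a uniform gap, hence is a uniform contraction for the Hilbert metric of $\chi$; composing along an admissible length-$k$ orbit and using $(\mathcal M)$ shows that some iterate $\Psi^k$ contracts the $C^0$-distance on the $C^0$-neighborhood of $(z,f)\mapsto e_f(z)$. Since $\hat f$ fixes the $\M$-coordinate, adjoining the parameter to the base changes nothing here: fibrewise distances in $\P\R^1$ over each $(z,f)$ are measured exactly as before, uniformly in $f$.

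The only place where the generalization must genuinely be checked, rather than merely quoted, is the $C^{r-1}$-contraction, and here I would run the fiber-contraction (Hirsch--Pugh--Shub $C^r$-section) scheme: one differentiates $\Psi$ and observes that the induced operator on the $i$-jet of sections is a fibre contraction over the $C^0$-contraction, with rate governed by $|\det(\phi_f^k(z))|\cdot\|\phi_f^k(u(z))\|^{-2}\cdot\|D_zf^k\|^{i}$, which by \eqref{Drpri} is bounded by $C'^{-2}(C\Lambda^k)^{-1}$ for every $1\le i\le r-1$. The main obstacle is to verify that the mixed derivatives $\partial_z^{j}\partial_f^{k}$ in the enlarged base produce no worse expansion than the purely $z$-derivatives handled fibrewise. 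This is exactly where neutrality of the parameter is used: because $\hat f$ fixes the $\M$-factor, its base expansion in the parameter direction is $1$, so a derivative of total order $i=j+k$ is dominated by $\|D_zf^k\|^{j}\le\|D_zf^k\|^{i}$ with $i\le r-1$, i.e.\ precisely the range covered by \eqref{Drpri}. Thus the bunching already established fibrewise suffices, the jet operators contract up to order $r-1$, and the fiber contraction theorem yields a $C^{r-1}$ fixed point $(z,f)\mapsto e_f(z)$ with norm bounded in terms of $C,\Lambda$ and the $C^r$-norms of $f\in\M$, which is exactly Claim~\ref{regularitedee}.
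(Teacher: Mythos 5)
Your proposal is correct and takes essentially the same route as the paper: both treat the parameter as a neutral extra base coordinate of the skew product $(z,f)\mapsto (f(z),f)$, observe that a mixed derivative $\partial_z^i\partial_f^j$ therefore picks up at worst a factor $(D_zf)^i$ with $i\le r-1$ on its top-order term, and conclude with the same bunching estimate (\ref{Drpri})/Lemma \ref{corocontract} after passing to an equivalent norm (the paper phrases this as changing the metric on $\M$, reducing to the contraction of $v\mapsto \partial_v(\tilde \phi_f^{-1}(z))\circ (\partial^i_z \partial ^j_f v)(D_z f)^i$). The only difference is packaging: you invoke the Hirsch--Pugh--Shub fiber-contraction theorem on jets, whereas the paper reruns the adapted-norm contraction argument of Lemma \ref{contractionpartielle} by hand, which is the same mechanism.
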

%
%
%
In order to prove Lemma \ref{contractionpartielle}, we shall simplify the implements. First let us notice that $e_f$ is invariant by $\phi_f$ and remains away of $cl(\chi)$, which is sent into itself  by $\phi_f$. Let $\tilde e_f$ be a unit vector field in $e_f$. Hence, there exists $C''>0$ such that for every $k\ge 0$:
\[ \|\phi_f^k(z)(e_f(z))\|\cdot \|\phi_f^k(z)(u(z))\|\le C'' |det(\phi_f^k(z))|\]
By (\ref{Drpri}), it holds for every $k\ge 0$, for every $i\le r-1$, for every $z\in W$: 
 \begin{equation}\label{Drpri2}  \|\phi_f^k(z)(\tilde e_f(z))\|\cdot \|\phi_f^k (u(z))\|^{-1}\cdot \|D_z f^k\|^{i}\le C''(C')^{-2}(C\Lambda^k)^{-1}\end{equation}
For an equivalent norm $\|\cdot \|_z$ of $\mathbb P\R^1$, we can assume that $u(z)$ and $\tilde e_f(z)$ are orthogonal and unitary. This induces a Riemannian metric on $\mathbb P\R^1$, and so an equivalent metric on $\mathbb P\R^1\times W$. 

To make the notations less cumber-storming, we shall work with an iterate of $\phi_f$ and $f$, such that we can assume $C=C'=C''=1$, such that $\Lambda' \in (1,\Lambda)$:
\begin{equation}\tag{$\mathcal D_r'$}
\frac{\|\phi_f(\tilde e_f(z))\|}{\|\phi_f (u(z))\|}\cdot \|D_z f\|^{i}\le \Lambda'^{-1},\quad \forall 1\le i\le r-1,\quad \forall z\in W
\end{equation}

A consequence of ($\mathcal D_r'$) is the following:
\begin{lemm}\label{corocontract}
\[\|\partial_v (\tilde \phi_f^{-1}(z))( e_f)\| \cdot \|D_z f\|^i\le \Lambda'^{-1},\quad \forall 1\le i\le r-1\]
\end{lemm}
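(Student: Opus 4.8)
The statement to prove is
\[\|\partial_v (\tilde \phi_f^{-1}(z))( e_f)\| \cdot \|D_z f\|^i\le \Lambda'^{-1},\quad \forall 1\le i\le r-1,\]
so the plan is to relate the derivative $\partial_v(\tilde\phi_f^{-1}(z))$ of the projective action at the point $e_f$ to the ratio of expansion rates appearing in $(\mathcal D_r')$, and then simply read off the bound. Let me set up notation. We are looking at the action $\tilde\phi_f(z)$ on $\P\R^1$ induced by the linear map $\phi_f(z)$, and its inverse action $\tilde\phi_f^{-1}(z)$ which is well defined on $\P\R^1\setminus\chi(f(z))$. The basic fact I would invoke is the standard formula for the derivative of a projective action: if $A\in GL_2(\R)$ acts on lines, and we measure infinitesimal angles with respect to a frame adapted so that the relevant directions are orthonormal, then the derivative of the projective map at a line $\ell$ is multiplication by the ratio of the expansion of $A$ on the orthogonal complement of $\ell$ over the expansion of $A$ on $\ell$ itself (this is exactly how a linear map contracts or expands the projective metric in the direction transverse to its most/least expanded eigendirections).

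Here is the key step in detail. By the choice made just before the lemma, for each $z$ we use the equivalent norm $\|\cdot\|_z$ on $\P\R^1$ in which $u(z)$ and $\tilde e_f(z)$ are orthonormal; this is legitimate since all such norms are uniformly equivalent and the estimates only change the constants, which have already been normalized to $1$. Under this adapted frame, the projective derivative of $\tilde\phi_f(z)$ at the line $e_f(z)$ is, up to the uniform constants absorbed into the normalization, the ratio
\[\frac{\|\phi_f(z)(u(z))\|}{\|\phi_f(z)(\tilde e_f(z))\|},\]
because $e_f(z)$ is carried to $e_f(f(z))$ by invariance, $u(z)$ is the transverse (most expanded) direction, and the projective action dilates transverse directions by precisely the quotient of the two one-dimensional expansion factors. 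Taking the inverse action $\tilde\phi_f^{-1}(z)$ reciprocates this ratio, so $\partial_v(\tilde\phi_f^{-1}(z))(e_f)$ has norm comparable to
\[\frac{\|\phi_f(z)(\tilde e_f(z))\|}{\|\phi_f(z)(u(z))\|}.\]
Multiplying by $\|D_zf\|^i$ and invoking $(\mathcal D_r')$ with exponent $i$ then gives exactly $\Lambda'^{-1}$, which is the desired bound.

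The main obstacle is making the projective-derivative computation clean and honest, i.e.\ verifying that the derivative of the projective action at the invariant line really is controlled by the ratio of the two directional expansion rates, and that the passage from the linear data to the $\P\R^1$ metric does not introduce factors that spoil the inequality. This is where the careful choice of the adapted orthonormal frame $(u(z),\tilde e_f(z))$ pays off: in such a frame the $2\times 2$ matrix of $\phi_f(z)$ is (after the reduction to $C=C'=C''=1$) in a convenient form, and the standard formula for the differential of the action of $GL_2(\R)$ on $\P\R^1$ reduces to the quotient above. I would therefore first record that formula as an elementary linear-algebra computation (the derivative of $[\,(a,b)\mapsto \text{slope}\,]$ composed with a linear map), then substitute the invariance $\phi_f(z)(e_f(z))\parallel e_f(f(z))$ to identify numerator and denominator with the quantities in $(\mathcal D_r')$, and finally quote $(\mathcal D_r')$ to conclude. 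The remaining content is purely the bookkeeping of constants, all of which have been normalized to $1$ in the reduction preceding the lemma.
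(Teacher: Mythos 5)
Your argument is essentially the paper's: in the adapted metric where $u(z)$ and $\tilde e_f(z)$ are orthonormal, the paper also identifies $\|\partial_v(\tilde\phi_f^{-1}(z))(e_f)\|$ with the ratio $\|\phi_f(\tilde e_f(z))\|\,/\,\|\phi_f(u(z))\|$ --- obtained there by an explicit limit computation rather than by quoting the general formula for the derivative of a projective action --- and then concludes by $(\mathcal D_r')$, exactly as you do; the bounded angle and metric factors are absorbed into the normalized constants in both treatments.

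The one point you omit, and which the paper treats as a separate first case, is the degenerate situation $Df|e_f(z)=0$. Your derivation produces the ratio by ``reciprocating'' the derivative of the forward action $\tilde\phi_f(z)$ at $e_f(z)$; but when $\phi_f(z)(\tilde e_f(z))=0$ the forward action is not defined at $e_f(z)$ (and is locally constant near it), so there is nothing to invert, and one must instead argue directly --- e.g.\ via the adjugate of $\phi_f(z)$ --- that $\tilde\phi_f^{-1}(z)$ is then the constant map onto the kernel line, whence $\partial_v(\tilde\phi_f^{-1}(z))(e_f)=0$ and the bound holds trivially. This case is not a pathology here: $f$ is an endomorphism whose differential may be singular (for H\'enon-like maps with $b=0$ the line field $e_f$ is exactly the kernel of $Df$), and $\phi_f$ is even constructed to have zero determinant outside $V_1$; so this one extra line, which is the only thing the paper's proof contains beyond your computation, does need to be added.
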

\begin{proof} If $Df|e_f(z)= 0$, then the derivative  $\partial_v (\tilde \phi_f^{-1}(z))(e_f)$ equals zero. Otherwise, since $e_f(z)$ and $u(z)$ are orthogonal, $\|\partial_v (\tilde \phi^{-1}_f(z))(e_f)\|$ is equal to 
\[\lim_{\theta\to 0} \left\|\frac{\phi_f (\tilde e_f+\theta u)}{\theta\|\phi_f (\tilde e_f+\theta u)\|}-\frac{\tilde e_f}\theta\right\|^{-1}= \frac{\|\phi_f (\tilde e_f)\|}{\|\phi_f (u)\|}.\]
We conclude by using $(\mathcal D_r')$.
\end{proof}

\begin{proof}[Proof of Lemma \ref{contractionpartielle}]
By induction on $i\le r'$, we are going to show the contraction of $\Phi_f$ for an equivalent norm to $\max_{k\le i}  \|\partial_z^k v\|$ for $i\le r$, on the intersection of $\Sigma_{r'}$ with a $C^0$-neighborhood of $e_f$. Let $v\in \Sigma_{r'}$ be $C^0$-close to $e_f$.

By the  Leibniz formula, we have $\partial_z^i(\tilde \phi_f^{-1}(z)\circ v\circ f(z))$ equals to
\[(\partial_z^{i}(\tilde \phi_f^{-1}(z)))\circ v\circ f(z)
+ \sum_{0\le k<i} \binom ik \partial_z^{k}\partial_v (\tilde \phi_f^{-1}(z))\circ \partial_z^{i-k} (v\circ f(z)).\]

By changing the metric of  $W$, we can suppose that $\partial_z^{i}(\tilde \phi_f^{-1}(z))$ is $C^0$-small for all $z,f$. As the map $\tilde \phi_f^{-1}(z)$ is a Moebius transform of $\P\R^1$   for every $f$ and  $z$, it comes that $\partial_z^{i}(\tilde \phi_f^{-1}(z))$ is $C^1$-small and so $v\mapsto \partial_z^{i}(\tilde \phi_f^{-1}(z))\circ v\circ f$ contracts the $C^{0}$-norm.

By the  Faa-di-Bruno formula, we have:
\[\partial_z^{i} (v\circ f(z)) = \sum_{l=1}^i 
(D_z^l v)\circ f(z) \circ 
Q_{li} ( D_z f(z), \dots ,D_z^{i-l+1} f(z)),\]
with $Q_{li} \in \Z[X_1,\dots ,X_{i-l+1}]$, $Q_{ii}(X_1)=X_1^k$.  

The same ``metric-change'' argument as for the $C^0$-contraction shows the contraction of an iterate of:
\[D^l_z v\mapsto \binom ik \partial_z^k \partial_v (\tilde \phi^{-1}_f(z))D_z^lv\circ f\circ Q_{l(i-k)}( D_z f(z), \dots ,D_z^{i-k-l+1} f(z)),\quad \forall i-k> l\ge 1\]

Hence, by induction and  by changing the norm $N_{i}$ to an equivalent norm of the form $\max_{k\le i}  c_{k}\|\partial_z^k v\|_{C^0}$, with adequate coefficients $c_{k}>0$, it remains only to prove the contraction of 
\[\partial_v (\tilde \phi_f^{-1}(z))\circ (D_z^{i}v)(D_zf)^i,\]
which is indeed $\Lambda'^{-1}$-contracting by Lemma \ref{corocontract}.
\end{proof}
\begin{proof}[Proof of Lemma \ref{Lemma 5.3}]
The proof is done in the same way as for Lemma \ref{contractionpartielle}. We look at the derivative of $\partial^i_z\partial_f^j(\tilde \phi_f^{-1}(z)\circ v\circ f(z))$, for which the formula is similar to $\partial_z^{i+j}(\tilde \phi_f^{-1}(z)\circ v\circ f(z))$. Indeed, $z$ and $f$ play a similar role, in particular, we can change the metric on $\M$ in order to prove that the contraction of the above derivatives is equivalent to the contraction of 
\[v\mapsto (\partial_v (\tilde \phi_f^{-1}(z))\circ (\partial^i_z \partial ^j_f v)(D_z f)^i)\]
for an equivalent norm of the form $\max_{i'+j'\le i+j} c_{i'j'} \|\partial_z^{i'} \partial_ f^{j'} v\|_{C^0}$. 
\end{proof}

\subsection{Linearization of unstable directions (Proof of Proposition \ref{semilinearization})}\label{a2B}

The proof is rather classical (see \cite{Guguyoyo10} Appendix A, or \cite{Su87}), if no parameter dependence. Hence we redo the proof here.  

For $\st = (\sa,\sb)\in \sT$, put $f_{\st\mu}:= F_{\st\mu}|I^u_{a\mu }\times \{0\}$. 

First we fix $\mu$ and we want to linearize the functions $(f_{\st\mu})_\st$ which are of class $C^{r}$.
 
Let $\Gamma_\mu$ be the space $\{(\phi_{\sa})_{\sa\in \sY} \in \prod_{\sa\in \sY} C^{r}(I^u_{\sa\mu}, \R):\; \phi_{\sa}(0)=0\; \text{and}\; D\phi_{\sa}(0)=1\}$. 
It is a convex subset of the Banach vector space $\prod_{\sa\in \sY} C^{r}(I^u_{\sa\mu}, \R)$ equipped with the norm:
\[\|(\phi_{\sa})_{\sa}\|_{C^{r}}= \max_{0\le s\le r,\sa\in \sY} \|D^s \phi_{\sa}\|.\]  

We want show that the following map has a fixed point  in $\Gamma$:
\[\Phi_\mu\colon \Gamma_\mu\ni 
\phi=(\phi_{\sa})_{\sa}
\to (\Phi_{\mu}(\phi)_\sb)_{\sb}\in\Gamma_\mu,\quad \text{with  } \left\{\begin{array}{cl}
\Phi_{\mu}(\phi)_\sb = D_0f_{\st \mu}\cdot  \phi_{\sa}\circ f_{\st \mu}^{-1}& \text{if }\exists \st= (\sa,\sb)\in \sT ,\\
\Phi_{\mu}(\phi)_\sb=id &\text{otherwise.}\end{array}\right.\]
A  fixed point $(\phi_{\sa\mu})_\sa$ of $\Phi_\mu$ will satisfy for every $\st=(\sa,\sb)\in \sT$ that 
\[D_0f_{\st \mu}\cdot  \phi_{\sa\mu}\circ f_{\st \mu}^{-1}=\phi_{\sb\mu}
\Leftrightarrow 
D_0f_{\st \mu}\cdot id=\phi_{\sb\mu} \circ f_{\st \mu}\circ \phi_{\sa\mu}^{-1}\]
And so the charts $y_{\sa\mu}':= (x,y)\mapsto y_{\sa\mu}(\phi_{\sa}^{-1}(x),y)$ are of type $A$ for $f_\mu$.

Hence to prove Proposition \ref{semilinearization}, it is sufficient to prove the existence of such a $C^{r}$-fixed point $\phi_{\mu}=(\phi_{\mu\sa})_\sa$ with bounded norm and to show that the parameter dependence is sufficiently smooth so that the following is bounded
\[\sup_{\sa} \left(\|(x,\mu)\mapsto \phi^{-1}_{\mu\sa}(x)\|_{C^{1}}, \max_{x,\mu, (i,0,k)\in \Delta^r_A}\|\partial_x\partial_\mu \phi_{\mu\sa}(x)\|\right)\]
  In order to do so, first we show that $\Phi_\mu$ has a contracting iterate.

\paragraph{Proof that $\Phi_\mu$ has a contracting iterate for $r=2$.}
For any $\phi$ and $\psi$ in $\Gamma_\mu$, the functions $\Phi_{\mu}(\phi)$ and $\Phi_{\mu}(\psi)$ have coordinatewise the same value and derivative at $0$. Thus the $C^2$-contraction of an iterate of $\Phi_{\mu}$ is equivalent to the uniform contraction of the second derivative by an iterate of $\Phi_{\mu}$.
Moreover, since $(f_{\sa\mu})_{\sa}$ is uniformly expanding and since the interval $(I^u_{\sa\mu})_\sa$ are uniformly bounded, 
to get the contraction of an iterate of $\Phi_\mu$ it is sufficient to prove the existence of $\delta>0$ such that $\Phi_{\mu}$ contracts the following semi-norm on $\Gamma_\mu$:
\[N((\phi_\sa)_\sa)=\sum_{\sa\in \sY,\; x\in I_{\sa\mu}^u\cap [-\delta, \delta] } \|D^2_x \phi_\sa\|.\]
Indeed, the sets $I_{\sa\mu}^u\setminus [-\delta, \delta]$ escape in finitely many iterations under the action of $(f_{\st\mu})_\mu$, by condition $(\mathcal M)$ and the boundedness of $(|I^u_{\sa\mu}|)_{\sa\in \sY}$.

  We compute: 
\[D^2 (\Phi_{\mu}(\phi)_\sb)(x)=  
D_0f_{\st \mu}\cdot D^2\phi_{\sa}\circ f_{\st \mu}^{-1}(x)\cdot  (Df_{\st\mu}^{-1}(x))^2
+D_0f_{\st \mu}\cdot D\phi_{\sa}\circ f_{\st \mu}^{-1}(x)\cdot D^2f_{\st\mu}^{-1}(x).\]

For $\delta$ small, for $x\in f_{\st\mu}^{-}(I_{\sb\mu}^u\cap [-\delta, \delta])$ the derivative $Df_{\st\mu}(x)$ is close to $Df_{\st \mu}(0)$, and this uniformly in $\sa\in \sY$ since $\mathcal Y$ is of bounded norm. Also, by rescaling the interval $(I^u_{\sa\mu})_{\sa\in \sY}$ by  factors (as we do to construct adapted metric), we can assume that for every $\st=(\sa,\sb)\in \sT$, for every $x\in I_{\sb\mu}^u\cap [-\delta, \delta]$,  we have $|(Df_{\st\mu}^{-1}(x))^2\cdot Df_{\st \mu}(0)|<\Lambda^{-1/2}$. Such factors are bounded in function of $C$, $\Lambda$ and the $C^2$-norms of $f_\mu$ and $\mathcal Y$. 

 Hence given $\phi,\psi\in \Gamma_\mu$, we have for all $\sa\in \sY$, $x\in I_{\sa\mu}^u\cap [-\delta, \delta]$:
\[|D^2(\Phi_{\mu}(\phi)_\sb-\Phi_{\mu}(\psi)_\sb)(x)|\le \Lambda^{-1/2}\cdot   
N((\phi_\sa)_\sa-(\psi_\sa)_\sa)
+|D_0f_{\st \mu}| \cdot |D^2f_{\st\mu}^{-1}(x)|\cdot 
|(D\phi_{\sa}-D\psi_{\sa})\circ f_{\st \mu}^{-1}(x)|.\]
On the other hand, for $x\in I_{\sa\mu}^u\cap [-\delta, \delta]$:
\[\|(D\phi_{\sa}-D\psi_{\sa})(x)\|= \left|\int_{0}^x 
(D^2\phi_{\sa}-D^2\psi_{\sa})(t)dt\right|\le \delta N((\phi_\sa)_\sa-(\psi_\sa)_\sa)\]
Consequently:
\[N(\Phi_{\mu}(\phi)_\sb-\Phi_{\mu}(\psi)_\sb)\le (\Lambda^{-1/2}+\delta\sup_{x,\st\in \sT} |D_0f_{\st \mu}| \cdot |D^2f_{\st\mu}^{-1}(x)|)\cdot   
N((\phi_\sa)_\sa-(\psi_\sa)_\sa).\]
Therefore, for $\delta$ small enough, $\Phi_{\mu}$ is contracting for $N$. Hence it has a fixed point whose norm is bounded depending on the $C^{2}$-norm of $f_\mu$, $\mathcal Y$ and $C,\Lambda$.

\paragraph{Proof that $\Phi_\mu$ has a  contracting iterate for $r>2$.}
By induction, an iterate of $\Phi_\mu$ contracts the $r-1$-first derivatives.
 Let us show that it contracts also the $r^{th}$-derivative.
 The Faa-di-Bruno formula is:
\[D^r (\phi_{\sa}\circ f_{\st \mu}^{-1})= \sum_{i=1}^r D^i \phi_{\sa} \circ f_{\st \mu}^{-1} Q_i (Df_{\st\mu}^{-1},\dots ,D^{r-i}f_{\st\mu}^{-1}),\]
with $Q_i \in \Z[u_1,\dots ,u_{r-i}]$, $Q_r(u_1)=u_1^r$. 

By changing the norm $\|\phi_{\sa}\|_{C^r}$ to an equivalent norm of the form 
\[\max_{i\le r} c_i \|D^i \phi_{\sa}(x)\|\]
 (with suitable $c_i>0$), to show that $\Phi_\mu$ has a contracting iterate, 
it is sufficient to remark as above that:  
\[ \sup_{I^u_{\sa\mu}\cap [-\delta,\delta]} |D f_{\st\mu}(0) \circ D^r \phi_{\sa} \circ f_{\st\mu}^{-1} \cdot Q_r (D^lf_{\st\mu}^{-1})_l(x)|
=\sup_{I^u_{\sa\mu}\cap [-\delta,\delta]} |D f_{\st\mu}(0) \circ D^r \phi_{\sa} \circ f_{\st\mu}^{-1} \cdot (Df_{\st\mu}^{-1})^r (x)|\]\[
\le \sup_{I^u_{\sa\mu}\cap [-\delta,\delta]} \Lambda^{-r+1} |D^r \phi_{\sa}(x)|\]

\paragraph{Parameter dependence of $(\rho_{i\mu})_\mu$}

For every $\sa\in \sY$, let $I_\sa:= \cup_{\mu\in \M} I_{\sa\mu}\times  \{\mu\}$ and let $B_r$ be the space of family of maps $(\phi_\sa)_{\sa\in \sY}$ with continuous derivative following $\partial^p_\mu \partial^q_x$ among  $p+q\le r$ with $p\notin \{r-1,r\}$. This is a Banach space endowed with the norm:
\[\|(\phi_\sa)_{\sa\in \sY} \|= \max_{p+q\le r, \, p\le r-2, \sa\in \sY} \|\partial^p_\mu \partial^q_x \phi_i \|.\]  

Let $\Gamma$ be the convex subset of $B_r$ formed by the maps $(\phi_i)_i\in B_r$ such that $(\phi_i(\cdot, \mu))_i$ is in $\Gamma_\mu$ for every $\mu\in \M$. 
 
We notice that the following map is well defined:
\[\Phi\colon \Gamma\ni (\phi_{i})_{i=1}^N\longmapsto \big[(x,\mu)\mapsto (\Phi_{\mu}(\phi(\cdot,\mu )_{i})_{i=1}^N\big]\in\Gamma\]
We want to prove the existence of a fixed point of $\Phi$ with a bounded norm. This implies that $(\mathcal Y'(\mu))_\mu$ is a $C^{r-1}$-parameter dependent systems.

For this end, we follow the same proof as above, by showing that it has a contracting iterate. 
To prove the contraction of the derivatives of the form $\partial^p_\mu \partial^q_x \phi_i $, with  $p+q\le r$ and $p\notin \{r-1,r\}$, we follow the same scheme as above. We already did the case $p=0$.

For every $\mu\in \M$ and $\st=(\sa,\sb)\in \sT$, we have
$\partial_\mu\Phi(\phi)_{\sa\mu }(0)=0$ and $\partial_\mu \partial_x\Phi(\phi)_{\sa\mu }(0)=0$.  Hence it is sufficient to prove the eventual contraction on the uniform norm of the higher derivatives. We compute:
  \[\partial_\mu \Phi(\phi)_{\sb\mu}(x)=\partial_\mu D f_{\st\mu}(0)\cdot \phi_{\sa\mu }( f_{\st\mu}^{-1}(x))
+D f_{\st\mu}(0)\cdot (\partial_\mu \phi_{\sa \mu})(f_{\st\mu}^{-1}(x))
+D f_{\st\mu}(0)\cdot  (\partial_x \phi_{\sa \mu})\circ \partial_\mu f_{\st\mu}^{-1}(x)\]
For an equivalent norm on $Diff^r(\R^2)$, we can suppose that $\partial_\mu f_{\mu\sb}^{-1}(x)$ and $\partial_\mu D f_{\st\mu}(0)$ are small. 

Hence to prove that $\Phi$ contracts the derivative  $\partial_\mu \partial^q_x \phi_{\sa \mu} $ for $q\le r$, we just need to prove that the following map contracts the uniform norm of the $q^{th}$-derivative $\partial_x^q$, for $2\le q\le r$:
\[\partial_\mu \phi_{\sa\mu}\mapsto D f_{\st\mu}(0)\cdot (\partial_\mu \phi_{\sa\mu})(f_{\st\mu}^{-1}(x))\]
The proof of which is the same as above.

Similarly, for $p\le r-2$, to prove that $\Phi$ contracts the derivative  $\partial_\mu^p \partial^q_x \phi_{\sa \mu} $, with $p+q\le r$, we use the  Faa-di-Bruno formula to compute $\partial_\mu^p  \phi_{\sa \mu} $, and show that this is equivalent to prove that the following contracts the uniform norm of the $q^{th}$-derivative $\partial_x^q$, for $2\le q\le r-p$:
\[\partial^p_\mu \phi_{\sa\mu}\mapsto D f_{\st\mu}(0)\cdot (\partial^p_\mu \phi_{\sa\mu})(f_{\st\mu}^{-1}(x))\]
We remark that $[2, r-p]$ is empty for $p\in \{r-1,r\}$. This is why we lose the corresponding derivatives.
   
\subsection{Full linearization (Proof of Proposition \ref{linearization})}\label{proofof{estimaterenor}}
Let $(f_\mu)_\mu$ be a Hénon $C^{r}$-like family satisfying condition $(\mathcal D_r)$ and $(\mathcal M)$ with constants $(C,\Lambda)$ for a $C^r_B$-system of charts $(\sY,\sT,\mathcal Y(\mu))_\mu$ with bounded norm.

We recall that this implies for every $\st=(\sa,\sb)\in \sT$, that  we have the form 
\[F_{\st\mu}= y_{\sb\mu}^{-1}\circ f_{ \mu}\circ y_{\sa\mu}\colon (x,y) \mapsto (\sigma_{\st\mu}\cdot x, g_{\st\mu}(x,y)),\] 
with $(x,y,\mu)\mapsto g_{\st\mu}(x,y)$ of class $C^{r}_B$, that is the following derivatives exist continuously:
\[(x,y,z)\mapsto \partial_x^i\partial_y^j\partial_\mu^kg_{\st\mu}(x,y),\]
\[\forall (i,j,k)\in \Delta_B^r := \big\{(i,j,k);\; i+j+k\le r,\; i\le r-1,\; k\le r-2\big\}\]  

In order to linearize $g_{\st\mu}$ we show below the following.
\begin{lemm}\label{Lemmadederive}There exists an equivalent $C^r_B$-bounded system of charts $(\sY,\sT,\mathcal Y'(\mu))_\mu$ with the following form, 
for every $\st=(\sa,\sb)\in \sT$: 
\[F_{\st\mu}= y_{\sb\mu}^{-1}\circ f_{ \mu}\circ y_{\sa\mu}\colon (x,y) \mapsto (\sigma_{\st\mu}\cdot x, g_{\st\mu}(x,y)),\]
such that  there exists of $\lambda_{\st}\in \R$ satisfying $\partial_y g_{\st\mu}(x,0)=\lambda_\st$,  for every $x\in I^u_{\a\mu}$. Also
the new charts in $\mathcal Y'$ and their inverses have continuous derivatives $\partial^i_x\partial^j_y \partial_\mu^k$, for all 
$i+j+k\le r$, such that $i\le r-1$, $k\le r-2$.
\end{lemm}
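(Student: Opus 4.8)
The plan is to normalise the transverse multiplier along the central leaf by a fibrewise linear change of the $y$-coordinate, one per vertex. First I would use $(S_1)$, which says that the central leaf $\{y=0\}$ is invariant, so that $g_{\st\mu}(x,0)=0$ for every $x$ and hence
\[F_{\st\mu}(x,y)=(\sigma_{\st\mu}x,\ d_{\st\mu}(x)\,y+O(y^2)),\qquad d_{\st\mu}(x):=\partial_y g_{\st\mu}(x,0).\]
Because $f_\mu$ is H\'enon $C^1$-$\delta$-like with $\delta<1$, the fibre maps are local diffeomorphisms, so $d_{\st\mu}$ never vanishes; the aim is exactly to remove its $x$-dependence. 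I look for new charts $y'_{\sa\mu}:=y_{\sa\mu}\circ\Theta_{\sa\mu}^{-1}$ with $\Theta_{\sa\mu}(x,y)=(x,\theta_{\sa\mu}(x)y)$ and $\theta_{\sa\mu}>0$: such a change preserves the type $B$ form, fixes the first coordinate $x\mapsto\sigma_{\st\mu}x$, and leaves the images $Y_{\sa\mu}$ unchanged, so the two systems are equivalent.

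A direct computation gives $\partial_y\widetilde g_{\st\mu}(x,0)=\theta_{\sb\mu}(\sigma_{\st\mu}x)\,\theta_{\sa\mu}(x)^{-1}\,d_{\st\mu}(x)$, so putting $\lambda_\st:=d_{\st\mu}(0)$ reduces the statement to solving, for every arrow $\st=(\sa,\sb)$,
\[\frac{\theta_{\sb\mu}(\sigma_{\st\mu}x)}{\theta_{\sa\mu}(x)}=\frac{d_{\st\mu}(0)}{d_{\st\mu}(x)} .\]
Since $(\sY,\sT)$ is convergent and divergent it splits into disjoint chains and cycles, so it is enough to solve along one backward orbit. Writing $x_m:=\xi\,(\sigma_{\st_{n-1}\mu}\cdots\sigma_{\st_{n-m}\mu})^{-1}$ for the backward iterates of $\xi\in I^u_{\sa_n\mu}$, condition $(\mathcal M)$ gives $|x_m|\le |\xi|\,(C\Lambda^m)^{-1}\to0$, and I define
\[\theta_{\sa_n\mu}(\xi):=\prod_{m\ge1}\frac{d_{\st_{n-m}\mu}(0)}{d_{\st_{n-m}\mu}(x_m)} .\]
The $C^1$-dependence of $d_{\st\mu}$ on $x$ makes each factor $1+O(|\xi|\Lambda^{-m})$, so the product converges and stays bounded away from $0$; a one-step reindexing (telescoping) shows it satisfies the displayed equation at every arrow and is automatically consistent along the whole chain, while for a cycle the backward iteration of the return map still contracts $\xi$ to the fixed point $x=0$.

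The delicate part, and the main obstacle, is to propagate the regularity through the infinite product while staying inside the class $C^r_B$. Differentiating, each $\partial_x$ that falls on a factor $d_{\st_{n-m}\mu}(x_m)$ produces $\partial_x x_m\sim\Lambda^{-m}$, so the formally differentiated series keep converging geometrically and beat the expansion of $\sigma$; this gives $\partial_x^i\theta_{\sa\mu}$ for every $i\le r-1$, matching the $x$-regularity of $d_{\st\mu}$ and the ``$B$'' restriction $i\le r-1$. For the $\mu$-derivatives I would rerun the scheme from the proof of Proposition \ref{semilinearization}: a Fa\`a-di-Bruno expansion of $\partial_x^i\partial_\mu^k$ of the product, together with a metric change on $\M$ rendering the $\mu$-derivatives of $\sigma_{\st\mu}$ and of $d_{\st\mu}$ small, should show that the differentiated products converge for the admissible multi-indices.

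The accounting to be done most carefully is the mixed top-order case $\partial_x^i\partial_\mu^k$ with $i+k=r$: one must check that each term of the differentiated product calls only for derivatives $\partial_x^{i'}\partial_\mu^{k'}d_{\st\mu}$ that $d_{\st\mu}=\partial_y g_{\st\mu}(\cdot,0)$ actually possesses, so that the normalisation costs no derivative beyond the one in $x$ and the two in $\mu$ already absent from the data, and the output lands in $\Delta^r_B$ rather than dropping to $C^{r-1}_B$. This is the same source of derivative loss as in Proposition \ref{semilinearization}, and isolating it is where the argument must be run with care. Granting it, the charts $y'_{\sa\mu}$ and their inverses have continuous $\partial_x^i\partial_y^j\partial_\mu^k$ for all $(i,j,k)\in\Delta^r_B$, with norm bounded in terms of $C,\Lambda,\delta$, the $C^r_B$-norm of $\mathcal Y$ and the $C^r$-norm of $(f_\mu)_\mu$, which is the assertion of the Lemma.
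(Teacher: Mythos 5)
Your proposal is correct and is essentially the paper's own proof: your $\theta_{\sa\mu}$ is exactly the reciprocal of the paper's $\Delta_{\sa\mu}(x):=\prod_{i}\delta_{\st_i\mu}\bigl(x/(\sigma_{\st_i\mu}\cdots\sigma_{\st_0\mu})\bigr)$, built from the same normalized multipliers $\delta_{\st\mu}(x)=\partial_y g_{\st\mu}(x,0)/\lambda_{\st\mu}$ as an infinite product along the (unique, by convergence of the graph) backward chain, converging by $(\mathcal M)$, and fed into the same fiberwise rescaling $y'_\sa(x,y)=y_\sa(x,\Delta_{\sa\mu}(x)\,y)$ with the same telescoping verification of the functional equation and the same use of H\'enon-likeness to keep $\Delta_{\sa\mu}$ bounded away from zero. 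The regularity bookkeeping you flag as delicate is handled in the paper at the same level of detail (it is simply asserted from $(\mathcal M)$), so your proof matches it step for step.
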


We are now ready to prove Proposition \ref{linearization}. We are going to use the same linearization argument as for Proposition \ref{semilinearization} .  However we are going to linearize every $y$-coordinate along the line $x=constant$. Hence $x$ will play the same role as $\mu$.

For technical reasons, we shall chose $\eta>0$ small, so that for every $\st=(\sa,\sb)\in \sT$, with 
$\hat I ^u_{\sa \mu}$ the $\eta$-neighborhood of $I ^u_{\sa \mu}$, we can extend $F_{\st,\mu}$ to a function of the form
\[\hat F_{\st\mu}\colon \hat I ^u_{\sa \mu}\times I ^s_{\sa \mu}\in 
(x,y)\mapsto (\hat f_{\st\mu}(x), \hat g_{\st\mu}(x,y))\in \hat I ^u_{\sa \mu}\times I ^s_{\sa \mu},\]
such that
\begin{itemize}
\item  $\partial_y \hat F_{\st\mu}(x,0)=\lambda_\st$ and $\partial_y \hat F_{\st}(x,y)$  of the order of $\lambda_\st$, for all $(x,y)\in \hat I^u_{\sa \mu}\times I^s_{\sa \mu}$,
\item its derivatives $\partial^i_x\partial^j_y\partial^k_\mu$ for $i+j+k\le r$, so that $i\le r-1$ and $k\le r-2$, are continuous and bounded, this uniformly on $\sa$ and $(x,y,\mu)\in \hat I^u_{\sa \mu}\times I^s_{\sa \mu}\times \M$. 
\item $|D\hat f_{\st\mu}|\le |\sigma_{\st\mu}|$.
\end{itemize}

We notice that $\hat f_{\st\mu}|I^u_\mu= \sigma_{\st \mu} \cdot id$.

Let $B_\mu$ be the Banach space of families $(\phi_{\sa})_{\sa\in \sY}$ where $\phi_{\sa}$ is from $\hat I^u_{\sa \mu} \times \hat I^s_{\sa\mu}$ into $\R$, with continuous derivatives of the form $\partial^i_x \partial^j_y$ for every $i+j\le r$ such that $i\le r-2$, equipped with norm:
\[\|(\phi_{\sa})_{\sa}\|= \sup_{i+j\le r,\; i\le r-2,\;  \sa\in \sY} \|\partial^{i}_x \partial^j_y \phi_{\sa}\|.\]

Let $\Gamma_\mu $ be the convex subset of $B_\mu$ made by families $(\phi_{\sa})_{\sa}$ satisfying:
\[\phi_{\sa}(x, 0)=0\; \text{and}\; \partial_y \phi_{\sa}(x, 0)=1,\; \forall x\in  \hat I^u_{\sa \mu}, \;\forall \sa\in \sY.\]

The idea is to find a fixed point in $\Gamma_\mu$ of the following operator:
\[\Phi_{\mu }\colon \Gamma_\mu \ni 
\phi=(\phi_{\sa})_{\sa}\to (\Phi_{\mu x}(\phi)_\sa)_{\sa}\in\Gamma_\mu,\]
\[\text{with  }     \left\{\begin{array}{cl}
\Phi_{\mu}(\phi)_\sa = \lambda_{\st \mu}^{-1}\cdot  \phi_{\sb}(\hat f_{\st\mu}(x), \hat g_{\st\mu}(x,y))& \text{if }\exists \st= (\sa,\sb)\in \sT,
\\
\Phi_{\mu}(\phi)_\sb=id &\text{otherwise.}\end{array}\right.\]
which is on each curve $x=cst$ the same as in Proposition \ref{semilinearization} up to changing $x$ to $y$.

Also it contracts all the derivative of the form $\partial^j_y$ for $j\le r$, by the same proof as for Proposition \ref{semilinearization}. Indeed Condition $(\mathcal D_r)$ implies that $F_{\st \mu}$ contracts the $\partial_y$ direction. 

For all $\st=(\sa,\sb)\in \sT$ and $x\in I^u_{\sa\mu}$, we have
$\partial_x\Phi_\mu(\phi)_{\sa }(x,0)=0$ and $\partial_\mu \partial_y\Phi_\mu(\phi)_{\sa}(x,0)=0$.  Hence it is sufficient to prove the eventual contraction on the uniform norm of the higher derivatives. We compute:
  \[\partial_x \Phi(\phi)_{\sb\mu}(x,y)= 
  \lambda_{\st \mu}^{-1}\cdot  \partial_x \phi_{\sb}(\hat f_{\st \mu}(x), \hat g_{\st\mu}(x,y))Df_{\st \mu}(x)+  \lambda_{\st \mu}^{-1}\cdot  \partial_y \phi_{\sb}(f_{\st \mu}(x), g_{\st\mu}(x,y))\partial_x g_{\st\mu}(x,y)\]
Hence to prove that $\Phi$ contracts the derivatives  $\partial_x \partial^j_y \phi_{\sa \mu} $ for $j\le r-2$, we just need to prove that the following map contracts the uniform norm of the $j^{th}$-derivative $\partial_y^j$, for $2\le j\le r-1$:
\begin{equation}\label{map1}
\partial_x \phi_{\sb}\longmapsto[(x,y)\to\lambda_{\st \mu}^{-1}\cdot  \partial_x \phi_{\sb}(f_{\st \mu}(x), g_{\st\mu}(x,y)) Df_{\st \mu}(x)]\end{equation}
By the same proof as in Proposition \ref{semilinearization}, we can show that the map:
\begin{equation}
\partial_x  \phi_{\sb}\longmapsto[(x,y)\to\lambda_{\st \mu}^{-1}\cdot  \partial_x \phi_{\sb}(f_{\st \mu}(x), g_{\st\mu}(x,y))]\end{equation}
is $\lambda_{\st}$ contracting (for an equivalent norm). As $|Df_{\st \mu}|\le \sigma_{\st}$, by $(\mathcal D_r)$ the map  (\ref{map1}) contracts the uniform norm of the $j^{th}$-derivative $\partial_y^j$, for $2\le j\le r-1$.

Similarly, for $i\le r-2$, to prove that $\Phi$ contracts the derivative  $\partial_x^i \partial^j_y \phi_{\sa \mu} $, with $i+j\le r$, we use the  Faa-di-Bruno formula to compute $\partial_x^i  \phi_{\sa \mu} $, and show that this is equivalent to prove that the following contracts the uniform norm of the $j^{th}$-derivative $\partial_y^j$, for $2\le i\le r-j$:
\[\partial_x^i \phi_\sb \mapsto   \lambda_{\st \mu}^{-1}\cdot  \partial^i_x \phi_{\sb}(\rho_{\st \mu}(x), g_{\st\mu}(x,y))
(\rho_{\st \mu}'(x))^i\]
For the same reason this operator is $\lambda_{\st} \sigma_{\st}^i$-contracting (for an equivalent norm), since by $(\mathcal D_r)$ of remark \ref{rema2.9}, we can assume that for equivalent Riemannian metric that $\lambda_{\st} \sigma_{\st}^i<1$,  $2\le i\le r-j$.

We lose one more derivative in $x$ since $[2, r-i]$ is empty for $i=r-1$.

 Let us denote by  $((\phi_{\sa\mu})_\sa)_{\mu}$ the fixed point of $(\Phi_\mu)_\mu$. Then we consider the family of charts systems  $(\mathcal Y'(\mu))_\mu$, with $\mathcal Y'(\mu)=(y'_\sa)_{\sa\in \sY}$ given by:
\[y'_\sa\colon (x,y)\mapsto y_\sa(x, \phi_{\sa\mu}^{-1}(x,y)).\]
We have for every $\st=(\sa,\sb)\in \sT$:
\[y_\sb'^{-1}\circ f\circ y_{\sa}'(x,y)= (\sigma_\st x, \phi_{\sb\mu}^{-1}(\sigma_\st\cdot x,  g_{\st\mu}(x,\phi_{\sa\mu}(x,y)))= (\sigma_\st\cdot  x, \lambda_{\st}\cdot y).\]

 The $C^{r-2}$-smoothness with respect to the parameter is shown following the same argument as in Proposition \ref{semilinearization}.

\begin{proof}[Proof of Lemma \ref{Lemmadederive}] We recall that the system of charts is of type B. 
 For every $\st=(\sa,\sb)\in \sT$ and $\mu\in \M$, for every $x\in I^u_{\sa}\cap (I^u_{\sb}/\sigma_\st)$, we put:
\[\delta_{\st\mu}(x):= \frac{\partial_y g_{\st\mu}(x,0)}{\lambda_{\st\mu}},\quad \text{with }{\lambda_{\st\mu}:=\partial_y g_{\st\mu}(0,0)}.\]

We remark that the map $(x,\mu)\mapsto \delta_{\st\mu}(x)$ has its derivatives $\partial^i_x \partial_\mu^k$ which are continuous and bounded for all $i+k\le r-1$ with $k\not= r-1$.    

For ever $\sa\in \sY$, if $(\sa_i)_{-n\le i \le 0}$ is a maximal admissible chain with $\sa_0=\sa$ and $n\in \N\cup\{\infty\}$, the function is well defined:
\[\Delta_{\sa\mu}(x):= \prod_{i=-n+1}^{0} \delta_{\st_{i}\mu}\big(\frac{x}{\sigma_{\st_{i\mu }}\cdots \sigma_{\st_{0}\mu}}\big),\quad \text{with }\st_i:=  \sa_{i-1}\sa_{i}.\]

From property $(\mathcal M)$, the map $(x,\mu)\mapsto \Delta_{\sa \mu}(x)$ has its derivative $\partial^i_x \partial_\mu^k$ which are continuous and bounded for all $i+k\le r-1$ with $k\not= r-1$.

Then we consider the family of charts systems  $(\mathcal Y'(\mu))_\mu$, with $\mathcal Y'(\mu)=(y'_\sa)_{\sa\in \sY}$ given by:
\[y'_\sa\colon (x,y)\mapsto y_\sa(x, \Delta_{\sa\mu}(x)\cdot y).\]
We have for every $\st=(\sa,\sb)\in \sT$:
\[y_\sb'^{-1}\circ f_\mu\circ y_{\sa}'(x,y)= (\sigma_\st x, \Delta_{\sb\mu}(\sigma_{\st\mu}\cdot x)^{-1}\cdot g_{\st\mu}(x,\Delta_{\sa\mu}(x)\cdot y)).\]
We notice that it is here crucial for $\Delta_{\sa\mu}$ to be bounded away of zero in order that the new system of charts and its inverse have 
bounded and continuous derivatives $\partial^i_x\partial^j_y \partial_\mu^k$, for all $i+j+k\le r-1$  with $k\not= r-1$.
Here we used that $f_\mu$ is Hénon like.

For $y=0$, it holds with $\lambda_\st := \partial_y g_{\st \mu}(0,0)=D_0h_{\st \mu}$: 
\[\partial_y (\Delta_{\sb\mu}(\sigma_{\st \mu} x)^{-1}\cdot g_{\st\mu}(x,\Delta_{\sa\mu}(x)\cdot y))|\{y=0\}
= \partial_y g_{\st\mu}(x,0)\cdot \frac{\Delta_{\sa\mu}(x)}{\Delta_{\sb\mu}(\sigma_{\st \mu} x)}=\frac{\partial_y g_{\st\mu}(x,0)}{\delta_{\st\mu}(x)}=\lambda_{\st\mu}.\]
\end{proof}
\section{Computational proof}
\subsection{Proof of Proposition \ref{PTestimate}}
\label{proofofpropPTestimate}
We recall that $DC_\mu\circ L^n_\mu(x,y)= (\sigma_\mu^{2n} x, \lambda_\mu^n \sigma_\mu^{2n} y)$ and
\[(0, (b-b'\lambda^n_\mu \sigma^n_\mu )x)+ DC_\mu\circ L^n_\mu \circ E'_\mu\circ C^{-1}_\mu=:(A_\mu,b\cdot B_\mu).\]

{\bf $C^1$ norm.}
To evaluate the $C^1$-norm of $A_0$ and $B_0$, it is sufficient to show that their values  at $(0,-\sigma^{2n}r)$ are small since we will show that their second derivatives are small.

Observe that $C_\mu^{-1}(0,-\sigma_\mu^{2n}r)=P$. As $E'_0(P)$ and $DA'_0(P)=0$ and $\partial_x B_0'(P)=0$ by $(\mathcal E_2)$ and $(\mathcal E_3)$, it holds:
\[ (A_0, b\cdot B_0)(0,-\sigma_0^{2n}r)=0,\quad  DA_0(0,-\sigma_0^{2n}r)=0,\quad D A_0(0,-\sigma^{2n} r)=0,\quad \partial_x B_0(0,-\sigma^{2n} r)=0\]
Also $\partial_y  B_0 = \lambda_0^{n} (\partial_y B_0')\circ C_0^{-1}$ is small for $n$ large.
 
{\bf $C^2$ norm.}
Also we recall  that $DC_\mu\circ L^n_\mu$ is linear, thus it is equal to its derivative, and its second derivative is 0.  The norm of $DC^{-1}_\mu$ is less than $\sigma^{-n}_\mu$, and so:
\[D^2 A_0= \sigma^{2n}_\mu D^2 A'_\mu(DC_\mu^{-1},DC_\mu^{-1})\quad \text{and}\quad  D^2 B_\mu= \sigma^n_\mu D^2 B'_\mu(DC_\mu^{-1},DC_\mu^{-1})\]
$D^2 B_\mu$ is dominated by $\sigma_\mu^{-n}$ which is small.

The same holds for $\partial_x\partial_y A_\mu$ and $\partial_{y}^2 A_\mu$. On the other hand, by $(\mathcal E_3)$, $\partial_x^2 A_\mu(0, -\sigma_\mu^{2n} r)=0$,  
thus $\partial_x^2 A_\mu(P')=0$ and so $\partial_x^2 A_\mu(z)$ is small when $C^{-1}_\mu(z)$ is close to $P$, which is the case whenever $\mu=M_n(a)$, and $(z,a)$ are in a compact set, while $n$ is large. 

{\bf $C^r$ norm for $r\ge 3$.} Similarly, we have:
\[D^r A_\mu= \sigma^{2n}_\mu D^r A'_\mu(DC_\mu^{-1})^r\quad \text{and}\quad  D^r B_\mu= \sigma^n_\mu D^r B'_\mu(DC_\mu^{-1})^r\]
Hence $D^r B_\mu$ is dominated by $\sigma_\mu^{(r-1)n}$ and $D^r A_\mu$ is dominated by $\sigma_\mu^{(r-2)n}$ which are both small.

{\bf Parameter dependence.} We compute:
\[\partial_a A_a= \partial_a \sigma_\mu^{2n} A'_\mu\circ C_\mu^{-1} +\sigma_\mu^{2n} \partial_a (A'_\mu\circ C_\mu^{-1})\]
\[\partial_a B_a= \frac{ \partial_a (\lambda_\mu^n\cdot \sigma_\mu^{2n}) B'_\mu\circ C_\mu^{-1} +\lambda_\mu^n\cdot \sigma_\mu^{2n} \partial_a (B'_\mu\circ C_\mu^{-1})}{\lambda_0^n\cdot \sigma_0^{n}}
+\partial_ a \left[\left(\left(1- \frac{\sigma_\mu \lambda_\mu}{\sigma_0 \lambda_0}\right)^n\right)x\right]
\]
 Observe that the derivative of $\mu$ w.r.t $a$ is $\xi^{-1} \sigma_\mu^{-2n}$, and that its second derivative is 0. Thus 
\begin{equation}\label{dA}\partial_a A_a=\xi^{-1} \sigma_\mu^{-2n} (\partial_\mu \sigma_\mu^{2n}) A'_\mu \circ C_\mu^{-1} +\xi^{-1} \partial_\mu (A'_\mu )\circ C_\mu^{-1}+\xi^{-1} DA'_\mu \circ \partial_\mu (C_\mu ^{-1})\end{equation}
\begin{equation}\label{dB}
\partial_a B_a=
  \frac{\partial_\mu  (\sigma^n_\mu \lambda^n_\mu)}{\sigma^n_0 \lambda^n_0 \sigma^{2n}_\mu \xi }x +
\frac{ \partial_\mu (\lambda_\mu^n\cdot \sigma_\mu^{2n})}{\xi \sigma_\mu^{2n} \lambda_0^n\cdot \sigma_0^{n}} B'_\mu\circ C_\mu^{-1}
 +\xi^{-1} \frac{\lambda_\mu^n}{\lambda_0^n\cdot \sigma_0^{n}}\big(\partial_\mu (B'_\mu)\circ C_\mu^{-1}+DB'_\mu\circ \partial_\mu (C_\mu^{-1})\big)\end{equation}

\noindent\underline{Bounds for (\ref{dA})}.

Note that the following is dominated by $n$:
\[\sigma_\mu^{-2n} (\partial_\mu \sigma_\mu^{2n})=\partial_\mu \log(\sigma_\mu^{2n})= 2n \partial_\mu \log(\sigma_\mu)\] 
The same holds for higher derivatives with respect to $\mu$. 

We remark also that $(x,y,\mu)\mapsto C_\mu^{-1}(x,y)= (\sigma_\mu^{-n}x,\sigma_\mu^{-2n} y)$ is $C^{r}$-exponentially small for $n$ large: 
its Hessian is 0, its derivatives with respect to $\mu$ is of the order of $n\sigma^{-n}_\mu$.

For $a$ in a compact set, the image by $C_\mu^{-1}$ of any ball is an exponentially small neighborhood of $P$, also $\mu$ is exponentially small when $n$ is large. Thus, by $(\mathcal E_2)$ and $(\mathcal E_3)$, the $C^0$-norm of $A'_\mu\circ C_\mu^{-1}$ and $\partial_\mu A'_\mu\circ C_\mu^{-1}$ restricted to any compact subset is exponentially small when $n$ is large.  Consequently, the $C^0$-norm of $(x,y)\mapsto \partial_a A_a(x,y)$ is small. Also $DC_\mu^{-1}$ and $D\partial_a C_\mu^{-1}$ are exponentially small, the $C^1$-norm of $\partial_a A_a$ is exponentially small. Moreover since the higher derivatives of $A'_\mu$ are bounded, it comes as in the study without parameter dependence that the $C^{r-1}$-norm of $(x,y)\mapsto \partial_a A_a(x,y)$ is exponentially small. Similarly,  
for every $k< r$, the $C^{r-k-1}$-norm of $\partial_\mu^{k-1} \partial_a A_a$ is bounded. Therefore, the norm  $C^{r-k}$-norm of $\partial_a^k A_a$ is small.

\noindent\underline{Bounds for (\ref{dB})}.
First note that, since $|\lambda_\mu /(\sigma_\mu\cdot \lambda_0\cdot \sigma_0)|$ is smaller than one, the following is exponentially small:
\[\frac{ \partial_\mu (\lambda_\mu^n\cdot \sigma_\mu^{2n})}{\sigma_\mu^{2n}\lambda_0^n\cdot \sigma_0^{n}}=
\sigma_\mu^{-2n}\frac{ \partial_\mu (\lambda_\mu^n\cdot \sigma_\mu^{2n})}{\lambda_\mu^n\cdot \sigma_\mu^{n}}\frac{\lambda_\mu^n\cdot \sigma_\mu^{n}}{\lambda_0^n\cdot \sigma_0^{n}}=2n \left(\sigma_\mu^{-2}  \frac{\lambda_\mu \cdot \sigma_\mu}{\lambda_0\cdot \sigma_0}\right)^n\partial_\mu \log (\lambda_\mu\cdot \sigma_\mu^{2})\]
The same occurs for higher derivatives with respect to $\mu$ (and so w.r.t. $a$). 

Likewise $ \mu \mapsto \frac{\partial_\mu  (\sigma^n_\mu \lambda^n_\mu)}{\sigma^n_0 \lambda^n_0 \sigma^{2n}_\mu \xi }$ and $\mu\mapsto \frac{\lambda_\mu^n}{\lambda_0^n \sigma_0^n}$ are $C^{r-1}$-exponentially small for $n$ large. 

Hence,  using $(\mathcal E_2)$ as above, the $C^0$-norm of $A'_\mu\circ C_\mu^{-1}$ restricted to any compact subset is exponentially small when $n$ is large. 
Furthermore, since the $C^r$-norm of $(x,y,a)\mapsto B_a (x,y)$ is bounded, it comes that the $C^r$-norm of $(x,y,\mu)\mapsto B'_\mu (x,y)$ is $C^r$-small.

\bibliographystyle{alpha}
\bibliography{references}
\end{document}